\numberwithin{equation}{section}
\newtheorem{theorem}{Theorem}[section]
\newtheorem{proposition}[theorem]{Proposition}
\newtheorem{lemma}[theorem]{Lemma}
\newtheorem{remark}[theorem]{Remark}
\newcommand{\ud}{\mathrm{d}}
\renewcommand\theequation{\thesection.\arabic{equation}}
\numberwithin{equation}{section}
\begin{document}

\begin{center}
\textbf{Normalized solutions for nonlinear
Schr\"{o}dinger equation involving potential and Sobolev critical exponent}\\
\end{center}

\begin{center}
 Zhen-Feng Jin$^{1}$, Weimin Zhang$^{2, *}$\\

$^{1}$ School of Mathematics and Computer Science, Shanxi Normal University,\\ Taiyuan, 030031, P.R. China\\
$^{2}$ School of Mathematical Sciences,  Key Laboratory of Mathematics and Engineering Applications (Ministry of Education) \& Shanghai Key Laboratory of PMMP,  East China Normal University, Shanghai, 200241, P.R. China
\end{center}

\begin{center}
\renewcommand{\theequation}{\arabic{section}.\arabic{equation}}
\numberwithin{equation}{section}
\footnote[0]{\hspace*{-7.4mm}
$^*$Corresponding author.\\
AMS Subject Classification: 35Q55, 35J60, 35J20.\\
{E-mail addresses: jinzhenfeng@sxnu.edu.cn (Z.-F. Jin), zhangweimin2021@gmail.com (W. Zhang).}}
\end{center}
%\address[W.~M.~Zhang]{\newline\indent School of Mathematical Sciences
%\newline\indent
%East China Normal University
%\newline\indent
%Shanghai 200241, P.R. China}
%\email{\href{mailto:52205500026@stu.ecnu.edu.cn}{52205500026@stu.ecnu.edu.cn}}
%\address[H.~R.~Sun]{\newline\indent School of Mathematics and Statistics
%\newline\indent
%Lanzhou University,
%\newline\indent Lanzhou 730000, P.R. China}
%\email{\href{mailto:hrsun@lzu.edu.cn}{hrsun@lzu.edu.cn}}
%\address[J.~Zhang]{\newline\indent College of Mathematics and Statistics
%\newline\indent
%Chongqing Jiaotong University
%\newline\indent
%Chongqing 400074, P.R. China}
%\email{\href{mailto:zhangjianjun09@tsinghua.org.cn}{zhangjianjun09@tsinghua.org.cn}}
%\address[Z.~F.~Jin]{\newline\indent School of Mathematics and Computer Science
%\newline\indent
%Shanxi Normal University,
%\newline\indent Taiyuan, Shanxi 030031, China}
%\email{\href{mailto:jinzhf15@lzu.edu.cn}{jinzhf15@lzu.edu.cn}}
%\thanks{(1) Corresponding author: \texttt{hrsun@lzu.edu.cn}}
%\thanks{(2) H.~R.~Sun was partly supported by the NSFC (Grants No. 11671181) and NSF of Gansu Province of China (Grants No. 21JR7RA535)}
%\thanks{(3) J.~J.~Zhang was supported by NSFC(No.11871123)}
%\subjclass[2010]{35A15, 35J60, 58E05}
\begin{abstract}
In this paper, we consider the existence of positive solutions with prescribed $L^2$-norm for the following nonlinear Schr\"{o}dinger equation involving potential and Sobolev critical exponent
\begin{equation*}
\begin{cases}
-\Delta u+V(x)u=\lambda u+\mu |u|^{p-2}u+|u|^{\frac{4}{N-2}}u \;\;\text { in } \mathbb{R}^N, \\
\|u\|_2=a>0,\\
\end{cases}
\end{equation*}
where $N\ge 3$, $\mu>0$, $p\in [2+\frac{4}{N}, \frac{2N}{N-2})$ and $V\in C^1(\mathbb{R}^N)$. Under different assumptions on $V$, we derive two different Pohozaev identities. Based on these two cases, we respectively obtain the existence of positive solution. As far as we are aware, we did not find any works on normalized solutions with Sobolev critical growth and potential $V \not\equiv 0$. Our results extend some results of Wei and Wu [J. Funct. Anal. 283(2022)] to the potential case.
\end{abstract}
\textbf{Keywords:} Normalized solutions, Potential, Mass supercritical and Sobolev critical growth.

%\maketitle

\section{Introduction}
\noindent

This paper is concerned with the existence of positive solutions with prescribed $L^2$-norm for the following nonlinear Schr\"{o}dinger equation
\begin{equation}\label{041003}
\begin{cases}
-\Delta u+V(x)u=\lambda u+\mu |u|^{p-2}u+|u|^{2^{*}-2}u \;\;\text { in } \mathbb{R}^N, \\
\|u\|_2=a>0,\\
\end{cases}
\end{equation}
where $N\ge 3$, $\mu>0$, $p\in [2+\frac{4}{N},2^{*})$ with
$2^{*}:=\frac{2N}{N-2}$, and
$$\|u\|^2_2=\int_{\mathbb{R}^N} u^2 \ud x.$$
 The parameter $\lambda\in\mathbb{R}$ arises as a Lagrange multiplier with respect to the mass constraint $\|u\|_{2}=a$. Here, $V : \mathbb{R}^N\to \mathbb{R}$ is a potential function.\par
 Problem \eqref{041003} comes from the study of standing waves for the nonlinear Schr\"{o}dinger equation
\begin{equation}\label{2207181059}
    iw_t-\Delta w + V(x)w=f(w) \;\; \text{in}~ \mathbb{R}^N\times (0,\infty),
\end{equation}
where $w$ has the form
\begin{equation}\label{2207181100}
w(x,t)=e^{-i\lambda t} u(x),\;\; (x,t)\in \mathbb{R}^N\times (0,\infty),
\end{equation}
and $u$ is a real function. For problem \eqref{2207181059}-\eqref{2207181100}, the $L^2$-norm of $u$ stands for the mass of a particle. If $f(w)=e^{-i\lambda t}f(u)$, \eqref{2207181059}-\eqref{2207181100} will be reduced to
\begin{equation}\label{22072316}
 -\Delta u+V(x)u=\lambda u+f(u) \;\;\text { in } \mathbb{R}^N.
\end{equation}
Such problem for given $\lambda$ is called the {\it fixed frequency problem}, which has been widely studied for the decades. A huge literature exists, we will not intend to summarize here, the interested readers can refer to \cite{Ambrosetti2006, Cerami2006, Struwe2008} and references therein. Here we are concerned with the case of  prescribed mass, that is, for some fixed $a>0$, we try to find solutions of \eqref{22072316} on the manifold
\begin{equation}\label{2303012154}
S_a:=\{u\in H^1(\mathbb{R}^N):\|u\|_{2}=a\},
\end{equation}
with $\lambda\in\mathbb{R}$. Commonly, such solutions are called normalized solutions. A natural approach is applying variational method to \eqref{22072316}-\eqref{2303012154}. One can derive solutions to \eqref{22072316}-\eqref{2303012154} by looking for critical points of the associated energy functional on $S_a$,
\begin{equation}\label{2303021513}
\begin{aligned}
E_{V}(u)=\frac{1}{2}\|\nabla u\|_2^2 +\frac12 \int_{\mathbb{R}^N}V(x)u^2\ud x-\int_{\mathbb{R}^N}F(u) \ud x
\end{aligned}
\end{equation}
where
$$F(u) =\int_0^{u}f(t)\ud t.$$
\subsection{Non-potential case $V\equiv 0$.} The simplest example is the power nonlinearities $f(u)=|u|^{p-2}u$. Consider
\begin{equation}\label{22071813}
    -\Delta u+u= |u|^{p-2}u~\text { in } \mathbb{R}^N, \;\; u\in H^1(\mathbb{R}^N).
\end{equation}
It is well-known that for $p\in (2,2^*)$, \eqref{22071813} has a unique positive solution up to a translation, which can be chosen to be radial (see \cite{Kong1989}). If we denote the radial solution by $U_p$, one can check that
\[
U_{\lambda, p}(x):=\lambda^{\frac{1}{p-2}}U_p({\sqrt{\lambda}}x),\;\; \lambda>0,
\]
is the unique positive solution (up to a translation) to
\begin{equation}\label{2301112026}
-\Delta u+\lambda u= |u|^{p-2}u\;\;\text { in } \mathbb{R}^N, \;\; u\in H^1(\mathbb{R}^N).
\end{equation}
A direct computation shows that
\[
\|U_{\lambda, p}\|_2^2=\lambda^{\frac{4-(p-2)N}{2(p-2)}}\|U_p\|_2^2.
\]
If $p\neq \hat{p}:=2+\frac{4}{N}$, for any $a>0$, there exists a unique $\lambda>0$ such that $\|U_{\lambda, p}\|_2=a$, which means that \eqref{2301112026}
has a unique positive radial normalized solution $u\in S_a$ for any $p\in (2, \hat{p})\cup (\hat{p}, 2^*)$.
While for $p=\hat{p}$, the equation \eqref{2301112026} has a positive normalized solution $u\in S_a$ if and only if $a=\|U_p\|_2$. Therefore, $\hat{p}=2+\frac{4}{N}$ is called the mass critical exponent or $L^2$-critical exponent to \eqref{2301112026}.

Moreover, the mass critical exponent plays an important role in the geometry of energy functional. In order to preserve the $L^2$-norm, usually we use the scaling
\begin{equation}\label{2301111617}
 u_h(x):=h^{\frac{N}{2}}u(h x),\;\; h>0,
\end{equation}
which gives readily
\[
\|\nabla u_h\|_2^2=h^2\|\nabla u\|_2^2, \;\;\| u_h\|_p^p=h^{\frac{N(p-2)}{2}}\|u\|_p^p.
\]
Remark that to have $h^2=h^{\frac{N(p-2)}{2}}$ for $h\neq 1$, one needs $p=\hat{p}$. Thus, for fixed $u\in S_a$, the sign of $p-\hat{p}$ decides the shape of $h\mapsto E_0(u_h)$, seeing the Gagliardo-Nirenberg inequality (see \cite{N1959}): {\it For any $N\ge 3$ and $p\in [2,2^*]$, there exists a constant $C_{N,\, p}$ depending on $N$, $p$ such that
\begin{equation}\label{22071819}
\|u\|_p\le C_{N,\, p}\|u\|_2^{1-\gamma_p}\|\nabla u\|_2^{\gamma_p},\;\; \forall~ u\in H^{1}(\mathbb{R}^N)
\end{equation}
where $\gamma_p=\frac{N(p-2)}{2p}$. The inequality holds true also in $N\le 2$ for any $2\le p<\infty$.}\par
We can observe that
\[
p\gamma_p
\begin{cases}
<2\;\; \text{if}~2<p<\hat{p}\\
=2\;\; \text{if}~p=\hat{p}\\
>2\;\; \text{if}~\hat{p}<p<2^*,\\
\end{cases}\;\; \text{and}\;\; \gamma_{2^*}=1.
\]
The sign of $p-\hat{p}$ decides whether the functional $E_0$ is bounded from below on $S_a$.
For $p=2^*$, we have a particular case of \eqref{22071819}, the Sobolev inequality
\begin{equation}\label{2301112226}
\mathcal{S}\|u\|_{2^*}^2\le \|\nabla u\|_2^2, \;\; \forall~ u\in D^{1, 2}(\mathbb{R}^N),~N\ge 3.
\end{equation}
The sharp constant $\mathcal{S}$ for \eqref{2301112226} is called the best Sobolev constant. It is well known from \cite{Talenti1976} that
\[
\widetilde{U}_{\varepsilon, y}(x)=[N(N-2)]^{\frac{N-2}{4}}\left( \frac{\varepsilon}{\varepsilon^2+|x-y|^2} \right)^{\frac{N-2}{2}},\;\; \varepsilon>0, \,y\in \mathbb{R}^N,
\]
are the unique functions to achieve \eqref{2301112226}, which are also the only classical solutions (see \cite{CGS1989}) to
\[
-\Delta u= u^{2^*-1},\;\; u>0\;\; \text{in}~\mathbb{R}^N.
\]
Clearly, $\widetilde{U}_{\varepsilon, y}\in L^2(\mathbb{R}^N)$ if and only if $N\ge 5$. Thus, for the critical case $p=2^*$, \eqref{2301112026} admits a positive radial normalized solution $\widetilde{U}_{\varepsilon, 0}\in S_a$ with $\lambda=0$ and a  unique choice of $\varepsilon>0$ if $N\ge5$, and no positive solution exists any more if $N\le 4$. \par
For nonhomogeneous nonlinearities, the scaling method does not work. Jeanjean  \cite{Jeanjean97} did a seminal work and considered a class of mass supercritical and Sobolev subcritical problem where $f(u)$ can be chosen as $\sum_{1\leq j \leq k}a_j|u|^{\sigma_j-2}u$ with $k\ge 1, a_j>0$ and
\[
\hat{p}<\sigma_j<2^* \;\;\text{if}~N\ge 3;\;\;\hat{p}<\sigma_j \;\;\text{if}~N=1, 2.
\]
He showed that the energy functional $E_0$ possesses a mountain pass geometry on $S_a$. A crucial step in \cite{Jeanjean97} is to construct an augmented functional
$$\widetilde{E_0}(u, h):=E_0(u_h), \;\; (u, h)\in S_a\times \mathbb{R}^+,$$
in order to obtain a Palais-Smale sequence approaching the Pohozaev manifold
\[
\mathcal{P}_0:=\left\{u\in S_a: \frac{\ud}{\ud h}E_0(u_h)\Big|_{h=1}=0 \right\}.
\]
For more references on normalized solution problem with Sobolev subcritical growth, we refer to \cite{Soave20_JDE, JL2020} and references therein. A lot of works have been done following Jeanjean's method, see for instance  \cite{BMRV2021, MRV2022, JL2020, Soave20_JFA, Soave20_JDE}. Here we are mainly concerned with the Sobolev critical growth problem.

Recently, Soave \cite{Soave20_JFA} considered the combined nonlinearities involving Sobolev critical exponent
\begin{equation}\label{2207191018}
\begin{cases}
-\Delta u=\lambda u+ \mu|u|^{p-2}u+ |u|^{2^*-2}u \;\;\text { in } \mathbb{R}^N, \\
\|u\|_{2}=a>0,\\
\end{cases}
\end{equation}
where $N\ge 3$ and $\mu\in \mathbb{R}$. The corresponding energy functional to \eqref{2207191018} is
\begin{equation*}
\begin{aligned}
J_{\mu}(u):=\frac{1}{2}\|\nabla u\|_2^2
-\frac{\mu}{p}\|u\|_p^{p}
-\frac{1}{2^{*}}\|u\|_{2^*}^{2^{*}},\;\; u\in S_a.
\end{aligned}
\end{equation*}
Recall that a ground state of \eqref{2207191018} is a solution having the minimal energy among all solutions. The ground state of \eqref{2207191018} can be obtained by finding the minimizers of $J_{\mu}$ on the
Pohozaev manifold
\[
\mathcal{P}_{\mu}:=\{u\in S_a: \|\nabla u\|_2^2-\mu \gamma_p\|u\|_p^p-\|u\|_{2^*}^{2^*}=0 \}.
\]
Let
\begin{equation}\label{22071824}
m_{a}:= \underset{u\in \mathcal{P}_{\mu}}{\inf}{J_{\mu}(u)}.
\end{equation}
Soave investigated the existence of ground state solutions to \eqref{2207191018} for different exponent $p$.
%Actually, $J_{\mu}$ possesses a mountain pass geometry due to $p>\hat{p}$. Indeed, for $k>0$, by setting
%\begin{equation}\label{22072123}
%A_k:=\{u\in S_a: \|\nabla u\|_2< k\},\;\;\text{and} \;\; \partial A_k:=\{u\in S_a: \|\nabla u\|_2= k\},
%\end{equation}
%$J_{\mu}$ is positive on $A_k$ when $k$ is small and $J_{\mu}$ is bounded from below by some positive constant on $\partial A_{k}$. For any $u\in S_a$, one has $\underset{s\to -\infty}{\lim}J_{\mu}(s\star u)=0$ and  $\underset{s\to \infty}{\lim}J_{\mu}(s\star u)=-\infty$. We denote the mountain pass level by
He proved that for $p\in (2, 2^*)$, \eqref{22071824} can be attained provided $N\ge 3$, $\mu$, $a>0$ and
\begin{equation}\label{2312232020}
 \mu a^{{p(1-\gamma_p)}}<\alpha_{N,\, p}
\end{equation}
for some $\alpha_{N,\, p}>0$, see \cite[Theorem 1.1]{Soave20_JFA}. Also, he proposed some natural questions, including
\begin{itemize}
\item[($Q_1$)]: Does $J_{\mu}|_{S_a}$ have a critical point of mountain-pass type in the case $p\in (2, \hat{p})$?
\item[($Q_2$)]: Does $J_{\mu}|_{S_a}$ have a ground state if $\mu>0$ and $ \mu a^{{p(1-\gamma_p)}}$ is large?
\end{itemize}
Jeanjean and Le \cite{JL2022} and Wei and Wu \cite{WW22} answered question $(Q_1)$ for $N\ge 4$ and $N=3$ respectively. Under the condition \eqref{2312232020}, the authors in \cite{JL2022, WW22} showed that $J_{\mu}$ has a mountain pass geometry around the ground state, and the mountain pass level $M_a$ can be upper bounded by
\begin{equation}\label{2302151613}
M_a<m_a+\frac{1}{N}\mathcal{S}^{\frac{N}{2}}.
\end{equation}
It was verified in \cite[Proposition 1.11]{JL2022} that for $N\ge 3$, if the mountain pass level $M_a$ satisfies \eqref{2302151613}, then any radial Palais-Smale sequence with level $M_a$ approaching the Pohozaev manifold $P_{\mu}$ is relatively compact in $H^1(\mathbb{R}^N)$. Moreover, Wei and Wu also answered $(Q_2)$ in \cite{WW22} for $p\in [\hat{p}, 2^*)$. Now we summarize the existence results of \cite{Soave20_JFA, WW22} for \eqref{2207191018} with $p\in [\hat{p}, 2^*)$:
\begin{theorem}\label{2207231547}
Let $N\ge 3$, $\hat{p}\le p<2^*$ and let $\mu, a>0$.
\begin{itemize}
\item[$(i)$] If $p=\hat{p}$, then for
$$\mu a^{\frac4N}< \alpha:= \frac{\hat{p}}{2(C_{N,\hat{p}})^{\hat{p}}},$$
$m_a$ can be attained by some $u$ which is positive, radially symmetric, and a solution to \eqref{2207191018} for some $\lambda<0$. Here $C_{N, \hat{p}}$ is the best constant for Gagliardo-Nirenberg inequality given by \eqref{22071819}. Moreover, $m_a$ can not be attained if $\mu a^{\frac4N}\ge \alpha$.
\item[$(ii)$] If $\hat{p}<p<2^*$, then for any $\mu, a>0$, $m_{a}$ is attained by some $u$ which is positive, radially symmetric, and a solution to \eqref{2207191018} with $\lambda<0$.
\end{itemize}
\end{theorem}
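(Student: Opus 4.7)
My approach mirrors the fibering-plus-Pohozaev strategy: realize $m_a$ as the infimum of $J_\mu$ on the Pohozaev manifold $\mathcal{P}_\mu$ and achieve it via a symmetrized minimizing sequence. For $u \in S_a$ the scaling $u_h(x)=h^{N/2}u(hx)$ gives
\[
\psi_u(h) := J_\mu(u_h) = \frac{h^2}{2}\|\nabla u\|_2^2 - \frac{\mu h^{p\gamma_p}}{p}\|u\|_p^p - \frac{h^{2^*}}{2^*}\|u\|_{2^*}^{2^*}.
\]
For $p \in (\hat p, 2^*)$ the exponents satisfy $2 < p\gamma_p < 2^*$, so $\psi_u$ has a unique strict maximum at some $h_u > 0$ giving a projection $u \mapsto u_{h_u} \in \mathcal{P}_\mu$. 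For $p = \hat p$ one has $p\gamma_p = 2$, so $\psi_u(h)$ reduces to $\frac{h^2}{2}A(u) - \frac{h^{2^*}}{2^*}\|u\|_{2^*}^{2^*}$ with $A(u) := \|\nabla u\|_2^2 - \frac{2\mu}{\hat p}\|u\|_{\hat p}^{\hat p}$; the Gagliardo--Nirenberg inequality together with $\mu a^{4/N} < \alpha$ forces $A(u)>0$ on all of $S_a$, again producing a unique maximum. Substituting the Pohozaev identity into $J_\mu$ shows that on $\mathcal{P}_\mu$
\[
J_\mu(u) = \mu\Big(\frac{\gamma_p}{2}-\frac{1}{p}\Big)\|u\|_p^p + \frac{1}{N}\|u\|_{2^*}^{2^*} \ge 0,
\]
and combining Pohozaev with Sobolev yields a positive lower bound on $\|\nabla u\|_2$ over $\mathcal{P}_\mu$, so $m_a>0$.

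I would then pick a minimizing sequence $\{u_n\} \subset \mathcal{P}_\mu$, replace it by its Schwarz symmetrization and reproject onto $\mathcal{P}_\mu$, to obtain a radial non-negative minimizing sequence bounded in $H^1$. The central ingredient is the strict inequality
\[
m_a < \frac{1}{N}\mathcal{S}^{N/2},
\]
needed to exclude a Sobolev bubble splitting off. I would establish it by testing with the Aubin--Talenti bubble $\widetilde{U}_{\varepsilon,0}$ suitably truncated and renormalized onto $S_a\cap \mathcal{P}_\mu$ and expanding in $\varepsilon$; the strictly negative subcritical contribution (since $\mu>0$ and $p\ge\hat p$) outweighs the error terms and delivers the bound. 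With this in hand, a Brezis--Lieb decomposition applied to the critical term, combined with the Pohozaev constraint, excludes vanishing and concentration and yields strong convergence $u_n \to u$ in $H^1$. The limit achieves $m_a$; the Lagrange multiplier rule on $S_a$ together with the Pohozaev identity tested against $u$ shows that the associated multiplier $\lambda$ is strictly negative, and the strong maximum principle upgrades $u\ge 0$ to $u>0$.

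For the non-attainment clause in (i) when $\mu a^{4/N} \ge \alpha$, I would argue via the identity $J_\mu|_{\mathcal{P}_\mu}(u)=\frac{1}{N}\|u\|_{2^*}^{2^*}>0$: by using test functions close to Gagliardo--Nirenberg extremizers (for which $A(u)\to 0^+$) one sees that the $\mathcal{P}_\mu$-projections have $\|u_{h_u}\|_{2^*}\to 0$, so $m_a=0$; since the strictly positive functional cannot reach $0$, no minimizer exists. The main obstacle I foresee is the strict energy bound $m_a<\frac{1}{N}\mathcal{S}^{N/2}$: the asymptotic expansion of $\|\widetilde{U}_{\varepsilon,0}\|_p^p$ as $\varepsilon\to 0$ changes character with both $N$ and $p$, and $N=3$ is genuinely borderline -- this is precisely the delicacy that required Wei and Wu's refined mountain-pass argument in answering $(Q_1)$. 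Everything else is comparatively standard once this bound is secured.
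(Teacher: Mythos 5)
The paper does not prove Theorem \ref{2207231547}: it is stated as a verbatim summary of results from Soave \cite{Soave20_JFA} and Wei--Wu \cite{WW22}, so there is no internal proof to compare against. Your outline — minimizing $J_\mu$ on the Pohozaev manifold $\mathcal{P}_\mu$, the dichotomy governed by $A(u)=\|\nabla u\|_2^2-\tfrac{2\mu}{\hat p}\|u\|_{\hat p}^{\hat p}$ at $p=\hat p$, the strict bound $m_a<\tfrac1N\mathcal{S}^{N/2}$ via truncated Aubin--Talenti bubbles, compactness through Brezis--Lieb plus the Pohozaev constraint, and non-attainment via $m_a=0$ when $\mu a^{4/N}\ge\alpha$ — is precisely the strategy of those references, and the identities you do display (namely $J_\mu|_{\mathcal{P}_\mu}(u)=\mu(\tfrac{\gamma_p}{2}-\tfrac1p)\|u\|_p^p+\tfrac1N\|u\|_{2^*}^{2^*}$ and $\lambda=-\mu(1-\gamma_p)\|u\|_p^p/a^2<0$) are correct.
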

\begin{remark}\label{2302021620}
When $p=\hat{p}$, we can get the precise value of $\alpha$ owing to \cite[(5.1)]{Soave20_JFA} and \cite[(3.13)]{WW22}.
\end{remark}

\subsection{Potential case $V\not\equiv 0$.}
When $V \not\equiv 0$, many works were done with variational method to find solutions of \eqref{22072316}-\eqref{2303012154}.
\subsubsection{Mass-subcritical growth}
When $f$ has mass-subcritical growth, for example, $f(u)=\sum_{1\leq j\leq k}a_j|u|^{\sigma_j-2}u$ with $k\ge 1, a_j>0$, $N\geq 1$ and
\[
2<\sigma_j<\hat{p},
\]
the associated energy functional $E_V$ (see \eqref{2303021513}) is bounded from below on $S_a$. In this case, to get a minimizer, the main difficulty is to prove that the minimum level is sub-additive with respect to the mass. For example:

\begin{itemize}
\item In \cite{IM2020}, Ikoma and Miyamoto assumed that
\[
V\in C(\mathbb{R}^N),\;\; \underset{|x|\to \infty}{\lim}V(x)=\underset{x\in \mathbb{R}^N}{\sup}V(x)=0;
\]
if $N\ge 5$, the following additional assumption is imposed
\[
 V\in C^{0, 1}(\mathbb{R}^N),\;\; \nabla V(x)\cdot x\le \frac{(N-2)^2}{2|x|^2} ~\text{for}~a.e.~x\in \mathbb{R}^N\backslash \{0\}.
\]
They applied the concentration-compactness arguments of Lions \cite{Lions84-1, Lions84-2}.
\item Zhong and Zou \cite{ZZ2021} considered $V\in C(\mathbb{R}^N)$ satisfying
\[
\underset{|x|\to \infty}{\lim}V(x)=\underset{x\in \mathbb{R}^N}{\sup}V(x)\in (0, \infty],\;\; V(0)=\underset{x\in \mathbb{R}^N}{\min}V(x).
\]
They presented a new approach based on iteration to obtain the strictly sub-additive inequality.
\item Alves and Ji \cite{CJ2022} considered $f(u)=|u|^{p-2}u$ with $p\in (2, \hat{p})$, and a positive, ${\mathbb Z}^N$-periodic (or asymptotically periodic) potential $V$. In their proof, ${\mathbb Z}^N$-periodicity prevents the vanishing of the minimizing sequence at infinity.
\end{itemize}

\subsubsection{Mass-supercritical and Sobolev subcritical growth} When $f$ has mass-supercritical and Sobolev subcritical growth, for example $f(u)=\sum_{1\leq j \leq k}a_j|u|^{\sigma_j-2}u$ with $k\ge 1, a_j>0$ and
\[
\hat{p}<\sigma_j<2^* \;\;\text{if}~N\ge 3;\;\;\hat{p}<\sigma_j\;\;\text{if}~N=1, 2,
\]
the energy functional is unbounded from below on $S_a$. We often use the mountain pass geometry to handle the existence issue of solutions to \eqref{22072316}-\eqref{2303012154}.

\begin{itemize}
\item Molle {\it et al.} \cite{MRV2022} investigated the case $f(u)=|u|^{p-2}u$ with $p\in (\hat{p}, 2^*)$. They constructed a splitting lemma to obtain the compactness of Palais-Smale sequence. If moreover
$$\max\{\|V\|_{N/2}, \left\|V(x)|x|\right\|_N\}<L,\;\; V(x)\le 0,$$
for some $L=L(N, p)>0$, they derived a mountain-pass solution at a positive level.
\item Bartsch {\it et al.} \cite{BMRV2021} discussed the non-trapping potential case, with
\[
V(x)\ge \underset{|x|\to +\infty}{\lim\inf}\,V(x)>-\infty,
\]
where the mountain pass structure by Jeanjean is destroyed. They constructed a linking geometry developed by \cite{BC1990}, and using a minimax argument, they obtained the existence of solutions with high Morse index.
\item In \cite{DZ2022}, Ding and Zhong treated the case for more general $f$ with mass super-critical and Sobolev subcritical growth, satisfying some Ambrosetti-Rabinowitz type condition. They assumed that $V$ is negative, twice differentiable a.e. in $\mathbb{R}^N$ and required some compactness conditions or Poincar\'e inequality on $V$, which yields that the Pohozaev manifold is a natural constraint. By considering a minimizing sequence on the Pohozaev manifold, they derived existence results.
\item The Lyapunov-Schmidt reduction approach also has been applied to problem \eqref{22072316}-\eqref{2303012154}, see \cite[Section 3]{PPVV2021}.
\end{itemize}

\subsection{Existence results with Sobolev critical growth}
 It seems that all the above works can not be directly extended to the nonlinearities involving Sobolev critical growth. As far as we are aware, we did not find any works on normalized solution with Sobolev critical growth and potential $V \not\equiv 0$. Motivated by \cite{ WW22, Soave20_JFA} for the non-potential case, we consider \eqref{041003} with mass-critical or mass-supercritical, and Sobolev critical nonlinearities. For potential $V\in C^1(\mathbb{R}^N)$, we will use the following assumptions:
\begin{itemize}
\item [$(V_1)$]  $\lim\limits_{|x|\rightarrow\infty} V(x)=\underset{x\in  \mathbb{R}^N}{\sup} V(x)=:V_{\infty}<\infty$ and there exists $\sigma_1> 0$ such that
\begin{equation}\label{2301262110}
    \int_{\mathbb{R}^N}\left|V-V_\infty\right|u^2 \ud x\le \sigma_1\|\nabla u\|^2_2, \;\; \forall\, u \in H^1(\mathbb{R}^N).
\end{equation}
\item[$(V_2)$] Let $W(x):=\frac12 \langle \nabla V(x),x\rangle$, there holds $\lim\limits_{|x|\rightarrow\infty} W(x)=0$ and there is $\sigma_2> 0$ such that
\begin{equation}
    \left|\int_{\mathbb{R}^N}  \left[\frac12 (V-V_{\infty})+\frac{1}{p\gamma_p} W\right] u^2 \ud x\right|\le \sigma_2\|\nabla u\|^2_2, \;\;\forall\, u \in H^1(\mathbb{R}^N).
\end{equation}
\item[$(\widetilde{V}_2)$] There exists $\widetilde{\sigma}_2> 0$ such that
\begin{equation}\label{2301271518}
    \left|\int_{\mathbb{R}^N}Wu^2 \ud x\right|\le \widetilde{\sigma}_2\|u\|^2_{H^1}, \;\;\forall\, u \in H^1(\mathbb{R}^N).
\end{equation}
\item[$(V_3)$] $\underset{|x|\to \infty}{\lim}|x|(V-V_{\infty})(x)=0$. There exists ${\sigma_3}>0$ such that
\begin{equation}\label{2301271916}
     \int_{\mathbb{R}^N} \left(V-V_{\infty}\right)^2|x|^2u^2  \ud x\le {\sigma_3^2}\|\nabla u\|^2_2, \;\;\forall\, u \in H^1(\mathbb{R}^N).
\end{equation}
\item[$(V_4)$] $V+W\le V_{\infty}$ for any $x\in\mathbb{R}^N.$
\end{itemize}

\begin{remark}\label{2304061410}
The condition \eqref{2301271916} implies \eqref{2301262110}. In fact, using Hardy's inequality, we have
\[
\int_{\mathbb{R}^N}\left|V-V_\infty\right|u^2 \ud x\le \|(V-V_\infty)|x|u\|_2\left\|\frac{u}{|x|}\right\|_2\le \frac{2}{N-2}\sigma_3 \|\nabla u\|_2^2.
\]
Thus $\sigma_1\le \frac{2}{N-2}\sigma_3$.
\end{remark}
\begin{remark}\label{2304121417}
In $(V_1)$, if $V_\infty\neq 0$, one may replace $(V, \lambda)$ by $(V-V_\infty, \lambda+V_\infty)$. Hence, without loss of generality, we can assume that $V_\infty=0$.
\end{remark}

By virtue of Remark \ref{2304121417}, from now on, {\bf we assume that $V_\infty=0$}. Problem \eqref{041003} can be regarded as a perturbation problem of \cite{Soave20_JFA, WW22}, whose energy functional is given by
\begin{equation*}
\begin{aligned}
J_{\mu, V}(u)=\frac{1}{2}\|\nabla u\|_2^2
+\frac12 \int_{\mathbb{R}^N} V(x)u^2 \ud x-\frac{\mu}{p}\|u\|_p^{p}
-\frac{1}{2^{*}}\|u\|_{2^*}^{2^{*}}.
\end{aligned}
\end{equation*}
Recall that for $p\in [\hat{p}, 2^*)$, the solutions of  \cite[Theorem 1.1]{Soave20_JFA} and \cite[Theorem 1.1]{WW22} are mountain pass type solutions of Jeanjean. In present paper, we seek for $V$ to preserve the mountain pass geometry of $J_{\mu, V}$ on $S_a$. When different integrability assumptions are required on $V$, the Pohozaev identity for \eqref{041003} could have two different type of forms, that is,
\begin{itemize}
\item[$(a)$]  if $V\in L_{loc}^{\frac{N}{2}}(\mathbb{R}^N)$ satisfies \eqref{2301271916}, any solution to \eqref{22062610} will satisfy
\begin{equation}\label{2306261440}
P_{\mu, V}(u):=\|\nabla u\|_2^2
+\frac{N}{2}\int_{\mathbb{R}^N}V(x)u^2 \ud x +\int_{\mathbb{R}^N}V(x)u\nabla u\cdot x \ud x-\mu\gamma_p\|u\|_p^p
-\|u\|_{2^*}^{2^*}=0;
\end{equation}
\item[$(b)$] if $V\in C^1(\mathbb{R}^N)$ and \eqref{2301262110}, \eqref{2301271518} hold, any solution to \eqref{22062610} will satisfy
\begin{equation}\label{2306261441}
\widetilde{P}_{\mu, V}(u):=\|\nabla u\|_2^2
-\int_{\mathbb{R}^N} W(x)u^2 \ud x-\mu\gamma_p\|u\|_p^p
-\|u\|_{2^*}^{2^*}=0.
\end{equation}
\end{itemize}
We will work with either \eqref{2306261440} or \eqref{2306261441} following different assumptions on $V$. Our main results are the following.
\begin{theorem}\label{22071658}
Assume that $N\ge 3$, $\mu, a>0$ and $V\in C^1(\mathbb{R}^N)$ satisfies $(V_1)$, $(\widetilde{V}_2)$, $(V_3)$, $(V_4)$.
\begin{itemize}
\item[$(i)$] If $p=\hat{p}$, $\sigma_1<1$ and
\begin{equation}\label{2303021949}
\sigma_3<\frac{2}{N-2}\;\;\text{if}~N\ge 4; ~~~\text{or}~~ ~{3\sigma_1 + 2{\sigma_3}} < 4\;\; \text{if}~N=3,
\end{equation}
then there exists some $a_0=a_0(\mu, p, N)>0$ such that for any $a\in (0, a_0)$, problem \eqref{041003} has a positive solution.\par
\item[$(ii)$] If $p\in (\hat{p}, 2^*)$, $\sigma_1<1$ and
\begin{equation}
{\sigma_3} < \frac{p\gamma_p}{2} - 1\;\; \text{if}~N\ge 4;
\end{equation}
\begin{equation}
(p\gamma_p - 3) \sigma_1 + 2\sigma_3 < p\gamma_p - 2\;\; \text{if}~N=3,\, 4<p<6,
\end{equation}
then for any $\mu,\, a>0$, problem \eqref{041003} has a positive solution.
\end{itemize}
\end{theorem}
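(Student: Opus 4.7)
The overall plan is to adapt the Jeanjean--Soave--Wei--Wu mountain pass scheme developed for the pure Sobolev-critical problem to the perturbed problem \eqref{041003}. Assumption $(V_1)$ preserves the mountain-pass geometry and makes $V\le 0$ automatic (since $V_\infty=0$); $(\widetilde{V}_2)$ enters the Pohozaev bound that yields boundedness of the Palais-Smale sequence; $(V_3)$ controls the weighted decay of $V$ needed in the compactness analysis; and $(V_4)$ both ensures that the scaling profile $h\mapsto J_{\mu,V}(u_h)$ has a unique maximum and allows the energy gap $M_a<m_a+\frac{1}{N}\mathcal{S}^{N/2}$ to be transferred from the gap already known in the non-potential case.

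\textbf{Mountain pass geometry and Palais-Smale sequence.} Combining \eqref{22071819}, \eqref{2301112226} and $(V_1)$ with $\sigma_1<1$, I would bound
\[
J_{\mu,V}(u)\ge \frac{1-\sigma_1}{2}\|\nabla u\|_2^2 -\frac{\mu\,C_{N,p}^{p}}{p}\,a^{p(1-\gamma_p)}\|\nabla u\|_2^{p\gamma_p} -\frac{1}{2^{*}\,\mathcal{S}^{2^{*}/2}}\|\nabla u\|_2^{2^{*}}.
\]
For $p=\hat{p}$ with $a$ small and for $p\in(\hat{p},2^{*})$ with arbitrary $\mu,a>0$, the right-hand side exhibits the familiar local-minimum/negative-peak profile, giving a strict local minimum $v_a$ of $J_{\mu,V}|_{S_a}$ at level $m_{a,V}$ and a point of negative energy beyond it. Passing to the Jeanjean augmented functional $\widetilde{J}(u,s):=J_{\mu,V}(u_{e^s})$ on $S_a\times\mathbb{R}$ with $u_t(x)=t^{N/2}u(tx)$, a standard min-max plus Ekeland argument produces a Palais-Smale sequence $\{u_n\}\subset S_a$ for $J_{\mu,V}$ at the mountain pass level $M_a$ with $\widetilde{P}_{\mu,V}(u_n)\to 0$.

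\textbf{Boundedness, weak limit and energy gap.} Combining $J_{\mu,V}(u_n)\to M_a$ with $\widetilde{P}_{\mu,V}(u_n)\to 0$ and absorbing $\int V u_n^2\,\ud x$ and $\int W u_n^2\,\ud x$ through $(V_1)$ and $(\widetilde{V}_2)$ yields a uniform $H^1$ bound; the quantitative conditions on $\sigma_1$ and $\sigma_3$ in \eqref{2303021949} arise from this balancing, and in dimension $N=3$ the weaker tail decay of the Aubin--Talenti bubble is what forces the mixed inequality. Along a subsequence $u_n\rightharpoonup u$ in $H^1(\mathbb{R}^N)$, and the decay hypotheses from $(V_1), (V_3)$ (together with Hardy's inequality) ensure the potential terms pass to the limit. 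The crux is the strict inequality
\[
M_a<m_a+\tfrac{1}{N}\mathcal{S}^{N/2},
\]
where $m_a$ is the ground-state level of \cite{Soave20_JFA, WW22} for $V\equiv 0$. Testing the Wei--Wu optimal path (glueing the Soave ground state with a rescaled Aubin--Talenti bubble $\widetilde{U}_{\varepsilon,0}$) in $J_{\mu,V}$ and using $V\le 0$ from $(V_1)$ gives the pointwise comparison $J_{\mu,V}(u_h^{\star})\le J_{\mu,0}(u_h^{\star})$, so the Wei--Wu estimate $\max_h J_{\mu,0}(u_h^{\star})<m_a+\tfrac{1}{N}\mathcal{S}^{N/2}$ transfers directly.

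\textbf{Compactness and positivity.} A Brezis--Lieb decomposition $u_n=u+w_n$ rules out both a bubble concentrating at a point (which would cost at least $\tfrac{1}{N}\mathcal{S}^{N/2}$) and a bubble or Soave-type profile escaping to infinity (which would cost at least $m_a$, since $V\to 0$ at infinity so the problem at infinity is precisely that of \cite{Soave20_JFA, WW22}); the strict gap above forces $u_n\to u$ strongly in $H^1$ and $u\not\equiv 0$. Positivity is then obtained by applying the whole argument to $|u|$ and invoking the strong maximum principle after verifying $\lambda<0$ via the Pohozaev identity together with $(V_4)$. The main obstacle is this energy-gap inequality together with the simultaneous accounting of the $L^2$-constraint, the Pohozaev constraint $\widetilde{P}_{\mu,V}=0$, and the $V$-induced corrections on the two-parameter bubble family; dimension $N=3$ is especially tight because the Aubin--Talenti tail lies only marginally in $L^2$, which is exactly why the more restrictive mixed conditions on $\sigma_1$ and $\sigma_3$ appear in \eqref{2303021949}.
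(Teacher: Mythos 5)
Your overall architecture (Jeanjean's augmented functional, a Palais--Smale sequence on the Pohozaev manifold, Brezis--Lieb splitting, sign of $\lambda$ via $(V_4)$, positivity by the maximum principle) matches the paper, but the energy threshold you propose is the wrong one and does not close the compactness argument. You aim for $M_a<m_a+\frac{1}{N}\mathcal{S}^{N/2}$, transferred from the Wei--Wu mountain-pass estimate. That threshold is designed for the \emph{radial, non-potential} setting, where the only possible loss is a bubble costing $\frac{1}{N}\mathcal{S}^{N/2}$ attached to a nontrivial weak limit of energy $\ge m_a$. Here $V$ is not radial, so one cannot work in $H^1_{rad}$, and a second loss mechanism appears: a translated solution of the limit problem \eqref{22071100} escaping to infinity, which costs only $m_a$ (by the monotonicity of $a\mapsto m_a$, Lemma \ref{2207092032}), while the weak limit $u$ solves the \emph{potential} equation and is only known to satisfy $J_{\mu,V}(u)\ge 0$. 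With your bound, the scenario ``one escaping Soave profile plus a weak limit of small nonnegative energy'' gives total energy $\ge m_a$, which is perfectly compatible with $M_a<m_a+\frac{1}{N}\mathcal{S}^{N/2}$; nothing is contradicted. The paper instead arranges the \emph{strict} bound $m_{V,a}<m_a$ (Lemma \ref{2207091943}) by placing the mountain-pass endpoints on the fiber $h\mapsto v^a_h$ of the non-potential ground state and exploiting $V\le 0$, $V\not\equiv 0$; combined with $m_a<\frac{1}{N}\mathcal{S}^{N/2}$ and with $J_{\mu,V}(u)\ge 0$ for the weak limit (Lemma \ref{22072415}), this excludes both bubbling and escape. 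The nonnegativity lemma is the second, essential place where the quantitative conditions \eqref{2303021949} are used, and it is absent from your outline.

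A second concrete mismatch: for Theorem \ref{22071658} the boundedness of the Palais--Smale sequence cannot be run through $\widetilde{P}_{\mu,V}$ as you propose. The hypothesis $(\widetilde{V}_2)$ carries no smallness on $\widetilde{\sigma}_2$ and controls $\int W u^2$ only by the full $H^1$ norm, so it cannot absorb the $W$-term against $(\tfrac12-\tfrac{1}{p\gamma_p})\|\nabla u_n\|_2^2$; in the paper $(\widetilde{V}_2)$ serves only to validate the Pohozaev identity \eqref{2306261441} used to determine the sign of $\lambda$ (Lemma \ref{22062613}). The boundedness and the conditions on $\sigma_3$ in \eqref{2303021949} are tied to the other Pohozaev functional $P_{\mu,V}$, whose extra term $\int V\,u\nabla u\cdot x$ is controlled by $(V_3)$; this is exactly why Theorem \ref{22071658} assumes $(V_3)$ while Theorem \ref{2207241248} assumes $(V_2)$. (A minor further point: for $p\ge\hat p$ the functional has no strict local minimum on $S_a$; the geometry is ``flat near zero gradient, positive on $\partial A_{k_0}$, unbounded below,'' and the paper constructs a single mountain-pass solution, not a minimizer plus an excited state.)
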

\begin{theorem}\label{2207241248}
Assume that $N\ge 3$, $\mu, a>0$ and $V\in C^1(\mathbb{R}^N)$ satisfies $(V_1)$, $(V_2)$, $(V_4)$.
\begin{itemize}
\item[$(i)$] If $p=\hat{p}$, $\sigma_1<1$ and
\begin{equation}
\sigma_2<\frac1N,
\end{equation}
there exists some $a_0=a_0(\mu, p, N)>0$ such that for any $a\in (0, a_0)$, problem \eqref{041003} has a positive solution.\par
\item[$(ii)$] If $p\in (\hat{p}, 2^*)$, $\sigma_1<1$ and
\begin{equation}\label{2303021950}
\sigma_2 < \frac12-\frac{1}{p\gamma_p},
\end{equation}
then for any $\mu,\, a>0$, problem \eqref{041003} has a positive solution.
\end{itemize}
\end{theorem}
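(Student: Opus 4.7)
The plan is to adapt the Jeanjean mountain-pass framework used by Soave \cite{Soave20_JFA} and Wei-Wu \cite{WW22} in the non-potential case, taking the Pohozaev identity $\widetilde P_{\mu,V}(u)=0$ as the natural constraint: it is exactly the condition $\partial_h J_{\mu,V}(u_h)\big|_{h=1}=0$ produced by the $L^2$-preserving fibration $u_h(x)=h^{N/2}u(hx)$. Via the change of variables $\int V(y/h)u^2\,\ud y=\int V\,u_h^2\,\ud x$, hypothesis $(V_1)$ (with $V_\infty=0$) controls the potential energy by $\sigma_1 h^2\|\nabla u\|_2^2$, dominated by the kinetic term whenever $\sigma_1<1$. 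Combined with Gagliardo-Nirenberg applied to the subcritical term---requiring the smallness of $\mu a^{4/N}$ in the mass-critical case $p=\hat p$---the fiber $h\mapsto J_{\mu,V}(u_h)$ stays positive near $h=0^+$, attains a positive maximum, and diverges to $-\infty$ as $h\to\infty$ through the Sobolev-critical term. This yields a well-defined mountain-pass value $M_a^V>0$ on $S_a$.

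\textbf{Paragraph 2 (PS sequence and boundedness).} Jeanjean's augmented functional $\widetilde J_{\mu,V}(u,h):=J_{\mu,V}(u_h)$ on $S_a\times\mathbb{R}^+$ then produces a Palais-Smale sequence $(u_n)\subset S_a$ at level $M_a^V$ with $\widetilde P_{\mu,V}(u_n)\to 0$ and Lagrange multipliers $\lambda_n$. Boundedness in $H^1$ comes from the algebraic identity
\begin{equation*}
J_{\mu,V}(u)-\frac{1}{p\gamma_p}\widetilde P_{\mu,V}(u)=\Bigl(\frac12-\frac{1}{p\gamma_p}\Bigr)\|\nabla u\|_2^2+\int_{\mathbb{R}^N}\Bigl(\frac12 V+\frac{1}{p\gamma_p}W\Bigr)u^2\,\ud x+\Bigl(\frac{1}{p\gamma_p}-\frac{1}{2^*}\Bigr)\|u\|_{2^*}^{2^*}
\end{equation*}
together with $(V_2)$: for $p>\hat p$ this yields $(\tfrac12-\tfrac{1}{p\gamma_p}-\sigma_2)\|\nabla u_n\|_2^2\le M_a^V+o(1)$, so \eqref{2303021950} is exactly the natural smallness. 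The mass-critical case is analogous, with the subcritical term absorbed through Gagliardo-Nirenberg under $\sigma_2<1/N$ and $a<a_0$. Up to a subsequence we obtain $u_n\rightharpoonup u_0$ in $H^1(\mathbb{R}^N)$ and $\lambda_n\to\lambda\in\mathbb{R}$.

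\textbf{Paragraph 3 (level estimate --- main obstacle --- and conclusion).} The key step, and the one I expect to be the hardest, is the strict bound
\begin{equation*}
M_a^V<m_a+\frac{1}{N}\mathcal{S}^{N/2},
\end{equation*}
where $m_a$ is the non-potential ground-state level of Theorem \ref{2207231547}. This single inequality blocks both Sobolev-critical concentration (which would inject the quantum $\tfrac1N\mathcal{S}^{N/2}$) and the escape of a non-potential profile to spatial infinity (cost $m_a$, since $V_\infty=0$). I would build the test path by gluing the non-potential ground state $U\in\mathcal{P}_\mu$ with a Talenti bubble and sliding along the fibration $h\mapsto v_h$; the Aubin-type expansion of the bubble part reproduces the gain $\tfrac1N\mathcal{S}^{N/2}$ up to lower-order remainders, while the potential contribution is driven by $\tfrac{d}{dh}\bigl[\tfrac12\int V\,u_h^2\bigr]$, which is precisely where $(V_4)$ enters decisively: the pointwise inequality $V+W\le V_\infty=0$ forces this quantity to carry the right sign and produces the \emph{strict} gap (rather than a mere equality) over the full range of fibers. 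Once the level estimate is available, a Br\'ezis-Lieb / profile decomposition of the bounded PS sequence shows that both the Sobolev defect and any profile escaping to infinity must vanish, so $u_n\to u_0$ strongly in $H^1$. Thus $u_0\in S_a$ solves \eqref{041003} with some $\lambda<0$; since $J_{\mu,V}$ is invariant under $u\mapsto|u|$, I may assume $u_0\ge 0$, and the strong maximum principle applied to $-\Delta u_0+(V-\lambda)u_0=\mu u_0^{p-1}+u_0^{2^*-1}\ge 0$ then yields $u_0>0$.
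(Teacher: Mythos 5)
Your Paragraphs 1 and 2 are essentially the paper's argument: the mountain-pass geometry under $\sigma_1<1$ is Lemma \ref{2207232313}, the augmented functional producing a Palais--Smale sequence with $\widetilde P_{\mu,V}(u_n)\to 0$ is Lemma \ref{22072020}, and your algebraic identity $J_{\mu,V}-\frac{1}{p\gamma_p}\widetilde P_{\mu,V}$ (in which the $\|u\|_p^p$ term indeed cancels) is exactly how Lemmas \ref{2207241537} and \ref{22072315} use $(V_2)$ and \eqref{2303021950}. The problem is Paragraph 3. The level estimate you propose, $M_a^V<m_a+\frac1N\mathcal S^{N/2}$, is the one from \cite{JL2022,WW22} for the \emph{second}, higher-energy solution in the radial, non-potential setting; here it is both harder than necessary and, more importantly, insufficient. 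Since $V$ is not assumed radial, one cannot work in $H^1_{rad}$, so in the profile decomposition a piece $w^j$ may escape to infinity; each such piece solves the limit (non-potential) equation and costs at least $m_a$ by Lemmas \ref{22070919} and \ref{2207092032}. With only $M_a^V<m_a+\frac1N\mathcal S^{N/2}$ you cannot exclude the scenario of one escaping profile of energy $m_a$ plus a nonnegative-energy weak limit, so your claim that ``any profile escaping to infinity must vanish'' does not follow. You would also need to actually carry out the bubble-gluing expansion, and it is not clear that $(V_4)$ plays the role you assign to it there.

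The paper avoids all of this by choosing the mountain-pass endpoints $e_0=v^a_{h_0}$, $e_1=v^a_{h_1}$ on the fiber of the \emph{non-potential} ground state $v^a$ of Theorem \ref{2207231547}. Since $V\le V_\infty=0$ and $V\not\equiv 0$, one gets $J_{\mu,V}(v^a_h)<J_\mu(v^a_h)$ pointwise along the fiber, hence the strict chain $m_{V,a}<m_a<\frac1N\mathcal S^{N/2}$ (Lemmas \ref{2207091943} and \ref{2301312159}) with no bubble construction at all. This \emph{stronger} bound is what makes the splitting lemma (Lemma \ref{2207110022}) close: a Sobolev bubble would force the level up to $\frac1N\mathcal S^{N/2}$, and an escaping profile together with $J_{\mu,V}(u)\ge 0$ (Lemma \ref{2207241537}) would force it up to $m_a$; both contradict $m_{V,a}<m_a$. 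Finally, $(V_4)$ is not used in the level estimate but in Lemma \ref{22062613}, to show via the Pohozaev and Nehari identities that any nontrivial solution has $\lambda<0$; this sign is then needed both to recover strong $L^2$-convergence of $u_n-u$ when vanishing occurs and to identify the limit equation of the escaping profiles. You should replace your Paragraph 3 by this endpoint choice and the resulting strict inequality $m_{V,a}<m_a$.
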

To prove Theorems \ref{22071658}, \ref{2207241248}, we will follow Jeanjean's method \cite{Jeanjean97}. First, the condition $\sigma_1<1$
in Theorems \ref{22071658}-\ref{2207241248} is to preserve the mountain pass geometry of $J_{\mu, V}$ on $S_a$ (see Lemma \ref{2207232313}). Note that for the Sobolev critical problems, the compactness of a Palais-Smale sequence often relies on its energy level. For this reason, in section \ref{2306251339} we select the mountain pass end points $\{e_0, e_1\}$ located on the fiber of $v^a$, where $v^a$ is the solution of Theorem \ref{2207231547}. Thus, the mountain pass level of $J_{\mu,\, V}$ has an upper bound $\frac{1}{N}\mathcal{S}^{\frac{N}{2}}$ (see Lemmas \ref{2301312159}, \ref{2209120004}). 

Next, considering the augmented functional $\widetilde{J}_{\mu, V}$ given in \eqref{2307301040}, it also has a mountain pass geometry on $S_a\times \mathbb{R}^+$with the same mountain pass level as ${J}_{\mu, V}$, seeing Lemma \ref{2302152315}. Applying Proposition \ref{22072013} to $\widetilde{J}_{\mu, V}|_{S_a\times \mathbb{R}^+}$, we get a Palais-Smale sequence approaching the Pohozaev manifold
\[
\mathcal{P}_{\mu, V}:= \big\{u\in S_a: {P}_{\mu, V}(u)=0 \big\}\;\; \text{or}\;\; \widetilde{\mathcal{P}}_{\mu, V}:= \big\{u\in S_a: \widetilde{P}_{\mu, V}(u)=0 \big\}.
\]
In contrast with \cite{DZ2022}, we do not require the Pohozaev manifold as a natural constraint. Thus the Pohozaev manifold is not required to be in $C^1$, hence we just assume $V\in C^1(\mathbb{R}^N)$. Following the method of \cite{BMRV2021,MRV2022}, \eqref{2303021949}-\eqref{2303021950} are used to guarantee the boundedness of the Palais-Smale sequence, and the nonnegativity of the energy for solutions to \eqref{041003} (see Lemmas \ref{22072415}, \ref{2207241537}, \ref{22072315}). Due to the critical term and the potential term, the sign of $\lambda$ cannot simply be concluded. Different from \cite{MRV2022} and \cite{DZ2022},  we will use the assumption $(V_4)$ to compute the sign of $\lambda$ (see Lemma \ref{22062613}).

Furthermore, when considering the compactness of the Palais-Smale sequence in $H^1(\mathbb{R}^N)$, we cannot always work in the radial subspace $H_{rad}^1(\mathbb{R}^N)$ since the potential is not supposed to be radial, which brings some new difficulties. At last, the Sobolev critical growth yields further difficulties since the usual Lions' lemma (see \cite[Theorem 1.34]{Willem96}) is no longer suitable. To overcome all these obstacles, we will prove a splitting lemma (see Lemma \ref{2207110022}) inspired by \cite{BMRV2021, MRV2022} and \cite[Proposition 3.1]{Soave20_JFA}. 

When these parameters $\sigma_1$-$\sigma_3$, $\widetilde{\sigma}_2$ are 0, the equation \eqref{041003} becomes the non-potential case, and Theorems \ref{22071658}, \ref{2207241248} will come back to Theorem \ref{2207231547}.

\begin{remark}\label{2302011420}
Notice that \eqref{2301262110}-\eqref{2301271916} are a kind of compactness assumptions or Poincar\'e inequalities. Let $\alpha, \beta>0$ and
\[
V_{\alpha,\beta}=\left(\frac{\beta}{\alpha^4}|x|^2-\frac{2\beta}{\alpha^2}\right)\chi_{\{|x|\le \alpha\}} -\frac{\beta}{|x|^2} \chi_{\{|x|> \alpha\}},
\]
where $\chi$ is the characteristic function. Whenever $\alpha$, $\beta$ are positive but small enough, all the assumptions $(V_1)$--$(V_4)$ and $(\widetilde{V}_2)$ will be satisfied.
%Note that $(V_4)$ actually is equivalent to
%\[
%\frac{\partial [(V(r\theta)-V_{\infty})r^2]}{\partial r}\le 0,
%\]
%where $\theta=\frac{x}{|x|}\in \partial B_1(0)$, $r=|x|$. Therefore, for fixed $\theta\in  \partial B_1(0)$, $(V(r\theta)-V_{\infty})r^2$ is non-increasing with respect to the variable $r$. Also, it holds that $(V(r\theta)-V_{\infty})r^2\le 0$. Hence,
%\[
%\underset{r\to \infty}{\lim}(V(r\theta)-V_{\infty})r^2=c(\theta)<0\;\; \text{or}\;\; \underset{r\to \infty}{\lim}(V(r\theta)-V_{\infty})r^2=-\infty.
%\]
%On the other hand, if $V$ is a radial function, it follows from $(V_1)$ and Hardy's inequality that $|V(x)-V_{\infty}|$ could be $O(\frac{1}{|x|^2})$.  It results that $V(x)-V_{\infty}$ at infinity may like $\frac{C}{|x|^2}$.
\end{remark}
\begin{remark}
Theorems \ref{22071658} and \ref{2207241248} respectively use the Pohozaev identities \eqref{2306261440} and \eqref{2306261441}. When $p=\hat{p}$, the values of $a_0$ in Theorems \ref{22071658}, \ref{2207241248} can be given from \eqref{22072321132}, \eqref{22072321133}, \eqref{22072321141} and \eqref{2301301540}. By Lemma \ref{22062613}, we also know that $\lambda<-V_\infty$ in Theorems \ref{22071658}, \ref{2207241248}.
\end{remark}

%\begin{remark}
%The condition $\sigma_1<1$
%in Theorems \ref{22071658}-\ref{2207241248} is to preserve the mountain pass geometry of $J_{\mu, V}$ on $S_a$ (see Lemma \ref{2207232313}). \eqref{2303021949}-\eqref{2303021950} are used to guarantee the boundedness of the Palais-Smale sequence, and the nonnegativity of the energy for solutions to \eqref{041003} (see Lemmas \ref{22072415}, \ref{2207241537}, \ref{22072315}).
%\end{remark}
%\begin{remark}
%In contrast with \cite{DZ2022}, we do not require the Pohozaev manifold as a natural constraint. Thus $V$ is not required to be in $C^2$.
%\end{remark}
\begin{remark}\label{2302161844}
When $V\in C_{loc}^{0,\alpha}(\mathbb{R}^N, \mathbb{R})$ for some $\alpha\in (0,1)$, by standard elliptic theory (\cite[B.3 Lemma, B.2 Theorem]{Struwe2008}), any solution in $H^1(\mathbb{R}^N)$ of \eqref{041003} is a classical solution.
\end{remark}
This paper is organized as follows. In Section \ref{2306251337}, we introduce some notations and preliminary results. In Section \ref{2306251339}, we verify the mountain pass structure of $J_{\mu, V}$ on $S_a$. In section \ref{2306251338}, we derive the existence of a Palais-Smale sequence approaching Pohozaev manifold. In section \ref{2306251340}, we complete the proof of Theorem \ref{22071658} and Theorem \ref{2207241248}.

\section{Notations and preliminary results}\label{2306251337}
For $p\in [1, \infty]$, we denote by $L^p(\mathbb{R}^N)$ the Lebesgue's space with norm $\|\cdot\|_p$ and by $H^1(\mathbb{R}^N)$, $D^{1,2}(\mathbb{R}^N)$ the usual Sobolev spaces. We always denote $\gamma_p=\frac{N(p-2)}{2p}$. Let $\mathcal{S}$ be the best Sobolev constant given in \eqref{2301112226}, $C_{N, p}$ be the optimal constant for Gagliardo-Nirenberg inequality given in \eqref{22071819}. For $u\in H^1(\mathbb{R}^N)$, $u_h$ denotes the scaling transformation \eqref{2301111617}. $o_n(1)$ denotes a real sequence with $o_n(1)\to 0$ as $n\to \infty$.

For $2 \leq p \leq 2^*$, the Pohozaev identity of the equation
\begin{equation}\label{22071100}
-\Delta u=\lambda u+ \mu|u|^{p-2}u+ |u|^{2^*-2}u~ \;\;\text { in } \mathbb{R}^N,
\end{equation}
is $P_\mu(u)=0$, where for any $u\in H^1(\mathbb{R}^N)$,
\begin{equation}\label{2306281307}
P_{\mu}(u):= \|\nabla u\|_2^2-\mu \gamma_p\|u\|_p^p-\|u\|_{2^*}^{2^*}.
\end{equation}

We state a series of preliminary results. We begin with giving the Pohozaev identity for $u\in H^1(\mathbb{R}^N)$, solution to
\begin{equation}\label{22062610}
-\Delta u+V(x)u=\lambda u+\mu |u|^{p-2}u+|u|^{2^{*}-2}u \;\;\text { in } \mathbb{R}^N.
\end{equation}
Let $h > 0, u\in S_a$, we denote a fiber function of $u$ as
\begin{align}\label{22072120}
\begin{split}
  \Psi_u(h) &=J_{\mu, V}(u_h)=\frac{h^2}{2}\|\nabla u\|_2^2
+\frac{h^N}{2} \int_{\mathbb{R}^N} V(x)u^2(hx) \ud x-\frac{\mu h^{p\gamma_p }}{p}\|u\|_p^p
-\frac{h^{2^*}}{2^{*}}\|u\|_{2^*}^{2^*}\\
& =\frac{h^{2}}{2}\|\nabla u\|_2^2
+\frac12 \int_{\mathbb{R}^N} V(h^{-1}x)u^2 \ud x-\frac{\mu h^{p\gamma_p }}{p}\|u\|_p^p
-\frac{h^{2^*}}{2^{*}}\|u\|_{2^*}^{2^*}.
\end{split}
\end{align}
Using Lemma \ref{2301272023} (see Appendix), the condition \eqref{2301271916} yields $\Psi'_u(1)={P}_{\mu, V}(u)$ where ${P}_{\mu, V}$ is given in \eqref{2306261440}. On the other hand, if $V\in C^1(\mathbb{R}^N)$ and \eqref{2301262110}, \eqref{2301271518} hold, then
$\Psi'_u(1)=\widetilde{P}_{\mu, V}(u)$ where $\widetilde{P}_{\mu, V}$ is defined in \eqref{2306261441}.
%By \eqref{2207212144} and \eqref{2207212047}, respectively we define for $u\in S_a$,
%\begin{equation*}
%\begin{cases}
%\begin{aligned}
%&P_{\mu, V}(u):=\|\nabla u\|_2^2
%+\frac{N}{2}\int_{\mathbb{R}^N}V(x)u^2 \ud x +\int_{\mathbb{R}^N}V(x)u\nabla u\cdot x \ud x-\mu\gamma_p\|u\|_p^p
%-\|u\|_{2^*}^{2^*},\\
%&\widetilde{P}_{\mu, V}(u):=\|\nabla u\|_2^2
%-\int_{\mathbb{R}^N} W(x)u^2 \ud x-\mu\gamma_p\|u\|_p^p
%-\|u\|_{2^*}^{2^*}.
%\end{aligned}
%\end{cases}
%\end{equation*}
Using Proposition \ref{2302171335}, any solution of \eqref{22062610} satisfies the Pohozaev identity
\begin{equation}\label{2302071001}
\begin{cases}
{P}_{\mu, V}(u)=0\;\; \text{if ~} V\in L_{loc}^{\frac{N}{2}}(\mathbb{R}^N)~\text{and}~\eqref{2301271916}~\text{hold},\\
\widetilde{P}_{\mu, V}(u)=0\;\; \text{if ~} V\in C^1(\mathbb{R}^N) ~\text{and}~\eqref{2301262110},\,\eqref{2301271518}~\text{hold}.
\end{cases}
\end{equation}\par
The following result comes from \cite[Lemma 3.3]{WW22}, it showed the monotonicity and the bound of $m_a$.
\begin{lemma}\label{2301312159}
Let $N\ge 3$ and $\hat{p}\le p<2^*$. Then there exists $\alpha_{N, p}$ only depending on $N, p$, such that $m_{a}$ is strictly decreasing for $0<\mu<a^{{p\gamma_p-p}}\alpha_{N, p}$ and is nonincreasing for $\mu\ge a^{{p\gamma_p-p}}\alpha_{N, p}$, where $m_a$ is given by \eqref{22071824}. Moreover, $0<m_a<\frac{1}{N}\mathcal{S}^{\frac{N}{2}}$ for all $\mu > 0$ if $\hat{p} < p<2^*$, while $m_a=0$ for $\mu\ge a^{{p\gamma_p-p}}\alpha_{N, p}$ if $p=\hat{p}$.
\end{lemma}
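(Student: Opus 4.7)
My plan is to reformulate $m_a$ as a minimax problem on the fibers and then exploit the explicit dependence of $J_\mu$ on $\mu$. For each $u\in S_a$, the fiber map
\[
\Psi_u(h)=J_\mu(u_h)=\frac{h^2}{2}\|\nabla u\|_2^2-\frac{\mu h^{p\gamma_p}}{p}\|u\|_p^p-\frac{h^{2^*}}{2^*}\|u\|_{2^*}^{2^*}
\]
has $p\gamma_p\in[2,2^*)$, so $\Psi_u'(h)/h$ is strictly decreasing in $h>0$. When $\hat p<p<2^*$ this equation has a unique positive root $h_\mu(u)$, a strict maximum of $\Psi_u$; for $p=\hat p$ the same holds provided $\|\nabla u\|_2^2>\mu\gamma_{\hat p}\|u\|_{\hat p}^{\hat p}$, which Gagliardo--Nirenberg reduces to $\mu a^{4/N}<\hat p/(2C_{N,\hat p}^{\hat p})=:\alpha_{N,\hat p}$. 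This gives a projection $u\mapsto u_{h_\mu(u)}\in\mathcal P_\mu$ and the identity $m_a=\inf_{u\in S_a}\max_{h>0}J_\mu(u_h)$.

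From this minimax formula, monotonicity in $\mu$ follows transparently: for $0<\mu_1<\mu_2$, any $u\in S_a$ with $\|u\|_p>0$, and any $h>0$, $J_{\mu_2}(u_h)<J_{\mu_1}(u_h)$, so taking max-inf yields $m_a(\mu_2)\le m_a(\mu_1)$. To upgrade to strict monotonicity on $0<\mu<a^{p\gamma_p-p}\alpha_{N,p}$, I would use that the infimum is attained: let $\bar u$ realize $m_a(\mu_1)$ as supplied by Theorem \ref{2207231547}, and estimate
\[
m_a(\mu_2)\le\max_{h>0}J_{\mu_2}(\bar u_h)<\max_{h>0}J_{\mu_1}(\bar u_h)=m_a(\mu_1),
\]
the strict inequality coming from $\|\bar u\|_p>0$.

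For positivity of $m_a$ when $\hat p<p<2^*$, I would use the identity $J_\mu(u)=\mu\,\tfrac{p\gamma_p-2}{2p}\|u\|_p^p+\tfrac1N\|u\|_{2^*}^{2^*}$ on $\mathcal P_\mu$ (obtained by substituting the Pohozaev relation into $J_\mu$), together with the fact that Sobolev and Gagliardo--Nirenberg, applied to $\|\nabla u\|_2^2=\mu\gamma_p\|u\|_p^p+\|u\|_{2^*}^{2^*}$, force $\|\nabla u\|_2$ to stay bounded away from $0$ on $\mathcal P_\mu$. For the mass-critical regime $p=\hat p$ with $\mu a^{4/N}\ge\alpha_{N,\hat p}$, I would take a concentration family $u_n(x)=n^{N/2}u_0(nx)\in S_a$ with a fixed profile $u_0\in S_a$; projecting onto $\mathcal P_\mu$ (when possible) or testing directly shows $J_\mu(u_n)\to 0$, which, together with $m_a\ge 0$ from the identity above, gives $m_a=0$.

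The main obstacle, and the step requiring the most care, is the strict upper bound $m_a<\tfrac1N\mathcal S^{N/2}$. The plan is a Br\'ezis--Nirenberg-type test function argument: take a truncated Aubin--Talenti bubble $\eta_\varepsilon=\phi\,\widetilde U_{\varepsilon,0}$ with compactly supported cutoff $\phi$, normalize in $L^2$ to mass $a$, and evaluate $\max_{h>0}J_\mu((\eta_\varepsilon)_h)$. Using the standard expansions $\|\nabla\eta_\varepsilon\|_2^2=\mathcal S^{N/2}+O(\varepsilon^{N-2})$ and $\|\eta_\varepsilon\|_{2^*}^{2^*}=\mathcal S^{N/2}+O(\varepsilon^N)$, together with the dimension-dependent asymptotics of $\|\eta_\varepsilon\|_p^p$, the negative $\mu$-correction is of strictly larger order than the positive remainders for small $\varepsilon$, which produces the strict inequality; the low-dimensional case analysis and the behaviour as $p\searrow\hat p$ are the genuinely delicate points.
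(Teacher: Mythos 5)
The paper does not prove this lemma at all: it is imported verbatim as \cite[Lemma 3.3]{WW22}, so there is no internal argument to compare against. Your proposal reconstructs what is essentially the Wei--Wu proof: the fiber-map/minimax characterization \eqref{2302152221}, monotonicity in $\mu$ read off from the explicit $\mu$-dependence of $J_\mu$, positivity via the identity $J_\mu(u)=\mu\frac{p\gamma_p-2}{2p}\|u\|_p^p+\frac1N\|u\|_{2^*}^{2^*}$ on $\mathcal P_\mu$ plus the uniform lower bound on $\|\nabla u\|_2$, and an Aubin--Talenti test computation for the strict bound. That is the right skeleton, and the monotonicity and positivity parts are sound (one small remark: once nonincreasingness in $\mu$ is in hand, the bound $m_a<\frac1N\mathcal S^{N/2}$ for \emph{all} $\mu>0$ follows from the bound for a single small $\mu$, which shortens your last step).

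Two genuine gaps remain. First, your argument that $m_a=0$ when $p=\hat p$ and $\mu a^{4/N}\ge\alpha_{N,\hat p}$ does not work as described: for $p=\hat p$ the mass-preserving dilation $u_n=n^{N/2}u_0(n\cdot)$ scales $\|\nabla u_n\|_2^2$ and $\mu\gamma_{\hat p}\|u_n\|_{\hat p}^{\hat p}$ by the \emph{same} factor $n^2$, so the quotient governing the projection onto $\mathcal P_\mu$ is unchanged along the family and $J_\mu$ of the projected points is constant, not tending to $0$; if instead $\|\nabla u_0\|_2^2\le\mu\gamma_{\hat p}\|u_0\|_{\hat p}^{\hat p}$ then no projection exists and $J_\mu(u_n)\to-\infty$, which says nothing about the infimum over $\mathcal P_\mu$. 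The correct mechanism is to vary the \emph{profile}: pick $u\in S_a$ with $\|\nabla u\|_2^2-\mu\gamma_{\hat p}\|u\|_{\hat p}^{\hat p}$ positive but arbitrarily small while $\|u\|_{2^*}$ stays bounded below (approach the Gagliardo--Nirenberg optimizers from the admissible side); then the projected energy $\frac1N(\|\nabla u\|_2^2-\mu\gamma_{\hat p}\|u\|_{\hat p}^{\hat p})^{N/2}\|u\|_{2^*}^{-2^*(N-2)/2}\to0$. Second, the inequality $m_a<\frac1N\mathcal S^{N/2}$ is precisely the content of the cited reference that allowed Wei and Wu to remove Soave's smallness condition \eqref{2312232020}; your sketch names the method but defers the decisive expansions (the normalization of the bubble to $S_a$, the competition between the $\mu$-correction and the $O(\varepsilon^{N-2})$ remainder, and the cases $N=3,4$), so as written this part is an outline rather than a proof.
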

As already mentioned, when $p=\hat{p}$, there holds $\alpha_{N, \hat{p}}=\frac{\hat{p}}{2(C_{N, \hat{p}})^{\hat{p}}}$, see the formula\cite[(3.13)]{WW22}. Note that when $\mu, a$ satisfy
\begin{equation}\label{2301291935}
\mu,~ a>0\;\;\text{if}~p\in (\hat{p}, 2^*); \;\; \mbox{or }\;\;
0<\mu a^{\frac4N}<\frac{\hat{p}}{2(C_{N, \hat{p}})^{\hat{p}}}\;\; \text{if}~p=\hat{p},
\end{equation}
the functional $J_{\mu}$ has a mountain pass geometry on $S_a$. In particular, for any $u\in S_a$, $J_{\mu}(u_h)$ has a unique maximal point on $\mathbb{R}^+$ and
\begin{equation}\label{2302152221}
m_{a}=\underset{u\in S_a}{\inf}\underset{h\in \mathbb{R}^+}{\max}J_{\mu}(u_h).
\end{equation}
Here $u_h$ is given in \eqref{2301111617}. We have the following monotonicity result.
\begin{lemma}\label{2207092032}
Assume that $N\ge 3$, $\hat{p}\le p<2^*$, $\mu, \, a$ satisfy \eqref{2301291935} and $m_a$ is defined in \eqref{22071824}. Then $m_a$ is non-increasing with respect to $a$.
\end{lemma}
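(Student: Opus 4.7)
The plan is to exploit the min-max characterization \eqref{2302152221} together with a direct mass-scaling bijection. For $0 < a_1 \le a_2$ lying in the range where \eqref{2301291935} holds, set $t := a_2/a_1 \ge 1$ and associate to each $u \in S_{a_1}$ the function $v := tu$. Since $\|v\|_2 = t a_1 = a_2$, the map $u \mapsto v$ is a bijection between $S_{a_1}$ and $S_{a_2}$, and the idea is to show that its fiber $h \mapsto J_{\mu}(v_h)$ lies nowhere above the fiber of $u$, up to a reparametrization of $(0,\infty)$.

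The key observation is the identity $v_h = t\, u_h$, which gives term-by-term
\[
J_{\mu}(v_h) = \frac{t^2 h^2}{2}\|\nabla u\|_2^2 - \frac{\mu t^{p} h^{p\gamma_p}}{p}\|u\|_p^p - \frac{t^{2^*} h^{2^*}}{2^*}\|u\|_{2^*}^{2^*}.
\]
The natural bijective change of parameter $h = h'/t$ on $(0,\infty)$ absorbs the factor $t^2$ into the kinetic term and $t^{2^*}$ into the Sobolev critical term, leaving only a modified $p$-coefficient:
\[
J_{\mu}(v_{h'/t}) = J_{\mu}(u_{h'}) - \bigl(t^{p(1-\gamma_p)} - 1\bigr)\frac{\mu (h')^{p\gamma_p}}{p}\|u\|_p^p.
\]
Since $\gamma_p < 1$ for every $p \in [\hat p, 2^*)$ and $t \ge 1$, the bracketed quantity is nonnegative, and hence $J_{\mu}(v_{h'/t}) \le J_{\mu}(u_{h'})$ for all $h' > 0$.

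Maximizing both sides over the corresponding parameter—the map $h \mapsto h' = th$ is a bijection of $(0,\infty)$, so the two maxima are attained on matched pairs—yields $\max_{h > 0} J_{\mu}(v_h) \le \max_{h' > 0} J_{\mu}(u_{h'})$. Taking the infimum over $u \in S_{a_1}$ and using the bijection $u \mapsto v$ to rewrite the left-hand side as an infimum over $v \in S_{a_2}$ gives $m_{a_2} \le m_{a_1}$. The only subtlety, in the case $p = \hat p$, is that both $a_1$ and $a_2$ must satisfy the mountain-pass condition in \eqref{2301291935}; this is automatic since $\mu a^{4/N}$ is monotone increasing in $a$, so if $a_2$ satisfies the bound then so does $a_1$. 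There is no substantive obstacle: the argument reduces to a pointwise estimate in the fiber, and the sign $t \ge 1$ together with the strict inequality $\gamma_p < 1$ supplies monotonicity at once.
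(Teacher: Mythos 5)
Your proof is correct and follows essentially the same route as the paper: both rest on the characterization \eqref{2302152221} and a mass-rescaling map into $S_{a_2}$ whose only effect on the fiber is to increase the coefficient of the $L^p$ term (since $p(1-\gamma_p)>0$). Indeed your $v=tu$ and the paper's $u_2=\theta^{(2-N)/2}u_1(\theta^{-1}\cdot)$ lie on the same dilation fiber, so after the reparametrization $h\mapsto h'/t$ the two computations coincide.
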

\begin{proof}
Let $a_2>a_1>0$, and $\theta=\frac{a_2}{a_1}>1$. For any $u_1\in S_{a_1}$, let
\[
u_2(x):=\theta^{\frac{2-N}{2}}u_1(\theta^{-1}x).
\]
By a direct computation, we have $\|\nabla u_2\|_2=\|\nabla u_1\|_2$, $\|u_2\|_2 =\theta \|u_1\|_2$, $\|u_2\|_p^p=\theta^{\frac{p(2-N)+2N}{2}} \|u_1\|_p^p$ and $\|u_2\|_{2^*} =\|u_1\|_{2^*}$. By \eqref{2302152221}, there exists some $h_{u_2}$ such that
\[
\begin{aligned}
m_{a_2} &\le \underset{h\in \mathbb{R}^+}{\max} J_{\mu}((u_2)_{h})=J_{\mu}((u_2)_{h_{u_2}})\\
&=\frac12 \|\nabla (u_1)_{h_{u_2}}\|_2^2-\frac{\mu}{p}\theta^{\frac{p(2-N)+2N}{2}}\|(u_1)_{h_{u_2}}\|_p^p-\frac{1}{2^*}\|(u_1)_{h_{u_2}}\|_{2^*}^{2^*}\\
&\le \frac12 \|\nabla (u_1)_{h_{u_2}}\|_2^2-\frac{\mu}{p}\|(u_1)_{h_{u_2}}\|_p^p-\frac{1}{2^*}\|(u_1)_{h_{u_2}}\|_{2^*}^{2^*}\\
&\le \underset{h\in \mathbb{R}^+}{\max} J_{\mu}((u_1)_{h}).\\
\end{aligned}
\]
By the definition of $m_{a_1}$, we have  $m_{a_1}\ge m_{a_2}$.
\end{proof}
In the sequel, let $P_\mu$ be given in \eqref{2306281307}.
We have the following lemma.
\begin{lemma}\label{22070919}
Assume that $N\ge 3$, $\hat{p}\le p<2^*$ and $\mu,\, a$ satisfy \eqref{2301291935}. Let $\{u_n\}\subset S_{a}$ be a bounded sequence in $H^1(\mathbb{R}^N)$ such that $P_{\mu}(u_n)\rightarrow 0$. If $\|\nabla u_n\|_2^2>c > 0$, then
\begin{equation}\label{2207272338}
m_a\le \underset{n\rightarrow \infty}{\lim\inf}J_{\mu}(u_n).
\end{equation}
\end{lemma}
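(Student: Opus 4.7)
My plan is to reduce each $u_n$ to a point on the Pohozaev manifold $\mathcal{P}_\mu$ via the scaling \eqref{2301111617}, use the definition \eqref{22071824} of $m_a$ there, and then show that the required scaling parameter tends to $1$. Concretely, under the assumption \eqref{2301291935} the fiber $\Psi_n(h):=J_\mu((u_n)_h)$ has, for each $n$, a unique maximizer $h_n\in\mathbb{R}^+$ (this is the content of \eqref{2302152221}). Since $h\,\Psi_n'(h)=P_\mu((u_n)_h)$ for $h>0$, criticality of $h_n$ is equivalent to $(u_n)_{h_n}\in\mathcal{P}_\mu$. The $L^2$ norm is preserved by the scaling, so $(u_n)_{h_n}\in S_a\cap\mathcal{P}_\mu$, and thus $J_\mu((u_n)_{h_n})\ge m_a$.

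The main step is to prove $h_n\to 1$. Pass to a subsequence and set
\[
A=\lim\|\nabla u_n\|_2^2,\qquad B=\lim\|u_n\|_p^p,\qquad C=\lim\|u_n\|_{2^*}^{2^*},
\]
so $A\ge c>0$, $B,C\ge 0$, and $P_\mu(u_n)\to 0$ gives $A=\mu\gamma_p B+C$. The equation $\Psi_n'(h_n)=0$ reads
\[
\|\nabla u_n\|_2^2=\mu\gamma_p\, h_n^{p\gamma_p-2}\|u_n\|_p^p+h_n^{2^*-2}\|u_n\|_{2^*}^{2^*}.
\]
When $p\in(\hat p,2^*)$, both exponents $p\gamma_p-2$ and $2^*-2$ are strictly positive; ruling out $h_n\to 0$ and $h_n\to\infty$ by inspecting the leading powers of $h_n$ forces $h_n$ to stay in a compact subset of $(0,\infty)$. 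Passing to the limit yields $\mu\gamma_p B(1-h_\infty^{p\gamma_p-2})=C(h_\infty^{2^*-2}-1)$, and since the two sides have opposite signs whenever $h_\infty\ne 1$ while $A=\mu\gamma_p B+C>0$ excludes $B=C=0$, we must have $h_\infty=1$. When $p=\hat p$ one has $p\gamma_p=2$ and, by \eqref{22071819} together with the constraint $\mu a^{4/N}<\hat p/(2C_{N,\hat p}^{\hat p})$, there is $\theta<1$ with $\mu\gamma_{\hat p}\|u_n\|_{\hat p}^{\hat p}\le\theta\|\nabla u_n\|_2^2$; combined with $P_\mu(u_n)\to 0$ this gives $\|u_n\|_{2^*}^{2^*}\ge(1-\theta)c-o(1)$, and the criticality equation collapses to $h_n^{2^*-2}\|u_n\|_{2^*}^{2^*}=\|u_n\|_{2^*}^{2^*}+o(1)$, forcing $h_n\to 1$.

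Once $h_n\to 1$ is established, I conclude via a second-order Taylor expansion of $\Psi_n$ at $h_n$: since $\Psi_n'(h_n)=0$ and $\Psi_n''$ is uniformly bounded on a neighbourhood of $1$ (the coefficients are controlled by the $H^1$- and $L^{2^*}$-bounds on $u_n$), we obtain
\[
J_\mu(u_n)=\Psi_n(1)=\Psi_n(h_n)+O\bigl((1-h_n)^2\bigr)=J_\mu((u_n)_{h_n})+o(1)\ge m_a+o(1),
\]
and taking $\liminf$ yields \eqref{2207272338}.

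The main obstacle is the delicate case $p=\hat p$, where $p\gamma_p=2$ degenerates the argument that pins $h_n$: here the quantitative use of the Gagliardo--Nirenberg inequality, in the form of the explicit bound $\alpha_{N,\hat p}=\hat p/(2C_{N,\hat p}^{\hat p})$ recorded after Lemma \ref{2301312159}, is essential to keep $\|\nabla u_n\|_2^2-\mu\gamma_{\hat p}\|u_n\|_{\hat p}^{\hat p}$ (hence $\|u_n\|_{2^*}^{2^*}$) bounded away from zero and thereby recover $h_n\to 1$.
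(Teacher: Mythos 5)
Your argument is correct and follows essentially the same route as the paper: rescale $u_n$ onto the Pohozaev manifold $\mathcal{P}_{\mu}$, show that the scaling parameters $h_n\to 1$ by comparing the criticality equation with $P_{\mu}(u_n)\to 0$, and conclude from $J_{\mu}(u_n)=J_{\mu}((u_n)_{h_n})+o_n(1)\ge m_a+o_n(1)$. Your handling of the step $h_n\to 1$ is in fact more detailed than the paper's, especially in the degenerate case $p=\hat{p}$ (where $p\gamma_p=2$ kills the subcritical term and a positive lower bound on $\|u_n\|_{2^*}^{2^*}$ is genuinely needed); the paper simply asserts that bound in one line.
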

\begin{proof}
Since $P_{\mu}(u_n)\rightarrow 0$, we have \begin{equation}\label{22070917}
\|\nabla u_n\|_2^2-\mu\gamma_p\|u_n\|_p^p-\|u_n\|_{2^*}^{2^*}=o_n(1).
\end{equation}
There exists a sequence $h_n>0$ such that $(u_n)_{h_n}\in \mathcal{P}_{\mu}$, that is,
\[
\|\nabla (u_n)_{h_n}\|_2^2-\mu\gamma_p\|(u_n)_{h_n}\|_p^p-\|(u_n)_{h_n}\|_{2^*}^{2^*}=0,
\]
where $(u_n)_{h_n}$ is given in \eqref{2301111617}. By direct computation,
\begin{equation}\label{2207091725}
\begin{aligned}
h_n^2\|\nabla u_n\|_2^2-\mu\gamma_p  h_n^{p\gamma_p}\|u_n\|_p^p-h_n^{2^*}\|u_n\|_{2^*}^{2^*}=0.\\
\end{aligned}
\end{equation}
Combining \eqref{22070917} with \eqref{2207091725}, we obtain
\[
\mu\gamma_p ( h_n^{p\gamma_p-2}-1)\|u_n\|_p^p+(h_n^{2^*-2}-1)\|u_n\|_{2^*}^{2^*}=o_n(1).
\]
From $\|\nabla u_n\|_2^2>c > 0$ and \eqref{22070917}, it follows that $\{\|u_n\|_{2^*}^{2^*}\}$ has a positive lower bound, which deduces that $h_n\rightarrow 1$. So
\[
J_{\mu}(u_n)=J_{\mu}((u_n)_{h_n})+o_n(1)\ge m_a + o_n(1).
\]
Thus \eqref{2207272338} holds.
\end{proof}
Now, we claim a non-existence result of solutions to \eqref{22062610}, which yields the sign of the Lagrange multiplier $\lambda$ in \eqref{041003}.
\begin{lemma}\label{22062613}
Suppose that $N\ge 3$, $p\in (2, 2^*)$, and $V\in C^1(\mathbb{R}^N)$ satisfies \eqref{2301262110}, $(\widetilde{V}_2)$, $(V_4)$, then the equation \eqref{22062610}
has no nontrivial solution provided $\lambda\ge 0$.
\end{lemma}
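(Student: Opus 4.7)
The plan is to derive a contradiction by combining two standard identities that any $H^1$-solution $u$ of \eqref{22062610} must satisfy, and then exploit $(V_4)$ to see that the two sides of the resulting equation have opposite signs.

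First I would test \eqref{22062610} against $u$ itself to obtain
\begin{equation*}
\|\nabla u\|_2^2 + \int_{\mathbb{R}^N} V(x)\, u^2 \ud x = \lambda \|u\|_2^2 + \mu \|u\|_p^p + \|u\|_{2^*}^{2^*},
\end{equation*}
the integral $\int V u^2$ being well-defined thanks to \eqref{2301262110}. Next, since $V\in C^1(\mathbb{R}^N)$ and \eqref{2301262110}, \eqref{2301271518} hold, Proposition \ref{2302171335} gives the Pohozaev identity $\widetilde{P}_{\mu,V}(u)=0$, namely
\begin{equation*}
\|\nabla u\|_2^2 - \int_{\mathbb{R}^N} W(x)\, u^2 \ud x = \mu \gamma_p \|u\|_p^p + \|u\|_{2^*}^{2^*}.
\end{equation*}
Subtracting the second identity from the first cancels both $\|\nabla u\|_2^2$ and the critical term $\|u\|_{2^*}^{2^*}$, yielding the clean relation
\begin{equation*}
\int_{\mathbb{R}^N} \bigl(V(x)+W(x)\bigr)\, u^2 \ud x = \lambda \|u\|_2^2 + \mu (1-\gamma_p) \|u\|_p^p.
\end{equation*}

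Now I would read off the sign contradiction. By hypothesis $(V_4)$ (recall $V_\infty=0$) the left-hand side is nonpositive. On the other hand, for $p \in (2, 2^*)$ one has $\gamma_p = \frac{N(p-2)}{2p} < 1$, so $1-\gamma_p > 0$; combined with $\lambda \ge 0$, $\mu > 0$, and $\|u\|_p^p > 0$ for every nontrivial $u \in H^1(\mathbb{R}^N)$ (by the Sobolev embedding $H^1 \hookrightarrow L^p$), the right-hand side is strictly positive. This forces $u \equiv 0$, contradicting the assumed nontriviality.

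I do not expect any genuine analytical obstacle: once the two identities are in place, the proof reduces to a two-line sign comparison, and the hypotheses \eqref{2301262110}, $(\widetilde{V}_2)$, $(V_4)$ have been tailored precisely so that the identities are valid in the form used and the signs align as above. The only delicate point worth double-checking is that Proposition \ref{2302171335} indeed applies to produce the $\widetilde{P}_{\mu,V}$ version of the Pohozaev identity under just $V\in C^1(\mathbb{R}^N)$ together with \eqref{2301262110} and \eqref{2301271518}, but this is exactly the content of the framework set up earlier in the paper.
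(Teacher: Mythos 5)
Your proposal is correct and follows essentially the same route as the paper's proof: test against $u$, invoke Proposition \ref{2302171335} for $\widetilde{P}_{\mu,V}(u)=0$, subtract to obtain $\int_{\mathbb{R}^N}(V+W)u^2\,\ud x=\lambda\|u\|_2^2+\mu(1-\gamma_p)\|u\|_p^p$, and conclude by the sign comparison via $(V_4)$ and $\gamma_p<1$. No issues.
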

\begin{proof}
Assume that $u$ is a nontrivial solution to \eqref{22062610}. Multiplying \eqref{22062610} by $u$, we obtain
\begin{equation}\label{22061001}
\|\nabla u\|_2^2+\int_{\mathbb{R}^N} V(x)u^2 \ud x=\lambda\|u\|_2^2+\mu \|u\|_p^p+\|u\|_{2^*}^{2^*}.
\end{equation}
%The energy functional corresponding to \eqref{22062610} is
%\begin{equation}
%E(u)=\frac12 \|\nabla u\|_2^2+\frac12 \int_{\mathbb{R}^N} V(x)u^2 \ud x-\frac{\lambda}{2}\|u\|_2^2-\frac{\mu}{p} \|u\|_p^p-\frac{1}{2^*}\|u\|_{2^*}^{2^*}.
%\end{equation}
%Denote
%\begin{equation*}
%\begin{aligned}
%  \psi_u(h)&=E(u_h)\\
%  &=\frac{h^2}{2}\|\nabla u\|_2^2
%+\frac12 \int_{\mathbb{R}^N} V(h^{-1}x)u^2 \ud x-\frac{\lambda}2 \|u\|_2^2-\frac{\mu h^{p\gamma_p }}{p}\|u\|_p^p
%-\frac{h^{2^*}}{2^{*}}\|u\|_{2^*}^{2^*}.
%\end{aligned}
%\end{equation*}
By Proposition \ref{2302171335}, the Pohozaev identity \eqref{2306261441} holds. Combining \eqref{22061001} with \eqref{2306261441}, we have
\begin{equation}
\int_{\mathbb{R}^N} (V(x)+W(x))u^2 \ud x=\mu (1-\gamma_p) \|u\|_p^p+\lambda\|u\|^2_2.
\end{equation}
From $(V_4)$ and $\gamma_p<1$, it follows that $\lambda<0$.
\end{proof}
Next, we are going to prove that under suitable condition, the functional $J_{\mu,\, V}$ for any solution to \eqref{22062610} is nonnegative.
\begin{lemma}\label{22072415}
Suppose that $N\ge 3$, $p\in [\hat{p}, 2^*)$, and $V\in L_{loc}^{\frac{N}{2}}(\mathbb{R}^N)$ satisfies $(V_1)$ and $(V_3)$ with
\begin{itemize}
\item[$(i)$] \text{either} $p=\hat{p}$,
\begin{equation}\label{22072321132}
{\sigma_3} <  \frac2{N-2}\left(1-\frac{2\mu}{\hat{p}}(C_{N, \hat{p}})^{\hat{p}}a^{\frac4N}\right)\;\; \text{if}~N\ge 4;
\end{equation}
\begin{equation}\label{22072321133}
 3\sigma_1 + 2\sigma_3 < 4\left(1-\frac{2\mu}{\hat{p}}(C_{3,\hat{p}})^{\hat{p}} a^{\frac43}\right)\;\; \text{if}~N=3;
\end{equation}
\item[$(ii)$] \text{or} $p\in (\hat{p}, 2^*)$,
\begin{equation}\label{2207232113}
{\sigma_3}< \frac{p\gamma_p}{2}-1\;\; \text{if}~N\ge 4;
\end{equation}
\end{itemize}
\begin{equation}\label{22072321131}
(p\gamma_p-3) \sigma_1 + 2 \sigma_3 < p\gamma_p - 2\;\; \text{if}~N=3,\, 4<p<6.
\end{equation}
Then any solution to \eqref{22062610} satisfies $J_{\mu, V}(u)\ge 0$.
\end{lemma}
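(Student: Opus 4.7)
The plan is to use the Pohozaev identity $P_{\mu,V}(u)=0$ (from \eqref{2302071001}) to rewrite $J_{\mu,V}(u)$ as a combination of $\|\nabla u\|_2^2$, the two potential integrals, and a residual nonlinear term whose sign can be controlled. The standard trick is to consider $J_{\mu,V}(u)=J_{\mu,V}(u)-\alpha P_{\mu,V}(u)$ for a carefully chosen multiplier $\alpha$, killing either $\|u\|_p^p$ or $\|u\|_{2^*}^{2^*}$ so that the remaining pieces are either manifestly nonnegative or bounded in terms of $\|\nabla u\|_2^2$ via $(V_1)$, $(V_3)$, Hardy and Gagliardo--Nirenberg.

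For case (i), $p=\hat p$ (so $\hat p\gamma_{\hat p}=2$), I would take $\alpha=1/2^*$ to eliminate the critical term. Using the identities $\tfrac12-\tfrac{1}{2^*}=\tfrac1N$, $\tfrac12-\tfrac{N}{2\cdot 2^*}=\tfrac{4-N}{4}$ and $\tfrac{1}{\hat p}-\tfrac{\gamma_{\hat p}}{2^*}=\tfrac{1}{N+2}$, this produces
\[
J_{\mu,V}(u)=\frac{1}{N}\|\nabla u\|_2^2+\frac{4-N}{4}\!\int V u^2\,\ud x-\frac{1}{2^*}\!\int Vu\,\nabla u\cdot x\,\ud x-\frac{\mu}{N+2}\|u\|_{\hat p}^{\hat p}.
\]
Gagliardo--Nirenberg \eqref{22071819} gives $\tfrac{\mu}{N+2}\|u\|_{\hat p}^{\hat p}\le \tfrac{\kappa}{N}\|\nabla u\|_2^2$ with $\kappa=\tfrac{2\mu}{\hat p}(C_{N,\hat p})^{\hat p}a^{4/N}$, so the dominant positive term is $\tfrac{1-\kappa}{N}\|\nabla u\|_2^2$. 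Cauchy--Schwarz together with $(V_3)$ bounds $|\!\int Vu\,\nabla u\cdot x|\le\sigma_3\|\nabla u\|_2^2$, and for $\int Vu^2$ one uses either $(V_1)$ directly (giving $\sigma_1\|\nabla u\|_2^2$) or Hardy via $(V_3)$ and Remark~\ref{2304061410} (giving $\tfrac{2\sigma_3}{N-2}\|\nabla u\|_2^2$). Collecting everything, the sign of the coefficient $\tfrac{4-N}{4}$ forces three sub-cases: for $N=3$ use $(V_1)$ to land on $\tfrac{A}{12}[4(1-\kappa)-3\sigma_1-2\sigma_3]\ge 0$, which is \eqref{22072321133}; for $N=4$ the $\int Vu^2$ coefficient vanishes and we read off $\sigma_3\le 1-\kappa$, which is \eqref{22072321132} at $N=4$; for $N\ge 5$ route the $\int Vu^2$ term through Hardy so that only $\sigma_3$ appears, recovering \eqref{22072321132}.

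For case (ii), $p\in(\hat p,2^*)$, I would instead take $\alpha=1/(p\gamma_p)$ to kill the subcritical nonlinearity. Because $p\gamma_p<2^*$, the $\|u\|_{2^*}^{2^*}$ coefficient $\tfrac{2^*-p\gamma_p}{p\gamma_p\cdot 2^*}$ is positive, so the critical term may be discarded, leaving
\[
J_{\mu,V}(u)\ge \frac{p\gamma_p-2}{2p\gamma_p}\|\nabla u\|_2^2+\frac{p\gamma_p-N}{2p\gamma_p}\!\int V u^2\,\ud x-\frac{1}{p\gamma_p}\!\int Vu\,\nabla u\cdot x\,\ud x.
\]
For $N\ge 4$ one has $p\gamma_p<N$ and (as in case (i)) one bounds the two potential integrals; the leading dissipative coefficient $(p\gamma_p-2)/2$ then dominates, and one obtains \eqref{2207232113}. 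For $N=3$, $4<p<6$, an analogous direct estimate with $(V_1)$ gives \eqref{22072321131}.

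The main obstacle is the bookkeeping: the two cases require different multipliers $\alpha$, and for each case the dimensional sign of the $\int Vu^2$ coefficient dictates whether one bounds $B$ via $(V_1)$ (when its coefficient is positive) or via Hardy and $(V_3)$ (when it is negative and we want the constraint expressed only in $\sigma_3$). For $p=\hat p$ the Gagliardo--Nirenberg step is essential to absorb $\|u\|_{\hat p}^{\hat p}$ into $\|\nabla u\|_2^2$, which is why $\mu a^{4/N}$ enters through $\kappa$; for $p>\hat p$ this mass-subcritical absorption is automatic, so the condition is independent of $\mu,a$.
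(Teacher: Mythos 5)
Your strategy is the same as the paper's: use the Pohozaev identity $P_{\mu,V}(u)=0$ as a multiplier, with $\alpha=1/2^*$ for $p=\hat p$ (killing the critical term and absorbing $\|u\|_{\hat p}^{\hat p}$ via Gagliardo--Nirenberg, which is where $\kappa=\frac{2\mu}{\hat p}(C_{N,\hat p})^{\hat p}a^{4/N}$ enters) and $\alpha=1/(p\gamma_p)$ for $p>\hat p$ (discarding the now-nonnegative critical remainder), then controlling $\int Vu\,\nabla u\cdot x$ by $\sigma_3\|\nabla u\|_2^2$ via Cauchy--Schwarz and $(V_3)$. Your coefficient computations ($\tfrac1N$, $\tfrac{4-N}{4}$, $\tfrac{1}{N+2}$, $\tfrac{\kappa}{N}$, and the case-(ii) coefficients) all check out, and your $N=3$ subcases reproduce \eqref{22072321133} and \eqref{22072321131} exactly as the paper does.

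There is, however, one step where what you describe would not deliver the stated constants. Whenever the coefficient of $B:=\int_{\mathbb{R}^N}Vu^2\,\ud x$ is nonpositive --- i.e.\ $N\ge 4$ in case (i) and $N\ge 4$ in case (ii), since then $\tfrac{4-N}{4}\le 0$, resp.\ $p\gamma_p<N$ --- the correct move is to \emph{discard} that term outright: $(V_1)$ with the normalization $V_\infty=0$ gives $V\le 0$, hence $B\le 0$, so a nonpositive coefficient times $B$ is nonnegative. You instead propose to bound $|B|$ (via Hardy/$(V_3)$, or $(V_1)$) and subtract. That is lossy: for instance in case (i) with $N\ge 5$, bounding $|B|\le\tfrac{2\sigma_3}{N-2}\|\nabla u\|_2^2$ leads to the requirement $\sigma_3\le\frac{(N-2)(1-\kappa)}{N^2-4N+2}$, which for $N\ge 5$ is strictly smaller than the threshold $\frac{2(1-\kappa)}{N-2}$ of \eqref{22072321132} (e.g.\ $\tfrac{3}{7}(1-\kappa)$ versus $\tfrac{2}{3}(1-\kappa)$ at $N=5$); the analogous loss occurs for \eqref{2207232113}. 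So as written your argument only proves the lemma under a more restrictive hypothesis on $\sigma_3$ in those subcases. The fix is immediate and is exactly what the paper does: drop the $B$-term by the sign argument when its coefficient is $\le 0$, and bound it by $\sigma_1\|\nabla u\|_2^2$ only when its coefficient is positive ($N=3$).
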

\begin{proof}
Since $u$ is a solution to \eqref{22062610}, by Proposition \ref{2302171335}, $u$ satisfies the Pohozaev identity \eqref{2306261440}.
First, suppose $p\in (2+\frac{4}{N}, 2^*)$. Taking \eqref{2306261440} into $J_{\mu, V}$, we get
\[
\begin{aligned}
J_{\mu, V}(u)&=\frac12 \|\nabla u\|_2^2+\frac12 \int_{\mathbb{R}^N} V(x)u^2 \ud x-\frac{\mu}{p} \|u\|_p^p-\frac{1}{2^*}\|u\|_{2^*}^{2^*}\\
&\ge \frac12 \|\nabla u\|_2^2+\frac12 \int_{\mathbb{R}^N} V(x)u^2 \ud x-\frac{1}{p\gamma_p}\left(\mu \gamma_p \|u\|_p^p+\|u\|_{2^*}^{2^*}\right)\\
&= \left(\frac12-\frac{1}{p\gamma_p}\right) \|\nabla u\|_2^2+\left(\frac12-\frac{N}{2p\gamma_p}\right) \int_{\mathbb{R}^N} V(x)u^2 \ud x-\frac{1}{p\gamma_p}\int_{\mathbb{R}^N} V(x)u\nabla u\cdot x \ud x \\
\end{aligned}
\]
If $N\ge 4$ (hence $N\ge p\gamma_p$ for all $p \geq \hat p$), by H\"{o}lder inequality, $(V_3)$, $V(x)\le 0$ and \eqref{2207232113}, we have
\[
\begin{aligned}
J_{\mu, V}(u)\ge \left(\frac12-\frac{1}{p\gamma_p}\right) \|\nabla u\|_2^2-\frac{1}{p\gamma_p} {\sigma_3}\|\nabla u\|^2_2 \ge 0.
\end{aligned}
\]
If $N=3$, $4<p<6$,  we get $N< p\gamma_p$. Using H\"{o}lder inequality, $(V_1)$, $(V_3)$ and \eqref{22072321131}, there holds
\[
\begin{aligned}
J_{\mu, V}(u)\ge \left(\frac12-\frac{1}{p\gamma_p}\right) \|\nabla u\|_2^2-\left(\frac12-\frac{3}{2p\gamma_p}\right) \sigma_1\|\nabla u\|_{2}^2-\frac{1}{p\gamma_p} {\sigma_3}\|\nabla u\|^2_2 \ge 0.
\end{aligned}
\]
Suppose now $p=\hat{p}$. Similarly as above, taking \eqref{2306261440} into $J_{\mu, V}$ and eliminating the term $\|u\|_{2^*}^{2^*}$, we get
\[
\begin{aligned}
J_{\mu, V}(u)= &\left( \frac12- \frac1{2^*}\right)\|\nabla u\|_2^2+\frac12\left(1 - \frac{N}{2^*}\right)\int_{\mathbb{R}^N} V(x)u^2 \ud x-\left(\frac{1}{{\hat{p}}} -\frac{\gamma_{\hat{p}}}{2^*}\right)\mu\|u\|_{\hat{p}}^{\hat{p}}\\
&-\frac1{2^*}\int_{\mathbb{R}^N} V(x)u\nabla u\cdot x \ud x \\
\ge& \left[\frac12- \frac1{2^*}-\left(\frac{1}{{\hat{p}}} -\frac{\gamma_{\hat{p}}}{2^*}\right)\mu (C_{N,\, {\hat{p}}})^{{\hat{p}}}a^{\frac4N}-\frac{{\sigma_3}}{2^*}\right]\|\nabla u\|_2^2+\frac12\left(1 - \frac{N}{2^*}\right)\int_{\mathbb{R}^N} V(x)u^2 \ud x
\end{aligned}
\]
If $N\ge 4$ (so $N\ge 2^*$), by $V(x)\le 0$ and \eqref{22072321132}, we get
\[
J_{\mu, V}(u)\ge \frac1{2^*}\left[ \frac2{N-2}\left(1-\mu \gamma_{\hat{p}}(C_{N, {\hat{p}}})^{{\hat{p}}}a^{\frac4N}\right)- \sigma_3\right]\|\nabla u\|_2^2\ge 0.
\]
If $N=3$, then $N<2^*$. By $(V_1)$ and \eqref{22072321133}, there holds
\[
J_{\mu, V}(u)\ge \frac12\left[ \frac2{3}\left(1-\mu \gamma_{\hat{p}}(C_{3, \hat{p}})^{\hat{p}} a^{\frac43}\right) -\frac{1}{3}{\sigma_3}- \frac{1}{2}\sigma_1\right]\|\nabla u\|_2^2\ge 0.
\]
The proof is completed.
\end{proof}
\begin{lemma}\label{2207241537}
Suppose that $N\ge 3$, $p\in [\hat{p}, 2^*)$ and $V\in C^1(\mathbb{R}^N)$ satisfies $(V_1)$, $(V_2)$ with
\begin{itemize}
\item[$(i)$] either $p=\hat{p}$,
\begin{equation}\label{22072321141}
\sigma_2 < \frac{1}{N}\left(1-\frac{2\mu}{\hat{p}}(C_{N, \hat{p}})^{\hat{p}}a^{\frac4N}\right);
\end{equation}
\item[$(ii)$] or $p\in (\hat{p}, 2^*)$,
\begin{equation}\label{2207232114}
\sigma_2 < \frac12-\frac{1}{p\gamma_p}.
\end{equation}
\end{itemize}
If $u$ is a solution to \eqref{22062610}, then $J_{\mu, V}(u)\ge 0$.
\end{lemma}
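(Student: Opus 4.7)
The argument mirrors that of Lemma~\ref{22072415}, but with the Pohozaev identity $\widetilde{P}_{\mu,V}(u)=0$ in place of $P_{\mu,V}(u)=0$. I would first observe that under $(V_1)$ and $(V_2)$, Proposition~\ref{2302171335} still applies: combining the two bounds by the triangle inequality controls $\int Wu^2$ so that $(\widetilde{V}_2)$ holds, and hence \eqref{2306261441} is available.

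\textbf{Case (ii), $p\in(\hat p,2^{*})$.} Form the combination
\[
J_{\mu,V}(u)=J_{\mu,V}(u)-\tfrac{1}{p\gamma_p}\widetilde{P}_{\mu,V}(u)=\bigl(\tfrac12-\tfrac{1}{p\gamma_p}\bigr)\|\nabla u\|_2^2+\int_{\mathbb{R}^N}\!\bigl[\tfrac{V}{2}+\tfrac{W}{p\gamma_p}\bigr]u^2\,\mathrm dx+\bigl(\tfrac{1}{p\gamma_p}-\tfrac{1}{2^{*}}\bigr)\|u\|_{2^{*}}^{2^{*}},
\]
chosen so that the $\|u\|_p^p$ contributions cancel exactly. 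Since $p<2^{*}$ forces $p\gamma_p<2^{*}$, the last term is nonnegative, and the middle term is precisely the quantity estimated by $(V_2)$. Dropping the critical term and applying $(V_2)$ give
\[
J_{\mu,V}(u)\ge\bigl(\tfrac12-\tfrac{1}{p\gamma_p}-\sigma_2\bigr)\|\nabla u\|_2^2\ge 0,
\]
which is exactly what \eqref{2207232114} is engineered to guarantee.

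\textbf{Case (i), $p=\hat p$.} Since $\frac{1}{p\gamma_p}=\frac12$, the $\|\nabla u\|_2^2$ coefficient above vanishes and the previous strategy is useless on its own. I would instead form the analogue of the expression obtained in the $p=\hat p$ portion of Lemma~\ref{22072415},
\[
J_{\mu,V}(u)=J_{\mu,V}(u)-\tfrac{1}{2^{*}}\widetilde{P}_{\mu,V}(u)=\tfrac1N\|\nabla u\|_2^2+\int_{\mathbb{R}^N}\!\bigl[\tfrac{V}{2}+\tfrac{W}{2^{*}}\bigr]u^2\,\mathrm dx-\tfrac{2\mu}{N\hat p}\|u\|_{\hat p}^{\hat p},
\]
obtained by cancelling the $\|u\|_{2^{*}}^{2^{*}}$ term. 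The last summand is controlled by the mass-critical Gagliardo--Nirenberg inequality, $\|u\|_{\hat p}^{\hat p}\le(C_{N,\hat p})^{\hat p}a^{4/N}\|\nabla u\|_2^2$, yielding the factor $\frac{A}{N}$ with $A=\frac{2\mu}{\hat p}(C_{N,\hat p})^{\hat p}a^{4/N}$. For the potential integral I would use the linear split
\[
\tfrac{V}{2}+\tfrac{W}{2^{*}}=\tfrac{N-2}{N}\bigl(\tfrac{V}{2}+\tfrac{W}{2}\bigr)+\tfrac{1}{N}V,
\]
estimating the first parenthesis by $(V_2)$ (which, at $p=\hat p$, controls exactly $\frac12(V+W)$) and the residual $\frac{1}{N}\int Vu^2$ through $(V_1)$ together with the sign $V\le 0$ (valid because $V_\infty=0$ by Remark~\ref{2304121417}). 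Assembling the estimates leads to a lower bound of the form
\[
J_{\mu,V}(u)\ge\tfrac{1}{N}\bigl[\,1-A-(N-2)\sigma_2-\text{(residual depending on }\sigma_1)\,\bigr]\|\nabla u\|_2^2,
\]
which is nonnegative under \eqref{22072321141} together with the overarching assumption $\sigma_1<1$ from Theorem~\ref{2207241248}.

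\textbf{Main obstacle.} Case (i) is the delicate one: at the mass-critical exponent the natural ``eliminate $\|u\|_p^p$'' trick produces no coefficient on $\|\nabla u\|_2^2$, so one is forced to eliminate $\|u\|_{2^{*}}^{2^{*}}$ instead and handle the mixed integrand $\frac{V}{2}+\frac{W}{2^{*}}$ that does not coincide with what $(V_2)$ controls. Reading off the precise constant in \eqref{22072321141} thus depends on choosing the right weighted decomposition of that integrand and carefully dispatching the residual pieces via $V\le 0$ and the Gagliardo--Nirenberg bound, just as the dimension-dependent bookkeeping in Lemma~\ref{22072415} handles $N=3$ versus $N\ge 4$.
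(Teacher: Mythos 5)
Your Case (ii) coincides with the paper's argument essentially line for line: substitute $\widetilde P_{\mu,V}(u)=0$ after the elementary inequality $\frac{1}{2^*}\le\frac{1}{p\gamma_p}$, and the resulting integrand $\frac{V}{2}+\frac{W}{p\gamma_p}$ is exactly the quantity that $(V_2)$ controls. Your preliminary remark that $(V_1)$ and $(V_2)$ together imply $(\widetilde V_2)$, so that Proposition \ref{2302171335} makes \eqref{2306261441} available, is correct and in fact supplies a detail the paper leaves implicit.

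The gap is in Case (i). After eliminating $\|u\|_{2^*}^{2^*}$ and applying the Gagliardo--Nirenberg inequality you arrive, correctly, at
\begin{equation*}
J_{\mu,V}(u)\ \ge\ \frac1N\left(1-A\right)\|\nabla u\|_2^2+\int_{\mathbb{R}^N}\left[\frac{V}{2}+\frac{W}{2^*}\right]u^2\,\ud x,\qquad A:=\frac{2\mu}{\hat p}(C_{N,\hat p})^{\hat p}a^{4/N},
\end{equation*}
and your decomposition $\frac{V}{2}+\frac{W}{2^*}=\frac{N-2}{N}\left(\frac{V}{2}+\frac{W}{2}\right)+\frac{V}{N}$ is algebraically right. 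But the residual $\frac1N\int_{\mathbb{R}^N}Vu^2\,\ud x$ must be bounded \emph{below}; the sign $V\le 0$ is useless for that, and $(V_1)$ only yields $-\frac{\sigma_1}{N}\|\nabla u\|_2^2$. Your final coefficient is therefore $\frac1N\left[1-A-(N-2)\sigma_2-\sigma_1\right]$, and the claim that this is nonnegative under \eqref{22072321141} together with $\sigma_1<1$ is false: \eqref{22072321141} only guarantees $1-A>N\sigma_2$, so the bracket is only known to exceed $2\sigma_2-\sigma_1$, and it is genuinely negative for admissible data --- take $A$ close to $0$, $\sigma_2=0.9/N$ and $\sigma_1=0.9$, which gives $1-A-(N-2)\sigma_2-\sigma_1\approx -0.8+1.8/N<0$ for every $N\ge 3$. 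A hypothesis of size $\sigma_1<1$ cannot absorb a residual of size $\sigma_1/N$ against a margin of size $\sigma_2$. For comparison, the paper concludes Case (i) with the coefficient $\frac1N(1-A)-\sigma_2$, i.e. it asserts the stronger estimate $\int\left[\frac V2+\frac{W}{2^*}\right]u^2\,\ud x\ge-\sigma_2\|\nabla u\|_2^2$ directly from $(V_1)$, $(V_2)$; to close your argument you would need to justify an estimate of that strength (or add a hypothesis such as $\sigma_1\le 2\sigma_2$), not merely invoke $\sigma_1<1$.
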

\begin{proof}
Since $u$ is a solution to \eqref{22062610}, by Proposition \ref{2302171335}, $u$ satisfies the Pohozaev identity \eqref{2306261441}.
Suppose $p\in (\hat{p}, 2^*)$. Taking \eqref{2306261441} into $J_{\mu, V}$ and using $(V_1)$, $(V_2)$ and \eqref{2207232114}, we have
\[
\begin{aligned}
J_{\mu, V}(u)&\ge \frac{1}{2}\|\nabla u\|_2^2
+\frac12 \int_{\mathbb{R}^N} V(x)u^2 \ud x-\frac{1}{p\gamma_p}\left(\mu \gamma_p \|u\|_p^p+\|u\|_{2^*}^{2^*}\right)\\
&= \frac{1}{2}\|\nabla u\|_2^2
+\frac12 \int_{\mathbb{R}^N} V(x)u^2 \ud x-\frac{1}{p\gamma_p}\left( \|\nabla u\|_2^2-\int_{\mathbb{R}^N} W(x)u^2 \ud x \right)\\
&= \left(\frac{1}{2}-\frac{1}{p\gamma_p}\right)\|\nabla u\|_2^2
+\frac12 \int_{\mathbb{R}^N} V(x)u^2 \ud x+\frac{1}{p\gamma_p}\int_{\mathbb{R}^N} W(x)u^2 \ud x\\
&\ge \left(\frac{1}{2}-\frac{1}{p\gamma_p}-\sigma_2\right)\|\nabla u\|_2^2\\
&\ge 0.
\end{aligned}
\]
If now $p=\hat{p}$. Taking \eqref{2306261441} into $J_{\mu, V}$ and eliminating the term $\|u\|_{2^*}^{2^*}$ and by $(V_1)$, $(V_2)$, \eqref{22072321141}, there holds
\[
\begin{aligned}
J_{\mu, V}(u)&= \left(\frac{1}{2}-\frac1{2^*}\right)\|\nabla u\|_2^2
+\frac12 \int_{\mathbb{R}^N} V(x)u^2 \ud x+\frac1{2^*} \int_{\mathbb{R}^N} W(x)u^2 \ud x-\left(\frac{1}{p} -\frac{\gamma_{\hat{p}}}{2^*}\right)\mu\|u\|_{\hat{p}}^{\hat{p}}
\\
&\ge \left( \frac12\left(1-\frac2{2^*}\right)\left(1-\mu \gamma_{\hat{p}}C_{N,\, {\hat{p}}}^{{\hat{p}}}a^{\frac4N}\right)-\sigma_2\right)\|\nabla u\|_2^2\\
&\ge 0.
\end{aligned}
\]
The proof is completed.
\end{proof}
To find a Palais-Smale sequence approaching Pohozaev manifold, the following frame will be often used, which is a special case of \cite[Theorem 4.5]{Ghoussoub1993} (see also \cite[Lemma 3.1]{BMRV2021}):
\begin{proposition}\label{22072013}
Let $X$ be a Hilbert manifold and $J\in C^1(X,\mathbb{R})$. Let $K\subset X$ be compact and consider a family
\[
\mathcal{E}\subset \{E\subset X: E ~\text{is}~ \text{compact},~K\subset E \}
\]
which is invariant with respect to all deformations leaving $K$ fixed. Assume that
\[
\underset{u\in K}{\max}J(u)<c:=\underset{E\in \mathcal{E}}{\inf}\underset{u\in E}{\max}J(u)\in \mathbb{R}.
\]
Let $\sigma_n\in \mathbb{R}$, $\sigma_n\to 0$ and $E_n\in \mathcal{E}$ be a sequence such that
\[
c\le \underset{u\in E_n}{\max} J(u)<c+\sigma_n.
\]
Then there exists a sequence $v_n\in X$ such that for some $C'>0$,
$$ (i)\;  c\le J(v_n) <c+\sigma_n; \;\; (ii)\;  \|\nabla_{X}J(v_n)\|< C' \sqrt{\sigma_n}; \;\; (iii) \; \text{dist}(v_n, E_n)< C'\sqrt{\sigma_n}.$$
\end{proposition}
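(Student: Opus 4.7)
My plan is to prove this by the standard contradiction--plus--deformation argument underlying all quantitative min--max theorems of Ghoussoub type. Suppose for contradiction that the conclusion fails; then for every constant $C'>0$ there is (passing to a subsequence) an index $n$ such that no $v$ simultaneously satisfies (i)--(iii). Negating the conjunction, this means: every $v\in X$ with $c\le J(v)<c+\sigma_n$ and $\mathrm{dist}(v,E_n)<C'\sqrt{\sigma_n}$ satisfies $\|\nabla_X J(v)\|\ge C'\sqrt{\sigma_n}$. The goal is to exploit this uniform gradient lower bound to deform $E_n$ into a new admissible set $\tilde E_n\in\mathcal{E}$ with $\max_{\tilde E_n}J<c$, contradicting the definition of $c$.

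The next step is to build the deformation while respecting the constraint that it must fix $K$. Since $\max_K J<c$ and $J$ is continuous, there are open neighborhoods $\mathcal{N}_1\subset\subset\mathcal{N}_2$ of $K$ on which $J<c$. On the open set
\[
U_n=\bigl\{u\in X:\mathrm{dist}(u,E_n)<C'\sqrt{\sigma_n},\ c-\sigma_n<J(u)<c+\sigma_n\bigr\}\setminus\overline{\mathcal{N}_1},
\]
the contradiction hypothesis provides $\|\nabla_X J(u)\|\ge C'\sqrt{\sigma_n}$. Using the standard construction on $C^1$ Hilbert manifolds, I produce a locally Lipschitz pseudo-gradient vector field $V_n$ on $U_n$ with $\|V_n(u)\|\le 2\|\nabla_X J(u)\|^{-1}$ and $\langle \nabla_X J(u),V_n(u)\rangle\ge 1$, so that the flow $\eta_n$ satisfies $\tfrac{d}{dt}J(\eta_n(t,u))\le -1$. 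Multiplying $V_n$ by a Lipschitz cut-off equal to $1$ on $X\setminus\mathcal{N}_2$ and $0$ on $\mathcal{N}_1$, I obtain a globally defined, complete vector field whose flow is the identity on $K$.

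Running this flow for time $T_n=2\sigma_n$ achieves the contradiction. Along trajectories starting in $E_n\setminus\mathcal{N}_1$ that remain in the strip $\{J\ge c\}$, the value of $J$ drops by at least $T_n=2\sigma_n$, pushing them strictly below $c$; trajectories that enter $\mathcal{N}_1$ are already at $J<c$, and those starting in $\mathcal{N}_1\cap E_n$ are fixed and still have $J<c$. The displacement is bounded by $\int_0^{T_n}\|V_n(\eta_n(s,u))\|\,ds\le 2T_n/(C'\sqrt{\sigma_n})=4\sqrt{\sigma_n}/C'$, which stays inside the $C'\sqrt{\sigma_n}$-neighborhood of $E_n$ as soon as $C'\ge 2$. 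Hence $\tilde E_n:=\eta_n(T_n,E_n)\in\mathcal{E}$ by the invariance hypothesis on $\mathcal{E}$, and $\max_{\tilde E_n}J<c$, the desired contradiction.

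The main obstacle is the simultaneous bookkeeping of a single constant $C'$ controlling (i)--(iii): the displacement bound $4\sqrt{\sigma_n}/C'$ must be absorbed into $C'\sqrt{\sigma_n}$, the energy drop $2\sigma_n$ must overcome the slack $\sigma_n$ in $\max_{E_n}J<c+\sigma_n$, and the gradient lower bound $C'\sqrt{\sigma_n}$ is what makes the pseudo-gradient flow efficient. Choosing, for instance, $C'=4$ suffices, independently of $n$. A secondary technical point is that $X$ is only a Hilbert manifold and not a linear space, but pseudo-gradient vector fields and their global flows exist in this category (see \cite{Ghoussoub1993}), so the argument transports verbatim from the linear setting. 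The crucial use of the hypothesis that $\mathcal{E}$ is invariant under deformations fixing $K$ is exactly in the last step, where $\tilde E_n\in\mathcal{E}$ is asserted.
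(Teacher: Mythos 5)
The paper offers no proof of Proposition \ref{22072013}: it is quoted verbatim as a special case of \cite[Theorem 4.5]{Ghoussoub1993} (see also \cite[Lemma 3.1]{BMRV2021}), so there is no in-paper argument to compare against. What you supply is the standard contradiction-plus-quantitative-deformation proof of such min--max principles, and its architecture is correct: negate the conclusion for a fixed candidate constant, build a truncated normalized pseudo-gradient field vanishing on a neighborhood of $K$ where $J<c$, run the flow for time $2\sigma_n$, and balance the energy drop $2\sigma_n$ against the slack $\sigma_n$ while the displacement $4\sqrt{\sigma_n}/C'$ stays inside the radius $C'\sqrt{\sigma_n}$; the invariance of $\mathcal{E}$ under deformations fixing $K$ is invoked exactly where it must be, and $C'=4$ does close the bookkeeping.

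Two points need tightening. First, the negated conclusion gives $\|\nabla_X J\|\ge C'\sqrt{\sigma_n}$ only on the one-sided set $\{v:\ c\le J(v)<c+\sigma_n,\ \mathrm{dist}(v,E_n)<C'\sqrt{\sigma_n}\}$, not on your two-sided strip $U_n$, which also contains points with $c-\sigma_n<J<c$ about which the hypothesis says nothing; as written, the pseudo-gradient field need not exist there. This is repairable: since $\|\nabla_X J\|$ is continuous, the set where the lower bound holds lies inside the open set $\{\|\nabla_X J\|>\tfrac12 C'\sqrt{\sigma_n}\}$, on which the field is defined; cut off with a locally Lipschitz function supported there, losing only a factor $2$ in the displacement estimate (so $C'=4$ still works), and use that $J$ is non-increasing along the flow so that trajectories which cross below level $c$ never return --- hence the one-sided bound suffices to conclude $\max_{\widetilde E_n}J<c$. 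Second, global-in-time existence of the flow for the bounded locally Lipschitz field requires completeness of the Hilbert manifold, which is implicit in your ``complete vector field'' claim and should be stated (it is part of the hypotheses in \cite{Ghoussoub1993}). Neither issue changes the strategy; both are standard repairs.
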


\section{Mountain pass structure}\label{2306251339}
In this section, we are devoted to construct the mountain pass geometry of $J_{\mu, V}$ on $S_a$. Let $u_h$ be given in \eqref{2301111617}.
\begin{lemma}\label{2207232312}
Assume that $N\ge 3$, $p\in (2, 2^*)$, and $(V_1)$ holds. Then for any $u\in S_a$, $\underset{h\to 0^+}{\lim}J_{\mu, V}(u_h)=0$ and  $\underset{h\to \infty}{\lim}J_{\mu, V}(u_h)=-\infty$.
\end{lemma}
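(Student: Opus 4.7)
The plan is to read off both limits directly from the second expression for $\Psi_u(h)$ in \eqref{22072120}, namely
\[
\Psi_u(h)=\frac{h^{2}}{2}\|\nabla u\|_2^2+\frac12\int_{\mathbb{R}^N}V(h^{-1}x)u^2\,\ud x-\frac{\mu h^{p\gamma_p}}{p}\|u\|_p^p-\frac{h^{2^*}}{2^{*}}\|u\|_{2^*}^{2^*}.
\]
Three of the four terms are monomials in $h$ with explicit exponents, so they are easy to handle; the only real work is controlling the potential term, which will be done via the dominated convergence theorem. Observe first that by $(V_1)$, $V$ is continuous on $\mathbb{R}^N$ and converges to $V_\infty=0$ at infinity, hence $V$ is bounded on $\mathbb{R}^N$; moreover $u^2\in L^1(\mathbb{R}^N)$ since $u\in S_a$. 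Thus $|V(h^{-1}x)|u^2(x)\le \|V\|_\infty u^2(x)$ provides an $h$-independent integrable majorant.

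For the limit as $h\to 0^+$: the kinetic term $\frac{h^2}{2}\|\nabla u\|_2^2$, and both nonlinear terms (which carry positive powers $h^{p\gamma_p}$ and $h^{2^*}$), tend to zero. For the potential term, for every fixed $x\neq 0$ we have $|h^{-1}x|\to\infty$, so $V(h^{-1}x)\to V_\infty=0$ pointwise; the dominated convergence theorem then gives $\int V(h^{-1}x)u^2\,\ud x\to 0$. Combining, $\Psi_u(h)\to 0$.

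For the limit as $h\to\infty$: since $p\in(2,2^*)$, we have $p\gamma_p<2^*$ and obviously $2<2^*$, so the negative term $-\frac{h^{2^*}}{2^*}\|u\|_{2^*}^{2^*}$ dominates the two positive polynomial terms as $h\to\infty$. For the potential term, for each fixed $x$, $h^{-1}x\to 0$, so by continuity $V(h^{-1}x)\to V(0)$, and dominated convergence yields $\int V(h^{-1}x)u^2\,\ud x\to V(0)a^2$, a finite constant. Adding everything, $\Psi_u(h)\to -\infty$. The sole nontrivial step is the application of DCT for the potential term, and the boundedness of $V$ guaranteed by $(V_1)$ makes that immediate, so no genuine obstacle arises.
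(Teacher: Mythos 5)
Your proof is correct, but it treats the potential term by a different mechanism than the paper. The paper never invokes pointwise boundedness of $V$ or dominated convergence: for $h\to\infty$ it simply drops the potential term using $V\le V_\infty=0$ (contained in $(V_1)$) to obtain the upper bound $J_{\mu,V}(u_h)\le \frac{h^2}{2}\|\nabla u\|_2^2-\frac{\mu h^{p\gamma_p}}{p}\|u\|_p^p-\frac{h^{2^*}}{2^*}\|u\|_{2^*}^{2^*}\to-\infty$; and for $h\to 0^+$ it uses the quadratic-form inequality \eqref{2301262110} to bound the potential contribution by $\frac{\sigma_1}{2}h^2\|\nabla u\|_2^2$, so that $|J_{\mu,V}(u_h)|$ is controlled by positive powers of $h$ alone. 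Your route instead derives boundedness of $V$ from continuity together with the limit at infinity and applies dominated convergence twice; this is valid (continuity is available since $V\in C^1(\mathbb{R}^N)$ is a standing hypothesis, even though the lemma itself only lists $(V_1)$), and it even identifies the actual limit $V(0)a^2$ of the potential term as $h\to\infty$, which the paper does not need. What the paper's argument buys is, first, independence from any pointwise bound on $V$ --- only the integral inequality and the sign condition in $(V_1)$ are used --- and, second, a quantitative $O(h^2)$ control near $h=0$ of exactly the kind reused in Lemma \ref{2207232313}. Two cosmetic points on your side: the term $-\frac{\mu h^{p\gamma_p}}{p}\|u\|_p^p$ is negative, not positive (harmless, since it only helps push the functional to $-\infty$), and one should record that $\|u\|_{2^*}>0$ because $u\in S_a$ with $a>0$, so the leading $-h^{2^*}$ term is genuinely present.
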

\begin{proof}
Let $u\in S_a$. By direct computation, we have
\[
\begin{aligned}
J_{\mu, V}(u_h)\le \frac{h^{2}}{2} \|\nabla u\|_2^2-\frac{\mu h^{p\gamma_p }}{p} \|u\|_p^p-\frac{h^{2^*}}{2^*}\|u\|_{2^*}^{2^*}.\\
\end{aligned}
\]
For $p\in (2, 2^*)$, we deduce that $\underset{h\to \infty}{\lim}J_{\mu, V}(u_h)=-\infty$.
On the other hand, by $(V_1)$,
\[
\begin{aligned}
\left| J_{\mu, V}(u_h)\right|\le \left(\frac12+\sigma_1\right) h^{2}\|\nabla u\|_2^2+\frac{\mu h^{p\gamma_p }}{p} \|u\|_p^p+\frac{h^{2^*}}{2^*}\|u\|_{2^*}^{2^*}.\\
\end{aligned}
\]
Thus $\underset{h\to 0^+}{\lim}J_{\mu, V}(u_h)=0$.
\end{proof}
Now, we consider for $k>0$,
\[
A_k=\{u\in S_a: \|\nabla u\|_2< k\}\;\;\text{and} \;\; \partial A_k=\{u\in S_a: \|\nabla u\|_2= k\}.
\]
A key observation is that under $(V_1)$, $J_{\mu, V}$ is lower bounded by a positive constant on $\partial A_k$ for $k>0$ small enough.
\begin{lemma}\label{2207232313}
Assume that $N\ge 3$, $p\in [\hat{p}, 2^*)$, and $(V_1)$ holds with
\begin{equation}\label{2301301540}
\sigma_1<1\;\; \text{if}~p\in (\hat{p}, 2^*);\;\; \text{or}\;\;\sigma_1<{1}-\frac{2\mu}{\hat{p}}(C_{N, \hat{p}})^{\hat{p}}a^{\frac4N}\;\;\, \text{if}~p=\hat{p}.\\
\end{equation}
Then there is some $k_0>0$ such that $J_{\mu, V}$ has a positive lower bound $\beta$ on $\partial A_{k_0}$.
\end{lemma}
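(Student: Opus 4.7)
The plan is to bound $J_{\mu,V}(u)$ from below on $\partial A_k$ by estimating each term and exploiting the fact that $V_\infty = 0$. Using $(V_1)$ I have the absolute bound
\[
\left|\int_{\mathbb{R}^N} V(x) u^2\,\ud x\right| \le \sigma_1 \|\nabla u\|_2^2,
\]
so that
\[
J_{\mu,V}(u) \ge \frac{1-\sigma_1}{2} \|\nabla u\|_2^2 - \frac{\mu}{p}\|u\|_p^p - \frac{1}{2^*}\|u\|_{2^*}^{2^*}.
\]
For $u \in S_a$, the Gagliardo--Nirenberg inequality \eqref{22071819} gives $\|u\|_p^p \le (C_{N,p})^p a^{p(1-\gamma_p)} \|\nabla u\|_2^{p\gamma_p}$ and the Sobolev inequality \eqref{2301112226} gives $\|u\|_{2^*}^{2^*} \le \mathcal{S}^{-2^*/2}\|\nabla u\|_2^{2^*}$. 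Therefore, writing $k=\|\nabla u\|_2$, I arrive at a pointwise lower bound of the form
\[
J_{\mu,V}(u) \ge \frac{1-\sigma_1}{2}\, k^2 - \frac{\mu (C_{N,p})^p}{p} a^{p(1-\gamma_p)}\, k^{p\gamma_p} - \frac{1}{2^*\,\mathcal{S}^{2^*/2}}\, k^{2^*}.
\]

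Now I split according to $p$. If $p \in (\hat{p}, 2^*)$, then $p\gamma_p > 2$ and $2^* > 2$, and since $\sigma_1 < 1$ the leading quadratic term with positive coefficient $\tfrac{1-\sigma_1}{2}$ dominates as $k\to 0^+$. So I can pick $k_0 > 0$ small enough that the right-hand side is at least, say, $\tfrac{1-\sigma_1}{4}\, k_0^2 =: \beta > 0$ on $\partial A_{k_0}$. If instead $p = \hat{p}$, then $p\gamma_p = 2$, so the $k^2$ coefficient becomes
\[
\frac{1-\sigma_1}{2} - \frac{\mu (C_{N,\hat{p}})^{\hat{p}}}{\hat{p}} a^{4/N},
\]
which is \emph{strictly positive} exactly under the assumption $\sigma_1 < 1 - \frac{2\mu}{\hat{p}}(C_{N,\hat{p}})^{\hat{p}} a^{4/N}$ of \eqref{2301301540}. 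The only remaining term is the critical one $-\tfrac{1}{2^* \mathcal{S}^{2^*/2}} k^{2^*}$ with $2^* > 2$, which is again negligible for $k$ small, so the same argument produces $k_0$ and $\beta$.

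The main (and essentially only) technical point is to separate the two regimes of $p$ and to track carefully how the Gagliardo--Nirenberg constant enters at $p=\hat{p}$, which is precisely what forces the quantitative form of \eqref{2301301540}. Given that, the proof is a short scalar inequality argument: pick $k_0$ small, set $\beta$ to be any positive value below the minimum of the chosen polynomial on $\{k_0\}$, and conclude $\inf_{\partial A_{k_0}} J_{\mu,V} \ge \beta > 0$.
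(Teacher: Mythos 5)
Your argument is correct and is essentially the paper's own proof: the same estimate via $(V_1)$, Gagliardo--Nirenberg and Sobolev yields $J_{\mu,V}(u)\ge \frac{1-\sigma_1}{2}k^2-\frac{\mu(C_{N,p})^p}{p}a^{p(1-\gamma_p)}k^{p\gamma_p}-\frac{1}{2^*}\mathcal{S}^{-2^*/2}k^{2^*}$, after which one takes $k_0$ small. You merely make explicit the case split $p=\hat p$ versus $p>\hat p$ (where $p\gamma_p=2$ forces the quantitative bound in \eqref{2301301540}), which the paper leaves implicit.
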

\begin{proof}
By Gagliardo-Nirenberg inequality and Sobolev inequality,
\[
\begin{aligned}
J_{\mu, V}(u)&\ge \frac12 \|\nabla u\|_2^2-\frac12 \sigma_1\|\nabla u\|_{2}^2-\frac{\mu}{p} \|u\|_p^p-\frac{1}{2^*}\|u\|_{2^*}^{2^*}\\
&\ge \frac{1-\sigma_1}{2}\|\nabla u\|_2^2-\frac{\mu (C_{N,p})^p}{p}a^{{p(1-\gamma_p)}}\|\nabla u\|_2^{p\gamma_p}-\frac{1}{2^*}\mathcal{S}^{-\frac{2^*}{2}}\|\nabla u\|_2^{2^*}.
\end{aligned}
\]
Then there exist $k_0>0$ and  $\beta>0$ such that $J_{\mu, V}(u)>\beta$ for $u\in \partial A_{k_0}$.
\end{proof}
Note that by Lemmas \ref{2207232312} and \ref{2207232313}, the condition \eqref{2301301540} can preserve the mountain pass geometry of $J_{\mu, V}$. Moreover, \eqref{2301301540} implies \eqref{2301291935}, which yields that $m_a$ is well defined. Let $v^a$ be the solution of Theorem \ref{2207231547}. That is,
\begin{equation}\label{2303272013}
J_{\mu}(v^a)=m_a.
\end{equation}
According to Lemma \ref{2207232312}, we have that $\underset{h\to 0^+}{\lim}J_{\mu, V}(v^a_h)=0$ and  $\underset{h\to \infty}{\lim}J_{\mu, V}(v^a_h)=-\infty$. Hence there exist $h_1>h_0>0$ such that $e_0:=v^a_{h_0}\in A_{k_0}$, $e_1:=v^a_{h_1}\in S_a\backslash A_{k_0}$,
$J_{\mu, V}(e_0)<\beta$ and $J_{\mu, V}(e_1)<0$, where $\beta$ is given in Lemma \ref{2207232313}. Thus, the mountain pass paths are given by
\[
\Gamma:=\{\xi\in C([0,1], S_a): \xi(0)=e_0,\, \xi(1)=e_1\},
\]
and the mountain pass level is
\[
m_{V,\, a}:=\underset{\xi\in\Gamma}{\inf} \underset{t\in [0,1] }{\max} {J_{\mu, V}(\xi(t))}.
\]
Obviously,
\begin{equation}\label{2209120004}
m_{V,\, a}\ge\beta>0.
\end{equation}
Moreover, $m_{V,\, a}$ has an upper bound $m_a$ given in \eqref{22071824}.
\begin{lemma}\label{2207091943}
Assume that $N\ge 3$, $p\in [\hat{p}, 2^*)$, $(V_1)$ holds with \eqref{2301301540} and $V\not\equiv 0$. Then $m_a>m_{V,\, a}$.
\end{lemma}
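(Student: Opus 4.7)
The plan is to produce an explicit competitor in $\Gamma$ whose maximum value of $J_{\mu, V}$ strictly undercuts $m_a$. Given the construction of $e_0 = v^a_{h_0}$ and $e_1 = v^a_{h_1}$ as points on the fiber of $v^a$, the natural candidate is the affine fiber path
\[
\xi(t) := v^a_{h(t)}, \qquad h(t) := (1-t) h_0 + t h_1, \qquad t \in [0, 1],
\]
which manifestly lies in $\Gamma$.

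The heart of the argument is the pointwise identity
\[
J_{\mu, V}(v^a_h) = J_{\mu}(v^a_h) + \tfrac{1}{2} \int_{\mathbb{R}^N} V(x) (v^a_h)^2 \, \ud x, \qquad h > 0.
\]
Since we are normalizing $V_\infty = 0$, assumption $(V_1)$ forces $V \le 0$ pointwise, and the hypothesis $V \not\equiv 0$ then upgrades this to $V < 0$ on a set of positive measure. Because $v^a$ is a \emph{positive} solution of the autonomous problem (Theorem \ref{2207231547}), the strong maximum principle gives $v^a > 0$ everywhere, and consequently $(v^a_h)^2 > 0$ a.e. for every $h > 0$. The integral term is therefore strictly negative, yielding the strict pointwise bound
\[
J_{\mu, V}(v^a_h) < J_{\mu}(v^a_h) \qquad \text{for every } h > 0.
\]

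To close, I would use compactness of $[h_0, h_1]$ together with continuity of $h \mapsto J_{\mu, V}(v^a_h)$ to select $h^\star \in [h_0, h_1]$ realizing $\max_{t \in [0, 1]} J_{\mu, V}(\xi(t))$. Since $v^a \in \mathcal{P}_\mu$, the fiber map $h \mapsto J_{\mu}(v^a_h)$ attains its unique maximum at $h = 1$ with value $J_{\mu}(v^a) = m_a$, so by \eqref{2302152221} one has $\max_{h > 0} J_{\mu}(v^a_h) = m_a$. Chaining the inequalities gives
\[
m_{V, a} \le \max_{t \in [0, 1]} J_{\mu, V}(\xi(t)) = J_{\mu, V}(v^a_{h^\star}) < J_{\mu}(v^a_{h^\star}) \le m_a,
\]
as desired. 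No serious obstacle appears in this scheme: the only point requiring any justification is the strict positivity of $v^a$ (to turn $V \le 0$ with $V \not\equiv 0$ into a strictly negative integral rather than a merely non-positive one), and this is a standard consequence of the strong maximum principle applied to the equation satisfied by $v^a$.
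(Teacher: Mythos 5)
Your proof is correct and follows essentially the same route as the paper: both arguments run along the fiber $h\mapsto v^a_h$ of the positive ground state $v^a$, use the strict pointwise inequality $J_{\mu,V}(v^a_h)<J_{\mu}(v^a_h)$ coming from $V\le V_\infty=0$, $V\not\equiv 0$ and $v^a>0$, and then compare the resulting maximum with $m_a=\max_{h>0}J_{\mu}(v^a_h)$ via \eqref{2302152221}. The only cosmetic difference is that you take the maximum over the compact segment $[h_0,h_1]$ while the paper maximizes over all of $\mathbb{R}^+$; both yield the same chain of inequalities.
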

\begin{proof}
Let $v^a$ be given in \eqref{2303272013}. By Theorem \ref{2207231547}, $v^a$ is positive. By Lemma \ref{2207232312}, there exists some $\widetilde{h}$ such that $\underset{h\in \mathbb{R}^+}{\max} J_{\mu, V}(v^a_h)= J_{\mu, V}( v^a_{\widetilde{h}})$. Therefore, using \eqref{2302152221}, we have
\[
\begin{aligned}
m_{V,\, a} &\le \underset{h\in \mathbb{R}^+}{\max} J_{\mu, V}(v^a_h)= J_{\mu, V}(v^a_{\widetilde{h}})< J_{\mu}( v^a_{\widetilde{h}}) \le \underset{h\in \mathbb{R}^+}{\max} J_{\mu}(v^a_{h})=m_a.
\end{aligned}
\]
So we are done.
\end{proof}
In order to derive a bounded Palais-Smale sequence of $J_{\mu, V}$ at $m_{V,\, a}$, we follow the idea in \cite{Jeanjean97} (see also \cite{BMRV2021,MRV2022}) to construct an augmented functional
\begin{equation}\label{2307301040}
\widetilde{J}_{\mu, V}(u, h):=J_{\mu, V}(u_h),\;\; (u, h)\in H^1(\mathbb{R}^N)\times \mathbb{R}^+
\end{equation}
where $u_h$ is defined in \eqref{2301111617}. Actually $\widetilde{J}_{\mu, V}$ also possesses the mountain pass structure on $S_a\times \mathbb{R}^+$. The new mountain pass paths on $S_a\times \mathbb{R}^+$ is given by
\[
\widetilde{\Gamma}:=\{\widetilde{\xi}\in C([0,1], S_a\times \mathbb{R}^+): \widetilde{\xi}(0)=(e_0, 1),\, \widetilde{\xi}(1)=(e_1,1)\}.
\]
Any path $\widetilde{\xi}\in\widetilde{\Gamma}$ can be represented by
\[
\widetilde{\xi}(t)=(\xi(t), s(t))\;\; \text{with}~\xi\in \Gamma,~s\in C([0,1], \mathbb{R}^+)~\text{and}~s(0)=s(1)=1,
\]
which will intersect the set
\[
\widetilde{\partial A_{k_0}}:=\{(u, h)\in S_a\times \mathbb{R}^+: u_h\in \partial A_{k_0}\}.
\]
Moreover, $\widetilde{J}_{\mu, V}(u)\ge \beta$ for any $u\in \widetilde{\partial A_{k_0}}$. The new mountain pass level is
\[
\widetilde{m}_{V,\, a} :=\underset{\widetilde{\xi}\in\widetilde{\Gamma}}{\inf} \underset{t\in [0,1] }{\max} {\widetilde{J}_{\mu, V}(\widetilde{\xi}(t))}.
\]
\begin{lemma}\label{2302152315}
Assume that $N\ge 3$, $p\in [\hat{p}, 2^*)$ and $(V_1)$ holds with \eqref{2301301540}. Then $\widetilde{m}_{V,\, a}={m}_{V,\, a}$.
\end{lemma}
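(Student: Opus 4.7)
The plan is to prove the equality by establishing the two inequalities $\widetilde{m}_{V,\, a}\le {m}_{V,\, a}$ and $\widetilde{m}_{V,\, a}\ge {m}_{V,\, a}$ via a natural correspondence between paths in $\Gamma$ and paths in $\widetilde{\Gamma}$. The key identity we exploit throughout is $\widetilde{J}_{\mu, V}(u,h) = J_{\mu, V}(u_h)$, together with the obvious fact that $u_1 = u$.

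For the inequality $\widetilde{m}_{V,\, a}\le {m}_{V,\, a}$, I would take any $\xi\in \Gamma$ and lift it to $\widetilde{\xi}(t) := (\xi(t), 1)$. Since $\xi(0)=e_0$ and $\xi(1)=e_1$, we get $\widetilde{\xi}(0) = (e_0, 1)$ and $\widetilde{\xi}(1) = (e_1, 1)$, so $\widetilde{\xi}\in \widetilde{\Gamma}$. By construction $\widetilde{J}_{\mu, V}(\widetilde{\xi}(t)) = J_{\mu, V}(\xi(t)_1) = J_{\mu, V}(\xi(t))$, hence $\max_{t} \widetilde{J}_{\mu, V}(\widetilde{\xi}(t)) = \max_{t} J_{\mu, V}(\xi(t))$. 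Taking the infimum over $\xi\in \Gamma$ yields the desired inequality.

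For the reverse inequality $\widetilde{m}_{V,\, a}\ge {m}_{V,\, a}$, I would take any $\widetilde{\xi}\in \widetilde{\Gamma}$, write $\widetilde{\xi}(t)=(\xi(t), s(t))$ with $\xi\in C([0,1], S_a)$, $s\in C([0,1], \mathbb{R}^+)$, $s(0)=s(1)=1$, and project it back to $S_a$ by defining
\[
\eta(t) := \xi(t)_{s(t)},\qquad t\in [0,1].
\]
Since the scaling \eqref{2301111617} preserves the $L^2$-norm, $\eta(t)\in S_a$. The endpoints match: $\eta(0)=\xi(0)_1=e_0$ and $\eta(1)=\xi(1)_1=e_1$. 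Moreover, $\eta\in C([0,1], H^1(\mathbb{R}^N))$ because the joint map $(u, h)\mapsto u_h$ is continuous from $H^1(\mathbb{R}^N)\times \mathbb{R}^+$ into $H^1(\mathbb{R}^N)$ (this follows by splitting $\|(u_n)_{h_n} - u_h\|_{H^1} \le \|(u_n)_{h_n} - u_{h_n}\|_{H^1} + \|u_{h_n} - u_h\|_{H^1}$, using that $\|(u_n)_{h_n} - u_{h_n}\|_2 = \|u_n - u\|_2$, $\|\nabla((u_n)_{h_n} - u_{h_n})\|_2 = h_n \|\nabla(u_n-u)\|_2$, and the standard continuity of dilation on $H^1$). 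Hence $\eta\in \Gamma$, and by the defining identity
\[
J_{\mu, V}(\eta(t)) = J_{\mu, V}(\xi(t)_{s(t)}) = \widetilde{J}_{\mu, V}(\xi(t), s(t)) = \widetilde{J}_{\mu, V}(\widetilde{\xi}(t)),
\]
so $\max_{t} J_{\mu, V}(\eta(t)) = \max_{t} \widetilde{J}_{\mu, V}(\widetilde{\xi}(t))$. Taking the infimum over $\widetilde{\xi}\in \widetilde{\Gamma}$ gives ${m}_{V,\, a}\le \widetilde{m}_{V,\, a}$.

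The only nontrivial point is the continuity of $(u,h)\mapsto u_h$ in $H^1(\mathbb{R}^N)$, which is routine but should be mentioned explicitly; everything else is a direct manipulation of the definitions, so no substantial obstacle is expected.
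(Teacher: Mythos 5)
Your proof is correct and follows essentially the same route as the paper: lift paths via $\xi\mapsto(\xi,1)$ for one inequality and project via $\widetilde{\xi}=(\xi,s)\mapsto \xi_s$ for the other. The only difference is that you explicitly justify the continuity of $(u,h)\mapsto u_h$ in $H^1(\mathbb{R}^N)$, a point the paper leaves implicit; this is a welcome addition but not a departure in method.
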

\begin{proof}
Clearly, $\widetilde{m}_{V,\, a}$ and ${m}_{V,\, a}$ are well defined. For any $\xi\in\Gamma$, we have $(\xi, 1)\in\widetilde{\Gamma}$. Thus
\[
\widetilde{m}_{V,\, a} \le \underset{\xi\in\Gamma}{\inf} \underset{t\in [0,1] }{\max} {\widetilde{J}_{\mu, V}(\xi(t),1)}=\underset{\xi\in\Gamma}{\inf} \underset{t\in [0,1] }{\max} {J_{\mu, V}(\xi(t))}={m}_{V,\, a} .
\]
On the other hand, for any $\widetilde{\xi}\in\widetilde{\Gamma}$, it can be represented by $\widetilde{\xi}=(\xi,s)$ with $\xi\in \Gamma,~s\in C([0,1], \mathbb{R}^+)~\text{and}~s(0)=s(1)=1$. One can get $\xi_s\in \Gamma$ where $\xi_s$ is given by \eqref{2301111617}. Hence
\[
{m}_{V,\, a} \le \underset{t\in[0,1]}{\max}J_{\mu, V}(\xi_s(t))=\underset{t\in[0,1]}{\max}\widetilde{J}_{\mu, V}(\xi(t),s(t)).
\]
Since $\widetilde{\xi}$ is arbitrary, we conclude ${m}_{V,\, a} \le \widetilde{m}_{V,\, a} $. So $\widetilde{m}_{V,\, a}={m}_{V,\, a}$.
\end{proof}

\section{Palais-Smale sequence}\label{2306251338}
In this section, we are dedicated to find a bounded Palais-Smale sequence $\{u_n\}$ approaching the Pohozaev manifold, that is, $P_\mu(u_n)=o_n(1)$ or $\widetilde{P}_\mu(u_n)=o_n(1)$. Recall that a Palais-Smale sequence $\{u_n\}$ of $J_{\mu,\, V}|_{S_a}$ at $c$, means that $u_n\in S_a$,
\[
J_{\mu,\, V}(u_n)=c+o_n(1),
\]
and
\[
\|J'_{\mu,\, V}(u_n)\|_{T^*_{u_n}S_a}=o_n(1),
\]
where
\[
T_{u_n}S_a=\left\{v\in H^1(\mathbb{R}^N): \int_{\mathbb{R}^N}u_n v \ud x=0\right\}
\]
is a subspace of $H^1(\mathbb{R}^N)$, and $T^{*}_{u_n}S_a$ is the dual space of $T_{u_n}S_a$. By means of Proposition \ref{22072013}, we will obtain the following Lemma.
\begin{lemma}\label{22072020}
Assume that $N\ge 3$, $p\in [\hat{p}, 2^*)$. We have the following assertions:
\begin{itemize}
\item[$(i)$]  If $(V_1)$, $(V_3)$ hold with \eqref{2301301540}, then there exists a Palais-Smale sequence $\{u_n\}$ of $J_{\mu, V}|_{S_a}$ at the mountain pass level ${m}_{V,\, a}$, which satisfies
\begin{equation}\label{22072019}
\begin{aligned}
&P_{\mu, V}(u_n)\to 0\;\;\text{and}\;\;\|u_n^{-}\|_2\to 0\;\; \text{as}~ n\to \infty.
\end{aligned}
\end{equation}
\item[$(ii)$] If $V\in C^1(\mathbb{R}^N)$ satisfies $(V_1)$, $(V_2)$ with \eqref{2301301540}, then there exists a Palais-Smale sequence $\{u_n\}$ of $J_{\mu, V}|_{S_a}$ at the mountain pass level ${m}_{V,\, a}$, which satisfies
\begin{equation}\label{2209112237}
\begin{aligned}
\widetilde{P}_{\mu, V}(u_n)\to 0\;\;\text{and}\;\;\|u_n^{-}\|_2\to 0\;\; \text{as} ~ n\to \infty.
\end{aligned}
\end{equation}
\end{itemize}
\end{lemma}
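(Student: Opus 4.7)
The plan is to apply Proposition~\ref{22072013} to the augmented functional $\widetilde{J}_{\mu, V}$ of \eqref{2307301040} on the product manifold $X = S_a \times \mathbb{R}^+$, taking $K = \{(e_0, 1), (e_1, 1)\}$, $\mathcal{E} = \widetilde{\Gamma}$, and $c = \widetilde{m}_{V, a} = m_{V, a}$ by Lemma~\ref{2302152315}. The required strict inequality $\max_{K} \widetilde{J}_{\mu, V} < c$ follows from $J_{\mu, V}(e_0) < \beta \le m_{V, a}$ and $J_{\mu, V}(e_1) < 0 < m_{V, a}$, while $\widetilde{\Gamma}$ is invariant under all deformations that fix $K$.

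To set up the conclusion $\|u_n^-\|_2 \to 0$, I would first pick a minimizing sequence $\eta_n \in \Gamma$ with $\max_t J_{\mu, V}(\eta_n(t)) < m_{V, a} + \frac{1}{n}$ and replace it by $\xi_n(t) := |\eta_n(t)|$. This still lies in $\Gamma$ because the endpoints $e_0, e_1$ are strictly positive dilations of $v^a > 0$, and it is still minimizing since $J_{\mu, V}(|u|) \le J_{\mu, V}(u)$ via $|\nabla |u|| \le |\nabla u|$ a.e. Lifting to $\widetilde{\Gamma}$ by $E_n := \{(\xi_n(t), 1) : t \in [0, 1]\}$, Proposition~\ref{22072013} produces $(v_n, s_n) \in S_a \times \mathbb{R}^+$ with $\widetilde{J}_{\mu, V}(v_n, s_n) \to m_{V, a}$, $\|\nabla_X \widetilde{J}_{\mu, V}(v_n, s_n)\| \to 0$, and $\text{dist}((v_n, s_n), E_n) \to 0$. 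In particular $s_n \to 1$ and $\|v_n - \xi_n(t_n)\|_{H^1} \to 0$ for some $t_n \in [0, 1]$; since $\xi_n(t_n) \ge 0$ one has $v_n^- \le |v_n - \xi_n(t_n)|$, so $\|v_n^-\|_2 \to 0$. Setting $u_n := (v_n)_{s_n}$, the scaling \eqref{2301111617} preserves the $L^2$-norm and the sign of the negative part, so $u_n \in S_a$, $\|u_n^-\|_2 = \|v_n^-\|_2 \to 0$, and $J_{\mu, V}(u_n) = \widetilde{J}_{\mu, V}(v_n, s_n) \to m_{V, a}$.

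It remains to convert the smallness of $\nabla_X \widetilde{J}_{\mu, V}(v_n, s_n)$ into the Pohozaev residual and the tangential Palais--Smale condition for $J_{\mu, V}$ at $u_n$. Using $(v_h)_t = v_{ht}$, one checks $\Psi_{u_n}'(1) = s_n \Psi_{v_n}'(s_n) = s_n\,\partial_h \widetilde{J}_{\mu, V}(v_n, s_n)$; combined with the discussion around \eqref{2302071001}, the vanishing of $\partial_h \widetilde{J}_{\mu, V}$ together with $s_n \to 1$ yields $P_{\mu, V}(u_n) \to 0$ under $(V_1), (V_3)$ via Lemma~\ref{2301272023}, and $\widetilde{P}_{\mu, V}(u_n) \to 0$ under $V \in C^1$, $(V_1), (V_2)$ (noting that $(V_2)$ supplies the integrability bounds \eqref{2301262110} and \eqref{2301271518}). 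For the tangential direction, the bijection $\phi \leftrightarrow \psi$ given by $\psi(x) := s_n^{-N/2}\phi(x/s_n)$ maps $T_{u_n} S_a$ onto $T_{v_n} S_a$ (since $\int v_n \psi\,\ud x = \int u_n \phi\,\ud x$) with $H^1$-norms equivalent up to factors of $s_n^{\pm 1}$, and $J_{\mu, V}'(u_n)[\phi] = \partial_u \widetilde{J}_{\mu, V}(v_n, s_n)[\psi]$; because $s_n$ is bounded away from $0$ and $\infty$, this translates (ii) of Proposition~\ref{22072013} into $\|J'_{\mu, V}(u_n)\|_{T^*_{u_n} S_a} \to 0$. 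The main obstacle is the bookkeeping of these scalings inside the product manifold---in particular tracking the factors of $s_n$ in the dual norms---so that the Palais--Smale sequence on $X$ descends to a bona fide Palais--Smale sequence for $J_{\mu, V}|_{S_a}$ with the correct Pohozaev residual; the nonnegativity engineering of $E_n$ is secondary but indispensable for \eqref{22072019} and \eqref{2209112237}.
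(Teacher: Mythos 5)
Your proposal is correct and follows essentially the same route as the paper: replace a minimizing sequence of paths by its absolute value, apply Proposition \ref{22072013} to the augmented functional $\widetilde{J}_{\mu,V}$ on $S_a\times\mathbb{R}^+$ with $K=\{(e_0,1),(e_1,1)\}$ and $E_n=\{(\xi_n(t),1)\}$, identify the $h$-derivative with the Pohozaev residual via Lemma \ref{2301272023}, and descend to $u_n=(v_n)_{s_n}$ using $s_n\to 1$. Your extra bookkeeping of the factor $s_n$ in $\Psi_{u_n}'(1)=s_n\Psi_{v_n}'(s_n)$ and of the scaling bijection between tangent spaces is in fact slightly more careful than the paper, which simply cites Jeanjean for that step.
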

\begin{proof}
According to the definition of ${m}_{V,\, a}$ and Lemma \ref{2302152315}, there is a minimizing sequence $\{\xi_n\}\subset \Gamma$ for ${m}_{V,\, a}$, which satisfies
\[
\widetilde{m}_{V,\, a}={m}_{V,\, a}\le \underset{t\in [0,1]}{\max}J_{\mu, V}(\xi_n(t))< {m}_{V,\, a}+\frac1n=\widetilde{m}_{V,\, a}+\frac1n.
\]
Here the minimizing sequence $\{\xi_n\}$ can be replaced by $\{|\xi_n|\}$ since $J_{\mu, V}(u)\ge J_{\mu, V}(|u|)$ for all $u\in S_a$. Therefore we can assume that $\xi_n\ge 0$.
We will prove this lemma by applying Proposition \ref{22072013} to $\widetilde{J}_{\mu, V}$ with
\[
X=S_a\times \mathbb{R}^+,\;\; K=\{(e_0, 1), (e_1, 1)\},\;\; \mathcal{E}=\{{\rm Im}({\widetilde \xi}): {\widetilde \xi}\in\widetilde{\Gamma}\},\;\; E_n=\{(\xi_n(t), 1): t\in [0,1]\},
\]
and $\sigma_n=\frac1n$. It results that there exist a sequence $\{(v_n,h_n)\}\subset S_a\times \mathbb{R}^+$ and $c>0$ such that
\begin{equation}\label{2302152329}
\begin{cases}
\begin{aligned}
\widetilde{m}_{V,\, a}\le \widetilde{J}_{\mu, V}(v_n,h_n) <\widetilde{m}_{V,\, a}+\frac1n,\\
 \|\nabla_{S_a\times \mathbb{R}^+}\widetilde{J}_{\mu, V}(v_n,h_n)\|_{T^{*}_{(v_n,h_n)}(S_a\times \mathbb{R}^+)}<c\frac{1}{\sqrt{n}},\\
\underset{t\in [0,1]}{\min}\|\xi_n(t)-v_n\|_{H^1}+|h_n-1|<c\frac{1}{\sqrt{n}},\\
\end{aligned}
\end{cases}
\end{equation}
where
$$T_{(v_n,h_n)}(S_a\times \mathbb{R}^+)=\left\{(u,t):u\in H^1(\mathbb{R}^N),\, t\in \mathbb{R},\,  \int_{\mathbb{R}^N}uv_n \ud x=0\right\}$$
which is a subspace of $H^1(\mathbb{R}^N)\times \mathbb{R}$, and $T^{*}_{(v_n,h_n)}(S_a\times \mathbb{R}^+)$ is the dual space of $T_{(v_n,h_n)}(S_a\times \mathbb{R}^+)$. In virtue of Lemma \ref{2301272023}, if $(V_1)$, $(V_3)$ hold, by differentiating $\widetilde{J}_{\mu, V}$ with respect to second variable, we get from \eqref{2302152329} that
\begin{equation}\label{2307022302}
\left|\frac{\partial \widetilde{J}_{\mu, V}}{\partial h}(v_n,h_n)\right|=|{P}_{\mu, V}((v_n)_{h_n})|\to 0;
\end{equation}
if $V\in C^1(\mathbb{R}^N)$ satisfies $(V_1)$, $(V_2)$, then
\begin{equation}\label{2307022303}
\left|\frac{\partial \widetilde{J}_{\mu, V}}{\partial h}(v_n,h_n)\right|=|\widetilde{P}_{\mu, V}((v_n)_{h_n})|\to 0,
\end{equation}
as $n\to \infty$. We differentiate $\widetilde{J}_{\mu, V}$ with respect to the first variable. It follows that
\[
\|J'_{\mu,\, V}((v_n)_{h_n})\|_{T^*_{(v_n)_{h_n}}S_a}\to 0,
\]
as $n\to \infty$, see \cite{Jeanjean97}. Setting $u_n:=(v_n)_{h_n}$ and by $h_n\to 1$, we conclude from \eqref{2302152329}, \eqref{2307022302}, \eqref{2307022303} that \eqref{22072019}, \eqref{2209112237} hold, and $\{u_n\}$ is a Palais-Smale sequence of $J_{\mu, V}$ at ${m}_{V,\, a}$.
\end{proof}
Under the assumptions $(V_1)$, $(V_2)$ or $(V_3)$, we are ready to prove the existence of bounded Palais-Smale sequence.
\begin{lemma}\label{22072315}
Let $N\ge 3$, $p\in [\hat{p}, 2^*)$. Suppose that $V\in C^1(\mathbb{R}^N)$ satisfies
\begin{itemize}
\item[$(i)$] either $(V_1)$, $(V_3)$ hold with \eqref{2301301540}, \eqref{22072321132}-\eqref{22072321131};
\item[$(ii)$] or $(V_1)$, $(V_2)$ hold with \eqref{2301301540}, \eqref{22072321141}, \eqref{2207232114}.
\end{itemize}
Let $\{u_n\}$ be a Palais-Smale sequence of $J_{\mu, V}$ at ${m}_{V,\, a}$ such that $P_{\mu, V}(u_n)\to 0$ if $(i)$ holds, or $\widetilde{P}_{\mu, V}(u_n)\to 0$ if $(ii)$ holds. Then $\{u_n\}$ is bounded in $H^1(\mathbb{R}^N)$.
\end{lemma}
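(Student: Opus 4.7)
The plan is to adapt the argument from Lemmas \ref{22072415} and \ref{2207241537} (which showed $J_{\mu,V}(u)\ge 0$ for genuine solutions) to the present setting of approximate solutions. The key observation is that we now have $J_{\mu,V}(u_n)=m_{V,a}+o_n(1)$ and a Pohozaev remainder that is only $o_n(1)$, so the same algebraic manipulation will produce a bound of the form $C_1\|\nabla u_n\|_2^2 \le m_{V,a}+o_n(1)$ for a positive constant $C_1$, forcing boundedness of $\|\nabla u_n\|_2$. Since $\|u_n\|_2=a$ is prescribed, this yields boundedness in $H^1(\mathbb{R}^N)$.

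For case $(i)$, I would consider the linear combination $J_{\mu,V}(u_n) - \theta\, P_{\mu,V}(u_n)$, choosing $\theta$ to kill off a top-order term. When $p\in(\hat p,2^*)$ I take $\theta = 1/(p\gamma_p)$, which eliminates the $\|u_n\|_p^p$ term and leaves
\[
\left(\tfrac12-\tfrac1{p\gamma_p}\right)\|\nabla u_n\|_2^2 + \left(\tfrac12-\tfrac{N}{2p\gamma_p}\right)\!\!\int V u_n^2 \,\ud x - \tfrac{1}{p\gamma_p}\!\!\int V u_n \nabla u_n\cdot x\,\ud x + \left(\tfrac{1}{p\gamma_p}-\tfrac{1}{2^*}\right)\|u_n\|_{2^*}^{2^*} = m_{V,a}+o_n(1).
\]
Then $(V_1)$, $(V_3)$, Hölder, and Hardy's inequality control the two potential integrals by $(C_1(\sigma_1)+C_2(\sigma_3))\|\nabla u_n\|_2^2$ exactly as in the proof of Lemma \ref{22072415}; the hypotheses \eqref{2207232113}, \eqref{22072321131} are precisely what make the leading coefficient of $\|\nabla u_n\|_2^2$ strictly positive. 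When $p=\hat p$ we instead take $\theta=1/2^*$ to eliminate $\|u_n\|_{2^*}^{2^*}$ and exploit Gagliardo-Nirenberg to replace the resulting $\mu\|u_n\|_{\hat p}^{\hat p}$ by $\mu\gamma_{\hat p}(C_{N,\hat p})^{\hat p}a^{4/N}\|\nabla u_n\|_2^2$; conditions \eqref{22072321132}, \eqref{22072321133} ensure that the total coefficient of $\|\nabla u_n\|_2^2$ remains strictly positive.

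For case $(ii)$, the strategy is identical but uses $\widetilde P_{\mu,V}$ in place of $P_{\mu,V}$. Taking $J_{\mu,V}(u_n)-\tfrac{1}{p\gamma_p}\widetilde P_{\mu,V}(u_n)$ when $p\in(\hat p,2^*)$ gives
\[
\left(\tfrac12-\tfrac{1}{p\gamma_p}\right)\|\nabla u_n\|_2^2 + \int\!\!\left[\tfrac12 V+\tfrac{1}{p\gamma_p}W\right]u_n^2\,\ud x + \left(\tfrac{1}{p\gamma_p}-\tfrac{1}{2^*}\right)\|u_n\|_{2^*}^{2^*} = m_{V,a}+o_n(1),
\]
and hypothesis $(V_2)$ together with \eqref{2207232114} gives a strictly positive coefficient of $\|\nabla u_n\|_2^2$. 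For $p=\hat p$, one again uses $\theta=1/2^*$, absorbs $\mu\|u_n\|_{\hat p}^{\hat p}$ via Gagliardo-Nirenberg, and invokes \eqref{22072321141}.

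I do not expect a serious obstacle in any step; the bookkeeping is already carried out in Lemmas \ref{22072415} and \ref{2207241537}, and the only thing to verify is that the $o_n(1)$ remainders from $J_{\mu,V}(u_n)\to m_{V,a}$ and from $P_{\mu,V}(u_n),\widetilde P_{\mu,V}(u_n)\to 0$ can be absorbed into the inequality without compromising the strictly positive leading coefficient. Once boundedness of $\|\nabla u_n\|_2$ is established, combining with $\|u_n\|_2=a$ completes the proof.
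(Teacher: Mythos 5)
Your proposal is correct and follows essentially the same route as the paper: the paper likewise combines the energy identity $J_{\mu,V}(u_n)=m_{V,a}+o_n(1)$ with the Pohozaev remainder (effectively subtracting $\tfrac{1}{p\gamma_p}P_{\mu,V}(u_n)$, resp.\ $\tfrac{1}{p\gamma_p}\widetilde P_{\mu,V}(u_n)$, for $p>\hat p$, and eliminating $\|u_n\|_{2^*}^{2^*}$ via the coefficient $\tfrac{1}{2^*}$ together with Gagliardo--Nirenberg when $p=\hat p$), then controls the potential terms by $(V_1)$, $(V_3)$ or $(V_2)$ with the sign distinctions $N\ge 4$ versus $N=3$, exactly as in Lemmas \ref{22072415} and \ref{2207241537}. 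The smallness conditions \eqref{22072321132}--\eqref{22072321131}, \eqref{22072321141}, \eqref{2207232114} yield the strictly positive coefficient of $\|\nabla u_n\|_2^2$ in both treatments, so no gap remains.
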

\begin{proof}
In order to simplify the symbols in this proof, we write
\[
\begin{aligned}
&a_n:=\|\nabla u_n\|_2^2,\;\; b_n:=\int_{\mathbb{R}^N}V(x)u^2_n \ud x,\;\; c_n:=\int_{\mathbb{R}^N}V(x)u_n\nabla u_n\cdot x \ud x,\\
&d_n:=\|u_n\|_p^p,\;\; e_n:=\|u_n\|_{2^*}^{2^*},\;\; f_n:=\int_{\mathbb{R}^N}Wu_n^2 \ud x.
\end{aligned}
\]
{\bf Case (i)}. According to the assumptions of this lemma, we have
\begin{equation}\label{2207202352}
    a_n+b_n-\frac{2\mu}{p}d_n-\frac{2}{2^*}e_n=2{m}_{V,\, a}+o_n(1),
\end{equation}
\begin{equation}\label{22072023}
a_n+\frac{N}{2}b_n+c_n-\mu\gamma_p d_n -e_n=o_n(1).
\end{equation}\par
When $p\in (\hat{p}, 2^*)$, it follows from \eqref{22072023} that
\begin{equation}\label{2207202353}
\frac{2\mu}{p}d_n+\frac{2}{2^*}e_n\le \frac{2}{p\gamma_p}\left(\mu\gamma_p d_n+e_n\right)=\frac{2}{p\gamma_p}\left(a_n+\frac{N}{2}b_n+c_n \right)+o_n(1).
\end{equation}
Combining \eqref{2207202352} with \eqref{2207202353}, we get
\[
\begin{aligned}
a_n+b_n-\frac{2}{p\gamma_p}\left(a_n+\frac{N}{2}b_n+c_n \right)
\le a_n+b_n-\frac{2\mu}{p}d_n-\frac{2}{2^*}e_n+o_n(1)
= 2{m}_{V,\, a}+o_n(1).
\end{aligned}
\]
By the assumptions $(V_1)$ and $(V_3)$, we know that  $0\le-b_n\le \sigma_1 a_n$ and $|c_n|\le {\sigma_3} a_n$. From the above computations, it follows that if $N\ge 4$ (so $N\ge p\gamma_p$),
\[
\begin{aligned}
2{m}_{V,\, a}+o_n(1)\ge& (1-\frac{2}{p\gamma_p})a_n+(1-\frac{N}{p\gamma_p})b_n-\frac{2}{p\gamma_p}c_n
\\
\ge &(1-\frac{2}{p\gamma_p})a_n-\frac{2}{p\gamma_p} {\sigma_3} a_n\\
=&\left(1-\frac{2}{p\gamma_p}-\frac{2}{p\gamma_p} {\sigma_3}\right)a_n.
\end{aligned}
\]
Hence $\{a_n\}$ is bounded by \eqref{2207232113}. \par
%In particular,
%\[
%a_n\le \frac{2{m}_{V,\, a}}{1-\frac{2}{p\gamma_p}-\frac{2}{p\gamma_p} {\sigma_3}}+o_n(1).
%\]
If $N=3$ (hence $N< p\gamma_p$),
\[
\begin{aligned}
2{m}_{V,\, a}+o_n(1)
\ge &(1-\frac{2}{p\gamma_p})a_n-(1-\frac{3}{p\gamma_p}) \sigma_1 a_n-\frac{2}{p\gamma_p} {\sigma_3} a_n\\
=&\left(1-\frac{2}{p\gamma_p}-(1-\frac{3}{p\gamma_p}) \sigma_1-\frac{2}{p\gamma_p} {\sigma_3}\right)a_n.
\end{aligned}
\]
Hence $\{a_n\}$ is bounded by \eqref{22072321131}. \par
%In particular,
%\[
%a_n\le \frac{2{m}_{V,\, a}}{1-\frac{2}{p\gamma_p}-(1-\frac{N}{p\gamma_p}) \sigma_1-\frac{2}{p\gamma_p} {\sigma_3}}+o_n(1).
%\]
When $p=\hat{p}$, by \eqref{22072023}, we have
\begin{equation}\label{2301311129}
e_n=a_n+\frac{N}{2}b_n+c_n-\mu\gamma_{\hat{p}} d_n+o_n(1).
\end{equation}
Combining \eqref{2207202352} with \eqref{2301311129}, we get
\[
\begin{aligned}
2{m}_{V,\, a}+o_n(1)= \left(1-\frac2{2^*}\right)\left(a_n-\mu\gamma_{\hat{p}}d_n\right)-\frac2{2^*} c_n+\left(1-\frac{N}{2^*}\right)b_n+o_n(1).
\end{aligned}
\]\par
If $N\ge 4$ (we get $N\ge 2^*$), then
\begin{equation}\label{2301311130}
2{m}_{V,\, a}+o_n(1)\ge  \left(1-\frac2{2^*}\right)\left(1-\mu \gamma_{\hat{p}}C^{\hat{p}}_{N,\, {\hat{p}}}a^{\frac4N}\right)a_n-\frac2{2^*} {\sigma_3} a_n+o_n(1).
\end{equation}
Hence $\{a_n\}$ is bounded by \eqref{22072321132}.\par
% In particular,
%\[
%a_n\le \frac{2{m}_{V,\, a}}{(1-\frac2{2^*})(1-\mu \gamma_{\hat{p}}C^{\hat{p}}_{N,\, {\hat{p}}}a^{\frac4N})-\frac2{2^*}{\sigma_3}}+o_n(1).
%\]
If $N=3$ (so $N< 2^*$), we obtain from $d_n\le C^{\hat{p}}_{3,\, {\hat{p}}}a_n^{{\hat{p}}\gamma_{\hat{p}}/2}a^{\frac43}$ and $b_n\le 0$ that
\[
2{m}_{V,\, a}+o_n(1)\ge  \left(1-\frac2{6}\right)\left(1-\mu \gamma_pC^{\hat{p}}_{3,\, {\hat{p}}}a^{\frac43}\right)a_n-\frac2{6} {\sigma_3} a_n-\left(1-\frac{3}{6}\right)\sigma_1a_n+o_n(1).
\]
Hence $\{a_n\}$ is bounded by \eqref{22072321133}.
% In particular,
%\[
%a_n\le \frac{2{m}_{V,\, a}}{\frac23(1-\mu \gamma_{\hat{p}}C^{\hat{p}}_{3,\, {\hat{p}}}a^{\frac43})-\frac12\sigma_1-\frac13{\sigma_3}}+o_n(1).
%\]

\bigskip
\noindent
{\bf Case (ii)}. By a similar argument as above, \eqref{2207202352} holds and \eqref{22072023} is replaced by
\begin{equation}\label{22072300}
a_n-f_n-\mu\gamma_p d_n -e_n=o_n(1).
\end{equation}\par
When $p\in (\hat{p}, 2^*)$, it follows from \eqref{2207202352} that
\begin{equation}\label{2207230038}
\frac{2\mu}{p}d_n+\frac{2}{2^*}e_n\le \frac{2}{p\gamma_p}\left(\mu\gamma_p d_n+e_n\right)=\frac{2}{p\gamma_p}\left(a_n-f_n\right)+o_n(1).
\end{equation}
Combining \eqref{2207202352} with \eqref{2207230038}, we get
\[
\begin{aligned}
a_n+b_n-\frac{2}{p\gamma_p}\left(a_n-f_n \right)
\le &a_n+b_n-\frac{2\mu}{p}d_n-\frac{2}{2^*}e_n+o_n(1) =2{m}_{V,\, a}+o_n(1).
\end{aligned}
\]
By $(V_1)$ and $(V_2)$, we have $|b_n|\le \sigma_1 a_n$ and $|f_n|\le \sigma_2 a_n$, then
\[
\begin{aligned}
2{m}_{V,\, a}+o_n(1)\ge& \left(1-\frac{2}{p\gamma_p}\right)a_n+b_n+\frac{2}{p\gamma_p}f_n
\\
\ge &\left(1-\frac{2}{p\gamma_p}\right)a_n- 2\sigma_2 a_n\\
=&\left(1-\frac{2}{p\gamma_p}-2\sigma_2\right)a_n.
\end{aligned}
\]
Hence $\{a_n\}$ is bounded by \eqref{2207232114}. \par
%In particular,
%\[
%a_n\le \frac{2{m}_{V,\, a}}{1-\frac{2}{p\gamma_p}- \sigma_1-\frac{2}{{\hat{p}}\gamma_{\hat{p}}} \sigma_2}+o_n(1).
%\]
When $p=\hat{p}$, it follows from \eqref{22072300} that
\begin{equation}\label{2301311134}
e_n=a_n-f_n-\mu\gamma_{\hat{p}} d_n+o_n(1).
\end{equation}
Taking \eqref{2301311134} into \eqref{2207202352}, we obtain
\[
\begin{aligned}
2{m}_{V,\, a}+o_n(1)=&\left(1-\frac2{2^*}\right)\left(a_n-\mu\gamma_{\hat{p}} d_n\right)+b_n+\frac{2}{2^*}f_n +o_n(1)\\
\ge & \left(1-\frac2{2^*}\right)\left(1-\mu \gamma_{\hat{p}}C^{\hat{p}}_{N,\, {\hat{p}}}a^{\frac4N}\right)a_n-2\sigma_2a_n.
\end{aligned}
\]
Hence $\{a_n\}$ is bounded by \eqref{22072321141}.
% In particular,
%\[
%a_n\le \frac{2{m}_{V,\, a}}{(1-\frac2{2^*})(1-\mu \gamma_{\hat{p}}C^{\hat{p}}_{N,\, {\hat{p}}}a^{\frac4N})-\sigma_1-\frac{2\sigma_2}{2^*}}+o_n(1).
%\]
\end{proof}
%In the sequel, we want to investigate the sign of the limit of $\lambda_n$. From the proof of Lemma \ref{22072019}, the Lagrange multipliers satisfy
%\[
%\lambda_n a^2={\langle J'_{\mu}(u_n), u_n\rangle}=\|\nabla u_n\|_2^2+\int_{\mathbb{R}^N} V(x)u_n^2 \ud x-\mu \|u_n\|_p^p-\|u_n\|_{2^*}^{2^*}.
%\]
%Continuing the content of Lemma \ref{22072315},\sum_{1\le j\le k}
\section{Proof of main theorems}\label{2306251340}
In previous section, we have obtained a bounded Palais-Smale sequence $\{u_n\}$ at level $m_{V,\, a}$, which approach the Pohozaev manifold $\mathcal{P}_{\mu, V}$ or $\widetilde{\mathcal{P}}_{\mu, V}$. To finish our theorems, a key step is to prove that $\{u_n\}$ has a convergent subsequence in $H^1(\mathbb{R}^N)$. We can not work in the radial space $H_{rad}^1(\mathbb{R}^N)$ due to the fact that the potential function is not assumed to be radial, so we need to deal with the problem that $H^1(\mathbb{R}^N)\subset L^p(\mathbb{R}^N)$ is not compact. The Sobolev critical exponent brings another difficulties. The strategy in this section is using Lions' lemma and the estimate $m_{V,\, a}<\frac{1}{N}\mathcal{S}^{\frac{N}{2}}$ to get the following splitting result.
\begin{lemma}\label{2207110022}
Let $N\ge 3$, $p\in [\hat{p}, 2^*)$. Suppose that $V\in C^1(\mathbb{R}^N)$ satisfies
\begin{itemize}
\item[$(i)$] either $(V_1)$, $(\widetilde{V}_2)$, $(V_3)$, $(V_4)$ hold with \eqref{2301301540}, \eqref{22072321132}-\eqref{22072321131};
\item[$(ii)$] or $(V_1)$, $(V_2)$, $(V_4)$ hold with \eqref{2301301540}, \eqref{22072321141}, \eqref{2207232114}.
\end{itemize}
Let $\{u_n\}$ be a Palais-Smale sequence of $J_{\mu, V}$ at ${m}_{V,\, a}$ such that $P_{\mu, V}(u_n)\to 0$ if $(i)$ holds, or $\widetilde{P}_{\mu, V}(u_n)\to 0$ if $(ii)$ holds. In the sense of subsequence, we can assume that for some $u\in H^1(\mathbb{R}^N)$, $u_n \rightharpoonup u$ weakly in $H^1(\mathbb{R}^N)$. If this convergence is not strong, then there exist $k$ ( for some $k\in \mathbb{N}^+$) nontrivial solutions $w^1,\dots, w^k$ to \eqref{22071100} for some $\lambda<0$ and $k$ sequences $\{y_n^j\}\subset\mathbb{R}^N$ with $1\le j \le k$ such that $|y_n^j|\rightarrow \infty$, $|y_n^j-y_n^i|\to \infty$ for $i\neq j$ as $n\to \infty$, and
\begin{equation}\label{2207110010}
  u_n=u+\sum_{1\le j\le k} w^j(\cdot + y_n^j)+o_n(1) \;\; \text{in} \;\; H^1(\mathbb{R}^N).
\end{equation}
Moreover, we have
\begin{equation}\label{2207110009}
    \begin{aligned}
        \|u_n\|_2^2=\|u\|_2^2+\sum_{1\leq j\leq k} \|w^j\|_2^2,\;\;J_{\mu, V}(u_n)&=J_{\mu, V}(u)+\sum_{1\le j\le k} J_{\mu}(w^j)+o_n(1).
    \end{aligned}
\end{equation}
\end{lemma}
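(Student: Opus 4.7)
The plan is to carry out a concentration–compactness / Brezis–Lieb iteration adapted to the Sobolev critical setting, in the spirit of \cite{Soave20_JFA, BMRV2021, MRV2022}, where the strict upper bound $m_{V,a}<\tfrac{1}{N}\mathcal{S}^{N/2}$ (inherited from Lemma \ref{2207091943} and Lemma \ref{2301312159}) excludes the bubbling of a Talenti profile.

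First I would extract Lagrange multipliers. Since $\{u_n\}\subset S_a$ is a Palais–Smale sequence, there exist $\lambda_n\in\mathbb{R}$ such that
\begin{equation*}
-\Delta u_n+V(x)u_n-\lambda_n u_n-\mu|u_n|^{p-2}u_n-|u_n|^{2^*-2}u_n=o_n(1)\;\;\text{in } H^{-1}(\mathbb{R}^N).
\end{equation*}
Testing against $u_n$, using the boundedness of $\{u_n\}$ in $H^1(\mathbb{R}^N)$ from Lemma \ref{22072315}, $(V_1)$, and Sobolev/Gagliardo–Nirenberg inequalities, the sequence $\{\lambda_n\}$ is bounded, so up to a subsequence $\lambda_n\to\lambda$. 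Passing to the limit, the weak limit $u$ solves \eqref{22062610} with this $\lambda$. The sign $\lambda<0$ will be deduced at the end from Lemma \ref{22062613} applied either to $u$ (if nontrivial) or to the leading bubble; the relation $\widetilde{P}_{\mu,V}(u_n)\to 0$ (or $P_{\mu,V}(u_n)\to 0$), combined with $(V_4)$, propagates the sign to the limit equation \eqref{22071100}.

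Next, set $v_n^{(1)}:=u_n-u$. Since $V(x)\to 0$ as $|x|\to\infty$ and $v_n^{(1)}\rightharpoonup 0$, one has $\int_{\mathbb{R}^N}V(x)|v_n^{(1)}|^2\,\ud x\to 0$ and $\int_{\mathbb{R}^N}V(x)v_n^{(1)}u\,\ud x\to 0$ (a Rellich-type argument: split $\mathbb{R}^N=B_R\cup B_R^c$, use $V\in L_{loc}^{N/2}$ and Sobolev on the inside, and $V\to 0$ outside). Brezis–Lieb for the $L^p$, $L^{2^*}$ and $\|\nabla\cdot\|_2$ terms then yields
\begin{equation*}
J_{\mu,V}(u_n)=J_{\mu,V}(u)+J_\mu(v_n^{(1)})+o_n(1),\;\; \|u_n\|_2^2=\|u\|_2^2+\|v_n^{(1)}\|_2^2+o_n(1),
\end{equation*}
and analogous splittings for the Pohozaev quantities. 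Hence $\{v_n^{(1)}\}$ is a free Palais–Smale sequence for $J_\mu$ on $H^1(\mathbb{R}^N)$ with $P_\mu(v_n^{(1)})\to 0$ and Lagrange multiplier $\lambda$.

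Then I would apply a Lions-type dichotomy to $\{v_n^{(1)}\}$. Suppose $v_n^{(1)}\to 0$ in $L^p(\mathbb{R}^N)$ for $\hat p\le p<2^*$; from $P_\mu(v_n^{(1)})\to 0$ we get $\|\nabla v_n^{(1)}\|_2^2-\|v_n^{(1)}\|_{2^*}^{2^*}\to 0$, so by the Sobolev inequality either $\|\nabla v_n^{(1)}\|_2\to 0$ (strong $H^1$-convergence, hence we are done) or $\|\nabla v_n^{(1)}\|_2^2\to\ell\ge\mathcal{S}^{N/2}$. The second option contributes $J_\mu(v_n^{(1)})\to\tfrac{\ell}{N}\ge\tfrac{1}{N}\mathcal{S}^{N/2}$, which together with $J_{\mu,V}(u)\ge 0$ from Lemmas \ref{22072415}--\ref{2207241537} contradicts $m_{V,a}<\tfrac{1}{N}\mathcal{S}^{N/2}$. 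Therefore nonvanishing occurs: there is $\delta>0$ and $y_n^1\in\mathbb{R}^N$ with $\int_{B_1(y_n^1)}|v_n^{(1)}|^p\,\ud x\ge\delta$. Since $v_n^{(1)}\rightharpoonup 0$ in $H^1(\mathbb{R}^N)$, $|y_n^1|\to\infty$, and $v_n^{(1)}(\cdot+y_n^1)\rightharpoonup w^1\not\equiv 0$. Passage to the limit in the translated equation, using $V(\cdot+y_n^1)\to 0$ pointwise a.e., gives that $w^1$ solves \eqref{22071100} with the same $\lambda$.

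Finally, I iterate with $v_n^{(2)}:=v_n^{(1)}-w^1(\cdot-y_n^1)$, noting that $|y_n^2-y_n^1|\to\infty$ follows from $v_n^{(2)}(\cdot+y_n^1)\rightharpoonup 0$. The procedure terminates in finitely many steps because every nontrivial solution of \eqref{22071100} with the common $\lambda$ (in a fixed bounded interval) satisfies a uniform lower bound $\|w^j\|_2^2\ge c_0>0$ obtained from its Nehari and Pohozaev identities combined with Gagliardo–Nirenberg and Sobolev inequalities, whereas $\sum_j\|w^j\|_2^2\le a^2$. The decomposition \eqref{2207110010} and the identities in \eqref{2207110009} then follow from repeated Brezis–Lieb bookkeeping. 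The main obstacle is the third step: ruling out the "vanishing in $L^p$ but concentrating in $L^{2^*}$" scenario requires precisely the strict gap between $m_{V,a}$ and the Talenti energy $\tfrac{1}{N}\mathcal{S}^{N/2}$, which is why the construction in Section \ref{2306251339} anchors the mountain pass endpoints on the fiber of the non-potential ground state $v^a$, and why the nonnegativity of $J_{\mu,V}$ along solutions (Lemmas \ref{22072415}, \ref{2207241537}) is indispensable.
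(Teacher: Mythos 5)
Your overall strategy coincides with the paper's: a Brezis--Lieb splitting of $J_{\mu,V}$ and of the Pohozaev functional, a Lions dichotomy applied to $v_n^{(1)}=u_n-u$, and exclusion of a Talenti bubble through $J_{\mu,V}(u)\ge 0$ together with $m_{V,a}<m_a<\frac1N\mathcal{S}^{N/2}$. However, there is a genuine gap in the vanishing branch of your dichotomy. After excluding $\ell\ge\mathcal{S}^{N/2}$, you pass from $\|\nabla v_n^{(1)}\|_2\to 0$ directly to ``strong $H^1$-convergence, hence we are done.'' That implication is false as stated: $\|\nabla v_n^{(1)}\|_2\to 0$ together with $v_n^{(1)}\rightharpoonup 0$ does not force $\|v_n^{(1)}\|_2\to 0$ (consider $n^{-N/2}\phi(x/n)$, whose $L^2$ norm is constant while its gradient norm vanishes). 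Recovering the $L^2$ convergence requires testing the difference of the equations for $u_n$ and $u$ against $v_n^{(1)}$ to get $\lambda\|v_n^{(1)}\|_2^2\to 0$ and then invoking $\lambda<0$. This is exactly where your plan to deduce the sign of $\lambda$ ``at the end,'' from $u$ if nontrivial or else from the leading bubble, becomes circular: in this branch there is no bubble, and Lemma \ref{22062613} yields $\lambda<0$ only when $u\neq 0$. The paper closes the circle by proving $u\neq 0$ \emph{before} the dichotomy: if $u=0$, the decay of $V$ makes $\{u_n\}$ a sequence with $P_\mu(u_n)\to 0$ and $J_{\mu,V}(u_n)=J_\mu(u_n)+o_n(1)$, with $\|\nabla u_n\|_2^2\ge c>0$ (otherwise $m_{V,a}=0$, contradicting $m_{V,a}\ge\beta>0$), so Lemma \ref{22070919} gives $m_{V,a}\ge m_a$, contradicting the strict inequality $m_{V,a}<m_a$ of Lemma \ref{2207091943}. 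Neither Lemma \ref{22070919} nor this step appears in your proposal, and without it the argument does not close.

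A secondary weakness is your termination criterion via a uniform lower bound $\|w^j\|_2^2\ge c_0>0$. Combining the Nehari and Pohozaev identities of \eqref{22071100} gives $-\lambda\|w\|_2^2=\mu(1-\gamma_p)\|w\|_p^p$, and inserting Gagliardo--Nirenberg produces an \emph{upper} bound for $\|w\|_2$ in terms of $\|\nabla w\|_2$, not a lower one; a uniform lower bound on the mass of nontrivial solutions is not obvious in the Sobolev critical setting, where solutions near the Aubin--Talenti profile can carry little $L^p$ and $L^2$ mass. The paper terminates the iteration by energy instead: by the monotonicity of $a\mapsto m_a$ (Lemma \ref{2207092032}) each bubble satisfies $J_\mu(w^j)\ge m_a>0$, while $J_\mu$ is uniformly bounded on the bounded set containing all the $w^j$, so only finitely many bubbles can occur. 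You should replace the mass lower bound by this energy lower bound, which is in any case the estimate needed to derive the final contradiction in the proof of Theorem \ref{22071658}.
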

\begin{proof}
In this proof, we argue up to suitable subsequences.
Since $\{u_n\}$ is a Palais-Smale sequence of $J_{\mu, V}|_{S_a}$ at level ${m}_{V,\, a}$, there exists a sequence $\{\lambda_n\}\subset \mathbb{R}$ such that
\begin{equation}\label{22071020}
    -\Delta u_n+V(x)u_n-\mu |u_n|^{p-2}u_n-|u_n|^{2^{*}-2}u_n-\lambda_n u_n\rightarrow 0 ~\text{in} ~H^{-1}(\mathbb{R}^N),
\end{equation}
so
\begin{equation}
\lambda_n a^2=\|\nabla u_n\|_2^2+\int_{\mathbb{R}^N} V(x)u_n^2 \ud x-\mu \|u_n\|_p^p-\|u_n\|_{2^*}^{2^*}+o_n(1).
\end{equation}
It follows from Lemma \ref{22072315} that $\{u_n\}$ is bounded in $H^1(\mathbb{R}^N)$, so $\{\lambda_n\}$ is also a bounded sequence. We can assume $\lambda_n\rightarrow \lambda \in \mathbb{R}$. Because $u_n \rightharpoonup u$ weakly in $H^1(\mathbb{R}^N)$, $u$ is a solution to \eqref{22062610}.
Therefore, Lemma \ref{22062613} leads to either $\lambda<0$ or $u=0$. 

\medskip\noindent
{\bf Case (i)}. Suppose that $u=0$. Because $\{u^2_n\}$ is bounded in $L^{\frac{N}{N-2}}(\mathbb{R}^N)$ and $u^2_n(x)\rightarrow 0$ for a.e. $x\in \mathbb{R}^N$, we obtain that $u^2_n\rightharpoonup 0$ weakly in $L^{\frac{N}{N-2}}(\mathbb{R}^N)$. For any $R>0$, we put
 \begin{equation}
\int_{\mathbb{R}^N}V(x)u_n^2 \ud x =\int_{B_R(0)}V(x)u_n^2 \ud x + \int_{\mathbb{R}^N\backslash B_R(0)}V(x)u_n^2 \ud x.
\end{equation}
From $V\in L^{\frac{N}{2}}_{loc}(\mathbb{R}^N)$ and $\lim\limits_{|x|\rightarrow\infty} V(x)=0$ and taking $R$ large enough, it holds that
\begin{equation}\label{2307022355}
\int_{\mathbb{R}^N}V(x)u_n^2 \ud x=o_n(1).
\end{equation}
Similarly, it follows from $(V_3)$ that
\begin{equation}\label{2302032102}
\int_{\mathbb{R}^N}V(x)u_n\nabla u_n\cdot x \ud x=o_n(1).
\end{equation}
Hence
$${m}_{V,\, a}=J_{\mu, V}(u_n)+o_n(1)=\frac12 \|\nabla u_n\|_2^2-\frac{\mu}{p}\|u_n\|_p^p-\frac{1}{2^*}\|u_n\|_{2^*}^{2^*}+o_n(1),$$
and
\begin{equation}\label{23020321}
o_n(1)=P_{\mu, V}(u_n)=\|\nabla u_n\|_2^2-\mu\gamma_p\|u_n\|_p^p-\|u_n\|_{2^*}^{2^*}+o_n(1).
\end{equation}
So $P_{\mu}(u_n)\rightarrow 0$. If $\|\nabla u_n\|_2^2\to 0$, then $\|u_n\|_p^p\to 0$ and $\|u_n\|_{2^*}^{2^*}\to 0$. Hence ${m}_{V,\, a}=0$, that contradicts \eqref{2209120004}. As a result, up to a subsequence, we can assume that $\|\nabla u_n\|_2^2>c$ for some $c>0$.
 By Lemma \ref{22070919}, we conclude
$${m}_{V,\, a}=\underset{n\rightarrow \infty}{\lim}J_{\mu}(u_n)\ge m_a,$$
which contradicts Lemma \ref{2207091943}.\par
From the above, we know that $u\neq 0$ and $\lambda<0$. Letting $w^1_n:=u_n-u$, we obtain that $w^1_n\rightharpoonup 0$ weakly in $H^1(\mathbb{R}^N)$, $w^1_n\rightarrow 0$ strongly in $L^2_{loc}(\mathbb{R}^N)$, $L^p_{loc}(\mathbb{R}^N)$, and a.e. in $\mathbb{R}^N$. Set
\[
L:=\underset{n\rightarrow\infty}{\lim\inf}\underset{y\in \mathbb{R}^n}{\sup}\int_{B_1(y)}|w^1_n|^2 \ud x.
\]
If $L=0$, up to a subsequence, it follows from Lions' lemma (see \cite[Theorem 1.34]{Willem96}) that $\|w_n^1\|_p^p\rightarrow 0$. By Brezis-Lieb lemma (see \cite[Lemma 1.32]{Willem96}), we obtain
\[
\begin{aligned}
{m}_{V,\, a} &=J_{\mu, V}(u_n)+o_n(1)\\
&=J_{\mu, V}(u)+J_{\mu}(w_n^1)+o_n(1)\\
&=J_{\mu, V}(u)+\frac12 \|\nabla w^1_n\|_2^2-\frac{1}{2^*}\|w_n^1\|_{2^*}^{2^*}+o_n(1).
\end{aligned}
\]
In virtue of the boundedness of $\{w^1_n\}$ in $H^1(\mathbb{R}^N)$, we can assume that $\|\nabla w^1_n\|_2^2\rightarrow l\in\mathbb{R}$. By $P_{\mu, V}(u)=0$, $P_{\mu, V}(u_n)=o_n(1)$, Brezis-Lieb lemma and similar arguments in \eqref{2307022355}, \eqref{2302032102}, we get
\begin{equation*}
\begin{aligned}
  o_n&(1)=P_{\mu, V}(u_n)-P_{\mu, V}(u)\\
  &=\|\nabla w_n^1\|_2^2
+\frac{N}{2}\int_{\mathbb{R}^N}V(x)(w_n^1)^2 \ud x +\int_{\mathbb{R}^N}V(x)w_n^1\nabla w_n^1\cdot x \ud x-\mu\gamma_p\|w^1_n\|_p^p
-\|w^1_n\|_{2^*}^{2^*}+o_n(1)\\
&=\|\nabla w_n^1\|_2^2-\|w^1_n\|_{2^*}^{2^*}+o_n(1).
\end{aligned}
\end{equation*}
Thus
\begin{equation*}
l=\|\nabla w^1_n\|_2^2+o_n(1)\ge \mathcal{S}\|w^1_n\|_{2^*}^{2}+o_n(1)=\mathcal{S}l^{\frac{2}{2^*}}+o_n(1),
\end{equation*}
which implies either $l=0$ or $l\ge\mathcal{S}^{\frac{N}{2}}$.
However, if $l\ge \mathcal{S}^{\frac{N}{2}}$, using Lemma \ref{22072415}, then
$${m}_{V,\, a}=J_{\mu, V}(u)+\frac12 \|\nabla w^1_n\|_2^2-\frac{1}{2^*}\|w^1_n\|_{2^*}^{2^*}+o_n(1)\ge \frac{1}{N}\mathcal{S}^{\frac{N}{2}}+o_n(1),$$ which contradicts the fact that ${m}_{V,\, a} <m_a<\frac{1}{N}\mathcal{S}^{\frac{N}{2}}$ by Lemma \ref{2301312159} and Lemma \ref{2207091943}. Consequently, $l=0$ and we deduce that $\|\nabla w^1_n\|_2\rightarrow 0$. Multiplying \eqref{22071020}, \eqref{22062610} by $w^1_n$, we have
\[
\begin{aligned}
\|\nabla w^1_n\|_2^2+\int_{\mathbb{R}^N}V(x)|w^1_n|^2 \ud x=&\int_{\mathbb{R}^N}(\lambda_n u_n- \lambda u)w^1_n \ud x+\mu\int_{\mathbb{R}^N}(|u_n|^{p-2}u_n- |u|^{p-2}u)w^1_n \ud x\\
&+\int_{\mathbb{R}^N}(|u_n|^{2^*-2}u_n- |u|^{2^*-2}u)w^1_n \ud x+o_n(1).
\end{aligned}
\]
By $\|\nabla w^1_n\|_2\rightarrow 0$, we obtain
\[
\begin{aligned}
\lambda \int_{\mathbb{R}^N}(u_n-u)^2 \ud x \rightarrow 0.
\end{aligned}
\]
From $\lambda<0$, it follows that $u_n\rightarrow u$ in $H^1(\mathbb{R}^N)$.
A contradiction gives $L>0$. As a consequence, there exists a sequence $\{y_n^1\}\subset \mathbb{R}^N$ such that $|y_n^1|\to \infty$ and
\[
\int_{B_1(y^1_n)}|w^1_n|^2 \ud x \ge \frac{L}{2}.
\]
Letting $u^1_n(\cdot):=w^1_n(\cdot+y^1_n)$, there exists some $w^1\in H^1(\mathbb{R}^N)\backslash \{0\}$
such that $u^1_n\rightharpoonup w^1$ weakly in $H^1(\mathbb{R}^N)$ and $u^1_n(x)\rightharpoonup w^1(x)$ a.e. in $\mathbb{R}^N$. Setting $w^2_n:=u^1_n-w^1$, by Brezis-Lieb lemma again, we have
\[
\begin{aligned}
{m}_{V,\, a} &=J_{\mu, V}(u_n)+o_n(1)\\
&=J_{\mu, V}(u)+J_{\mu}(w_n^1)+o_n(1)\\
&=J_{\mu, V}(u)+J_{\mu}(w^1)+J_{\mu}(w^2_n)+o_n(1).
\end{aligned}
\]
It is clear that there exists some $C>0$ such that $|J_{\mu}(v)|\le C$ for any
$$v\in \{v\in H^1(\mathbb{R}^N): \|v\|_2\le a,\, \|\nabla v\|_2\le \underset{n}{\sup} \|\nabla u_n\|_2\}.$$
Using Lemma \ref{2207092032}, we get $J_{\mu}(w^j)\ge m_a$. By induction, we deduce that this process will terminate after a finite of times. Consequently, there exists some $k\in \mathbb{N}^+$ and $k$ nontrivial solutions $w^1,\dots, w^k$ to \eqref{22071100} and $k$ sequences $\{y_n^j\}\subset\mathbb{R}^N$ such that \eqref{2207110010}, \eqref{2207110009} hold.

\medskip\noindent
{\bf Case (ii)}. We only point out some differences to the above proof. First, by $(V_2)$, \eqref{2302032102} can be replaced by
\begin{equation*}
\int_{\mathbb{R}^N}W(x)u_n^2 \ud x=o_n(1),
\end{equation*}
and therefore \eqref{23020321} becomes
\[
o_n(1)=\widetilde{P}_{\mu, V}(u_n)=\|\nabla u_n\|_2^2-\mu\gamma_p\|u_n\|_p^p-\|u_n\|_{2^*}^{2^*}+o_n(1).
\]
Replace $P_{\mu, V}$ by $\widetilde{P}_{\mu, V}$. Similarly as the proof of Case (i), one can complete the proof of Case (ii).
\end{proof}

\begin{proof}[Proof of Theorem \ref{22071658} completed]
According to the assumptions of Theorem \ref{22071658}, when $p=\hat{p}$, if the mass $a$ is small enough, then the conditions of Lemma \ref{22072415} (i), Lemma \ref{2207091943}, Lemma \ref{22072020} (i), Lemma \ref{22072315} (i) and Lemma \ref{2207110022} (i) will be satisfied. While for $p\in (\hat{p}, 2^*)$, the mass $a$ is not required to be small. \par
By Lemma \ref{22072020}, there exists a Palais-Smale sequence $\{u_n\}$ of $J_{\mu, V}|_{S_a}$ at level ${m}_{V,\, a}$, which satisfies \eqref{22072019}. Lemma \ref{22072315} leads to the boundedness of $\{u_n\}$ in $H^1(\mathbb{R}^N)$. Up to a subsequence, we can assume that there is some $u\in H^1 (\mathbb{R}^N)$ such that $u_n \rightharpoonup u$ weakly in $H^1(\mathbb{R}^N)$, and a.e. in $\mathbb{R}^N$. If this convergence is not strong, then by Lemma \ref{2207110022}, there exist $\lambda<0,\, k\in \mathbb{N}^+$ and $k$ nontrivial solutions $w^1,\dots, w^k$ of \eqref{22071100} such that \eqref{2207110009} holds. By Lemma \ref{22072415}, we have $J_{\mu, V}(u)\ge 0$. Furthermore, it follows from Lemma \ref{2207092032} and Lemma \ref{2207091943} that $J_{\mu}(w^j)\ge m_a >m_{V,\, a}$, which contradicts \eqref{2207110009}. Thus $u_n\rightarrow u$ strongly in $H^1(\mathbb{R}^N)$.\par
In the sequel, we shall prove $u>0$. In virtue of $V\in C^1(\mathbb{R}^N)$,  we can obtain from Remark \ref{2302161844} that $u$ is a classical solution. Due to $\|u_n^-\|_2\to 0$, it follows that $u\ge 0$. By the strong maximum principle, we get $u>0$ in $\mathbb{R}^N$.
\end{proof}
\begin{proof}[Proof of Theorem \ref{2207241248} completed] This proof is very similar to the proof of Theorem \ref{22071658} where we need to use Lemma \ref{22072415} instead of Lemma \ref{2207241537}.
\end{proof}

\appendix\section{Appendix}
Here we will show the differential property for the potential term of $J_{\mu,\, V}$. In addition, we will give some Pohozaev identities for \eqref{22062610}, that is, any $H^1$ solution to \eqref{22062610} will satisfy \eqref{2302071001}. 

\begin{lemma}\label{2301272023}
Assume that $N\ge 1$. We have the following assertions:
\begin{itemize}
\item[$(i)$] If the potential $V$ satisfies \eqref{2301271916}, then for $u\in H^1(\mathbb{R}^N)$,
\begin{equation}\label{23012722}
\frac{d}{dt} \int_{\mathbb{R}^N} V(x)u^2(tx) \ud x\Big|_{t=1}=2\int_{\mathbb{R}^N} V(x)u(x)\nabla u(x)\cdot x \, \ud x.
\end{equation}\par
\item[$(ii)$] If the potential $V\in C^1(\mathbb{R}^N)$ satisfies \eqref{2301262110} and \eqref{2301271518}, then for $u\in H^1(\mathbb{R}^N)$,
\begin{equation}\label{2301272258}
\frac{d}{dt} \int_{\mathbb{R}^N} V\left(\frac{x}{t}\right)u^2 \ud x\Big|_{t=1}=-2\int_{\mathbb{R}^N} W(x)u^2 \ud x.
\end{equation}
\end{itemize}
\end{lemma}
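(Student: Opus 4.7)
} The plan is to prove both identities by justified differentiation under the integral sign: first verify them for $u\in C_c^\infty(\mathbb{R}^N)$, where smoothness makes everything classical, and then extend to $u\in H^1(\mathbb{R}^N)$ by a density argument in which the weighted bounds \eqref{2301271916}, \eqref{2301262110}, \eqref{2301271518} furnish the uniform control required to pass to the limit.

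For part $(i)$, fix $u\in C_c^\infty$. Since $u^2(tx)$ is smooth with bounded $x$-support and $V\in L^{N/2}_{loc}$, Fubini together with the fundamental theorem of calculus yields the integral representation
\[
\phi(t)-\phi(1)=\int_1^t g(s)\,ds,\qquad g(s):=2\int_{\mathbb{R}^N} V(x)\, u(sx)\,\nabla u(sx)\cdot x\,\ud x,
\]
where $\phi(t):=\int V(x)u^2(tx)\,\ud x$. Continuity of $g$ at $s=1$ then gives \eqref{23012722}. To extend this to $u\in H^1$, approximate $u$ by $u_n\in C_c^\infty$ in $H^1$ and check that both $\phi$ and $g$ depend continuously on $u$. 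Cauchy--Schwarz and \eqref{2301271916} (with $V_\infty=0$) bound $g$ by $2\sigma_3 \|\nabla u\|_2^2$, from which continuity in $u$ follows by polarisation. For $\phi$, the substitution $y=tx$ turns $\phi(t)$ into $t^{-N}\int V(y/t)u^2(y)\,\ud y$; the rescaled version of \eqref{2301262110} (itself a consequence of \eqref{2301271916} by Remark \ref{2304061410}) gives
\[
\int_{\mathbb{R}^N} |V(y/t)|\,w^2(y)\,\ud y \le \sigma_1 t^2\|\nabla w\|_2^2,
\]
uniformly for $t$ in a compact neighbourhood of $1$, yielding the needed continuity. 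Passing $u_n\to u$ in the integral representation and differentiating at $t=1$ yields \eqref{23012722}.

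For part $(ii)$, $V\in C^1$ makes $t\mapsto V(x/t)$ differentiable with $\frac{d}{dt}V(x/t)=-t^{-2}\nabla V(x/t)\cdot x=-\tfrac{2}{t}W(x/t)$, using $\nabla V(y)\cdot y=2W(y)$. Repeating the structure of $(i)$, for $u\in C_c^\infty$ the integrand is compactly supported and $\nabla V$ is continuous on that support, so dominated convergence gives
\[
\psi(t)-\psi(1)=-\int_1^t \frac{2}{s}\int_{\mathbb{R}^N} W(x/s)\,u^2\,\ud x\,\ud s,\qquad \psi(t):=\int_{\mathbb{R}^N} V(x/t)u^2\,\ud x.
\]
For the density step, applying \eqref{2301262110} and \eqref{2301271518} after the same rescaling trick gives $\int |V(x/t)|u^2\,\ud x\le \sigma_1 t^2\|\nabla u\|_2^2$ and $\int |W(x/s)|u^2\,\ud x\le \widetilde\sigma_2(\|u\|_2^2+s^2\|\nabla u\|_2^2)$, both uniform for $s,t$ in a compact neighbourhood of $1$. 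Continuity of $s\mapsto \int W(x/s)u^2\,\ud x$ at $s=1$ combined with the integral identity then delivers \eqref{2301272258}.

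The main technical obstacle is precisely this last continuity statement, since $W$ need not be globally bounded and $u\in H^1$ need not be compactly supported, so naive dominated convergence fails. I would resolve it by the same density device: continuity is transparent when $u\in C_c^\infty$, and the uniform estimate from \eqref{2301271518} applied to the rescaled functions $u(s\,\cdot)$ allows one to interchange the limit $s\to 1$ with the approximation limit $u_n\to u$ in $H^1$.
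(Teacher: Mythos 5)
Your proposal follows essentially the same route as the paper: verify the identity for $u\in C_c^\infty(\mathbb{R}^N)$ via Fubini and the fundamental theorem of calculus, establish continuity of $s\mapsto g(s)$ at $s=1$ using the weighted bounds and the strong continuity of dilations in $L^2$, and pass to general $u\in H^1(\mathbb{R}^N)$ by density with the inequalities \eqref{2301271916}, \eqref{2301262110}, \eqref{2301271518} providing the uniform control. The only small imprecision is writing $\int |W(x/s)|u^2\,\ud x$ on the left of your bound, whereas \eqref{2301271518} only controls $\bigl|\int W(x/s)u^2\,\ud x\bigr|$ (and its bilinear version by polarisation), which is all the argument actually requires.
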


\begin{proof}%[Proof of Proposition \ref{2301272023}]
We only prove (i), because (ii) is similar. From Remak \ref{2304061410}, \eqref{2301262110} holds and $\sigma_1$ can be chosen to be $\frac{2}{N-2}\sigma_3$. If $u\in C_c^{\infty}(\mathbb{R}^N)$, there holds
\begin{equation}\label{23012718}
\begin{aligned}
\quad \int_{\mathbb{R}^N} V(x)u^2(tx) \ud x-\int_{\mathbb{R}^N} V(x)u^2(x) \ud x & = \int_{\mathbb{R}^N}\int_1^t 2V(x)u(sx)\nabla u(sx)\cdot x \,\ud s \ud x\\
& = \int_1^t\int_{\mathbb{R}^N} 2V(x)u(sx)\nabla u(sx)\cdot x \, \ud x\ud s.\\
\end{aligned}
\end{equation}
Since $C_c^{\infty}(\mathbb{R}^N)$ is dense in $H^1(\mathbb{R}^N)$, for $u\in H^1(\mathbb{R}^N)$, there exists a sequence $\{u_n\}\subset C_c^{\infty}(\mathbb{R}^N)$ such that $u_n\to u$ in $H^1(\mathbb{R}^N)$. By \eqref{2301262110} and H\"{o}lder inequality, we get
\[
\begin{aligned}
\left|\int_{\mathbb{R}^N} V(x)u_n^2 \ud x-\int_{\mathbb{R}^N} V(x)u^2\ud x\right| &\le \left|\int_{\mathbb{R}^N} V(x)(u_n-u)^2\ud x\right|+2\left|\int_{\mathbb{R}^N} V(x)(u_n-u)u\ud x\right|\\
& \le C\sigma_3\|\nabla (u_n-u)\|_2^2+2C\sigma_3\|\nabla (u_n-u)\|_2\|\nabla u\|_2.\\
\end{aligned}
\]
On the other hand, for $s\in (0, \infty)$, by \eqref{2301271916} and H\"{o}lder inequality, we have
\[
\begin{aligned}
&\quad \left|\int_{\mathbb{R}^N} V(x)u_n(sx)\nabla u_n(sx)\cdot x \ud x-\int_{\mathbb{R}^N} V(x)u(sx)\nabla u(sx)\cdot x \, \ud x\right|\\
& \le \left|\int_{\mathbb{R}^N} V(x)u_n(sx)(\nabla u_n(sx)-\nabla u(sx))\cdot x \, \ud x\right|+\left|\int_{\mathbb{R}^N} V(x)(u_n(sx)-u(sx))\nabla u(sx)\cdot x \, \ud x\right|\\
& \le {\sigma_3} s^{1-{N}}\|\nabla (u_n-u)\|_2\|\nabla u_n\|_2+{\sigma_3} s^{1-{N}}\|\nabla (u_n-u)\|_2\|\nabla u\|_2.
\end{aligned}
\]
Thus, \eqref{23012718} holds for $u\in H^1(\mathbb{R}^N)$ and $t\in (0, \infty)$. Next, we claim that
\begin{equation}\label{2301271802}
g(s):=\int_{\mathbb{R}^N} 2V(x)u(sx)\nabla u(sx)\cdot x \, \ud x
\end{equation}
is continuous for $s\in (0, \infty)$. Without loss of generality we only prove the continuity of $g$ at $s=1$. It is seen from \eqref{2301271916} that
\[
\begin{aligned}
|g(s)-g(1)|&\le 2\int_{\mathbb{R}^N} \left|V(x)u(sx)\nabla u(sx)\cdot x - V(x)u(x)\nabla u(x)\cdot x\right| \, \ud x\\
& \le 2\int_{\mathbb{R}^N} \left|V(x)u(sx)\nabla u(sx)\cdot x - V(x)u(sx)\nabla u(x)\cdot x\right| \, \ud x\\
&\quad +2\int_{\mathbb{R}^N} \left|V(x)u(sx)\nabla u(x)\cdot x - V(x)u(x)\nabla u(x)\cdot x\right| \, \ud x\\
& \le 2  \| \nabla u(sx)-\nabla u(x) \|_2  \| V(x)|x|u(sx) \|_2+2  \|\nabla u \|_2  \| V(x)|x|(u(sx)-u(x)) \|_2\\
& \le 2{\sigma_3} s^{{1-\frac{N}{2}}} \| \nabla u(sx)-\nabla u(x) \|_2 \|\nabla u\|_2+2  \|\nabla u \|_2  \| \nabla (u(sx)-u(x)) \|_2.\\
\end{aligned}
\]
It is clear from Brezis-Lieb lemma (see \cite[Lemma 1.32]{Willem96}) that
\[
\underset{s\to 1}{\lim} \| \nabla u(sx)-\nabla u(x) \|_2 = \underset{s\to 1}{\lim} \| \nabla(u(sx)- u(x)) \|_2= 0,
\]
So we get the claim. Finally, by \eqref{23012718} and \eqref{2301271802},
\[
\frac{d}{dt} \int_{\mathbb{R}^N} V(x)u^2(tx) \ud x\Big|_{t=1} =\underset{t\to 1}{\lim}\frac{1}{t-1}\int_1^t g(s) \ud s=g(1).
\]
The proof is completed. \end{proof}

Next, we prove the Pohozaev identities.
\begin{proposition}\label{2302171335}
Let $N\ge 3$, $p\in (2, 2^*)$. Then \eqref{2302071001} holds true for any $u \in H^1({\mathbb R}^N)$ solution to \eqref{22062610}.
\end{proposition}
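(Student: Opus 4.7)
The plan is to derive both identities by testing \eqref{22062610} against the classical Pohozaev multiplier $x\cdot\nabla u$. First I would establish enough regularity of the $H^1$-solution $u$: a standard Brezis-Kato bootstrap applied to \eqref{22062610}, using that $V\in L^{N/2}_{loc}(\mathbb{R}^N)$ holds in both cases and that $|u|^{2^{*}-2}u$ lies a priori in $L^{2N/(N+2)}$, yields $u\in L^{q}_{loc}$ for every $q<\infty$, and hence $u\in H^{2}_{loc}(\mathbb{R}^N)$ by elliptic regularity. This is enough to carry out the integrations by parts below.

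Next I would multiply \eqref{22062610} by $\chi_{R}(x)\,x\cdot\nabla u$, where $\chi_{R}\in C_{c}^{\infty}(\mathbb{R}^N)$ equals $1$ on $B_{R}$ and is supported in $B_{2R}$, integrate on $\mathbb{R}^N$, and let $R\to\infty$. The standard computations (with the leftover $\nabla\chi_{R}$ remainder handled by dominated convergence thanks to $u\in H^{1}\cap L^{p}\cap L^{2^{*}}$) produce
\begin{equation*}
-\frac{N-2}{2}\|\nabla u\|_{2}^{2}+\int_{\mathbb{R}^N}Vu\,(x\cdot\nabla u)\,\ud x=-\frac{N\lambda}{2}\|u\|_{2}^{2}-\frac{N\mu}{p}\|u\|_{p}^{p}-\frac{N}{2^{*}}\|u\|_{2^{*}}^{2^{*}}.
\end{equation*}
Combining this with the natural identity $\|\nabla u\|_{2}^{2}+\int Vu^{2}\,\ud x=\lambda\|u\|_{2}^{2}+\mu\|u\|_{p}^{p}+\|u\|_{2^{*}}^{2^{*}}$ obtained by testing \eqref{22062610} against $u$, I would eliminate $\lambda\|u\|_{2}^{2}$; the algebraic simplification based on $\tfrac{1}{2}-\tfrac{1}{2^{*}}=\tfrac{1}{N}$ and $\gamma_{p}=\tfrac{N(p-2)}{2p}$ yields exactly $P_{\mu,V}(u)=0$, which is the first case of \eqref{2302071001}.

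For Case (b), under the stronger assumption $V\in C^{1}$, I would additionally rewrite $Vu(x\cdot\nabla u)=\tfrac{1}{2}V\,x\cdot\nabla(u^{2})$ and integrate by parts on $B_{R}$, using $\nabla\cdot(Vx)=NV+x\cdot\nabla V=NV+2W$, to obtain
\begin{equation*}
\int_{\mathbb{R}^N}Vu(x\cdot\nabla u)\,\ud x=-\frac{N}{2}\int_{\mathbb{R}^N}Vu^{2}\,\ud x-\int_{\mathbb{R}^N}Wu^{2}\,\ud x.
\end{equation*}
Substituting this into $P_{\mu,V}(u)=0$ cancels the $\tfrac{N}{2}\int Vu^{2}\,\ud x$ terms and leaves exactly $\widetilde{P}_{\mu,V}(u)=0$.

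The main technical obstacle is giving rigorous meaning to the term $\int_{\mathbb{R}^N}Vu(x\cdot\nabla u)\,\ud x$ and justifying the limit $R\to\infty$ in each case. In Case (a), the mere $L^{N/2}_{loc}$ hypothesis on $V$ prevents one from shifting a derivative onto $V$, so one has to keep this mixed term as it stands and control it by \eqref{2301271916}: Cauchy-Schwarz yields $\int|Vu(x\cdot\nabla u)|\,\ud x\le\||V||x|u\|_{2}\|\nabla u\|_{2}\le\sigma_{3}\|\nabla u\|_{2}^{2}$, and a similar bound forces the boundary flux on $\partial B_{2R}$ to vanish along a suitable subsequence. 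In Case (b), the additional integration by parts requires $Vu^{2},Wu^{2}\in L^{1}$, furnished by \eqref{2301262110} and \eqref{2301271518} respectively, while the vanishing of the $\partial B_{R}$ boundary contribution again reduces to the integrability of $V|x|u^{2}$ at infinity. These structural differences between the two regularity regimes for $V$ are exactly what produces the two distinct forms \eqref{2306261440} and \eqref{2306261441} of the Pohozaev identity.
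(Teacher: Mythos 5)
Your proposal follows essentially the same route as the paper for Case (a): test \eqref{22062610} against $u$ to get the Nehari-type identity, test against a cut-off times $x\cdot\nabla u$, justify the passage to the limit using local $W^{2,q}$ regularity (the paper invokes \cite[B.3 Lemma]{Struwe2008} rather than a Brezis--Kato bootstrap, but the content is the same), control the mixed term $\int_{\mathbb{R}^N}Vu\,(x\cdot\nabla u)\,\ud x$ via \eqref{2301271916}, and conclude $P_{\mu,V}(u)=0$ by the same elimination of $\lambda$. The only genuine divergence is Case (b), where the paper simply cites \cite[Proposition 2.1]{LM2014} while you rederive $\widetilde{P}_{\mu,V}(u)=0$ by an additional integration by parts; that is a reasonable self-contained alternative. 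One caveat there: under the Case (b) hypotheses \eqref{2301262110} and \eqref{2301271518} alone, the global integral $\int_{\mathbb{R}^N}Vu\,(x\cdot\nabla u)\,\ud x$ need not be absolutely convergent, so you cannot literally first establish $P_{\mu,V}(u)=0$ and then ``substitute''; the rewriting $Vu(x\cdot\nabla u)=\tfrac12 V\,x\cdot\nabla(u^2)$ and the integration by parts must be carried out at the truncated level, before letting $R\to\infty$, after which only $Vu^2,\,Wu^2\in L^1$ are needed --- exactly as your final paragraph hints. With that ordering made explicit, the argument is complete.
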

\begin{proof}
Suppose that $u$ solves \eqref{22062610}. Use $u$ as a test function to \eqref{22062610}, there holds
\begin{equation}\label{2302171325}
\|\nabla u\|_2^2+\int_{\mathbb{R}^N}V(x)u^2 \ud x=\lambda \|u\|_2^2+\mu \|u\|_p^p+\|u\|_{2^*}^{2^*}.
\end{equation}
By \cite[Proposition 2.1]{LM2014}, we know that if $V\in C^1(\mathbb{R}^N)$ and \eqref{2301262110}, \eqref{2301271518} are satisfied, any solution $u \in H^1({\mathbb R}^N)$ to \eqref{22062610} will satisfy $\widetilde{P}_{\mu, V}(u)=0$. 

Now, it suffices to check that if \eqref{2301271916} is valid, any solution to \eqref{22062610} satisfies $P_{\mu, V}(u)=0$. We follow the proof of \cite[Proposition 2.1]{LM2014} and \cite[Theorem B.3]{Willem96}. Let $\eta\in C_c^{\infty}(\mathbb{R})$ be a cut-off function satisfying $0\le \eta \le 1$, $\eta(r)=1$ for $r\le 1$ and $\eta(r)=0$ for $r\ge 2$. Let
\[
\eta_n(x)=\eta\left(\frac{|x|^2}{n^2}\right), \quad \forall\; n \ge 1.
\]
Thus, $0 \le \eta_n\le 1$ and there exists $c>0$ such that
\[
|x||\nabla \eta_n(x)|\le c, \quad \forall \; x \in {\mathbb R}^N, n \ge 1.
\]
By \cite[B.3 Lemma]{Struwe2008}, $u\in W_{loc}^{2, q}(\mathbb{R}^N)\cap H^1(\mathbb{R}^N)$ for any $q\ge 1$, since $V\in L_{loc}^{\frac{N}{2}}(\mathbb{R}^N)$. Hence
\begin{equation}\label{2302171153}
\begin{aligned}
\eta_n\Delta u(x\cdot \nabla u) & = {\rm div}\left[(x\cdot \nabla u)\eta_n\nabla u - \frac{\eta_n|\nabla u|^2}{2}x\right]\\
&\quad - (x\cdot \nabla u)(\nabla u\cdot\nabla \eta_n) + \frac{N-2}{2}\eta_n|\nabla u|^2+ \frac{|\nabla u|^2}{2}\nabla \eta_n \cdot x.
\end{aligned}
\end{equation}
Let
\[
g(x, u):=-V(x)u+f(u),
\]
with $f(u):=\lambda u+\mu |u|^{p-2}u+|u|^{2^{*}-2}u$. Therefore
\begin{equation}\label{2302171154}
g(x, u)\eta_nx\cdot \nabla u = -V(x)u\eta_nx\cdot \nabla u+ {\rm div}\big[\eta_n xF(u)\big] -N\eta_nF(u)- F(u)\nabla \eta_n\cdot x,
\end{equation}
where
\[
F(u)=\int_0^uf(t) \ud t=\frac{\lambda}{2}|u|_2^2+\frac{\mu}{p} |u|_p^p+\frac{1}{2^*}|u|_{2^*}^{2^*}.
\]
By \eqref{2302171153} and \eqref{2302171154}, under \eqref{2301271916}, applying divergence theorem and Lebesgue's dominated convergence theorem, taking $n$ tend to $\infty$, we arrive at
\begin{equation}\label{2302171324}
\frac{N-2}{2}\|\nabla u\|_2^2=\int_{\mathbb{R}^N}V(x)ux\cdot \nabla u \ud x+\frac{N\lambda}{2}\|u\|_2^2+\frac{N\mu}{p} \|u\|_p^p+\frac{N}{2^*}\|u\|_{2^*}^{2^*}.
\end{equation}
By \eqref{2302171325} and \eqref{2302171324}, we get $P_{\mu, V}(u)=0$.
\end{proof}

\end{document}